\documentclass[pdflatex,sn-mathphys-num]{sn-jnl}

\usepackage[T1]{fontenc}
\usepackage{mathtools}
\usepackage{amssymb,amsfonts}
\usepackage{amsthm}
\usepackage{enumitem}
\usepackage{textcomp}

\setcitestyle{numbers,square,sort&compress}
\hypersetup{
  hidelinks,
  pdftitle={Convex order under majorization for projections of ell-p balls},
  pdfauthor={Soufiane Fafe},
  pdfkeywords={ell-p balls, convex order, majorization, stochastic orders, hyperplane sections, Gaussian convolution}
}

\numberwithin{equation}{section}
\allowdisplaybreaks

\theoremstyle{thmstyleone}
\newtheorem{theorem}{Theorem}[section]
\newtheorem{proposition}[theorem]{Proposition}
\newtheorem{lemma}[theorem]{Lemma}
\newtheorem{corollary}[theorem]{Corollary}

\theoremstyle{thmstylethree}

\newtheorem{remark}[theorem]{Remark}

\newcommand{\R}{\mathbb{R}}
\newcommand{\E}{\mathbb{E}}
\DeclareMathOperator{\Var}{Var}
\DeclareMathOperator{\vol}{vol}
\newcommand{\1}{\mathbf{1}}
\newcommand{\dd}{\,\mathrm{d}}
\newcommand{\eps}{\varepsilon}
\newcommand{\cx}{\preceq_{\mathrm{cx}}}
\newcommand{\sphere}{S^{n-1}}
\newcommand{\diag}{u_{\mathrm{diag}}}
\newcommand{\sqvec}[1]{#1^{\odot 2}}

\begin{document}

\title[Convex order under majorization]
 {Convex order under majorization for projections of $\ell_p^n$-balls}

\author*[1]{\fnm{Soufiane} \sur{Fafe}}\email{sf3159@columbia.edu}

\affil*[1]{\orgdiv{Department of Industrial Engineering and Operations Research},
 \orgname{Columbia University},
 \orgaddress{\street{500 W. 120th Street \#315}, \city{New York},
 \state{NY}, \postcode{10027}, \country{USA}}}

\abstract{Let $n\ge1$ and $1\le p\le2$, and let $X$ be uniformly distributed on the unit ball $B_p^n$ of $\ell_p^n$. We prove that if $(a_1^2,\ldots,a_n^2)$ majorizes $(b_1^2,\ldots,b_n^2)$, then $\langle b,X\rangle^2$ is dominated by $\langle a,X\rangle^2$ in convex order. For $p<2$ this gives a nontrivial comparison of the complete squared-projection laws, whereas at $p=2$ it reduces to equality by rotational invariance. The result strengthens known majorization inequalities for individual power moments and provides a single distributional source for several familiar projection inequalities. Its proof combines a two-dimensional hinge-function comparison with a preservation theorem for Beta--Rademacher perturbations, which lifts elementary majorization transfers through the Dirichlet representation of the uniform measure on $B_p^n$. The comparison is stable under a common independent radial scaling and therefore extends to $\ell_p$-radial measures with nonincreasing radial density. Consequences include simultaneous inequalities for moments, hinge functions, and Laplace transforms, together with Schur-convexity of Gaussian-regularized central-section densities. The range is sharp in the stated direction: for $p>2$, a support obstruction already occurs between a coordinate direction and a two-coordinate direction.}

\keywords{$\ell_p$-balls, convex order, majorization, stochastic orders, hyperplane sections, Gaussian convolution}

\pacs[MSC Classification]{60E15, 52A40, 52A20, 52A21}

\maketitle

\section{Introduction}\label{sec:introduction}

Let $n\ge1$. For $p>0$, set
\[
 B_p^n=\left\{x\in\R^n:\sum_{i=1}^n |x_i|^p\le1\right\},
\]
and let $X$ be uniformly distributed on $B_p^n$. For
$a=(a_1,\ldots,a_n)\in\R^n$, write
\[
 \sqvec{a}=(a_1^2,\ldots,a_n^2).
\]
If $u,v\in\R^n$ have the same coordinate sum, we write $u\succ v$ when
$u$ majorizes $v$, that is, when the partial sums of the decreasing
rearrangement of $u$ dominate those of $v$. Because the law of $X$ is
unconditional and invariant under coordinate permutations, majorization
of $\sqvec a$ is the natural order for measuring how concentrated the
Euclidean mass of the coefficient vector is among its coordinates.

Our main result orders the entire squared-projection laws in convex
order. Recall that, for integrable random variables $U,V$, the relation
$U\cx V$ means
\[
 \E\Phi(U)\le\E\Phi(V)
\]
for every convex function $\Phi$ for which the expectations exist.

\begin{theorem}[Convex order for squared projections]\label{thm:convex-order-main}
Let $n\ge1$ and $1\le p\le2$, let $X$ be uniformly distributed on
$B_p^n$, and let $a,b\in\R^n$. If
\[
 \sqvec a\succ\sqvec b,
\]
then
\begin{equation}\label{eq:convex-order-main}
 \langle b,X\rangle^2\cx\langle a,X\rangle^2.
\end{equation}
\end{theorem}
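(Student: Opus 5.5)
We reduce the statement to elementary two-coordinate moves and then prove a single two-dimensional comparison.

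\textbf{Reduction to T-transforms.} Convex order is transitive. By the classical Hardy--Littlewood--P\'olya/Muirhead characterization, $\sqvec a\succ\sqvec b$ implies that $\sqvec b$ is obtained from $\sqvec a$ by finitely many T-transforms. Each such transform replaces two coordinates $(s,t)$ with $s\ge t\ge0$ by $(\lambda s+(1-\lambda)t,\,(1-\lambda)s+\lambda t)$ for some $\lambda\in[0,1]$. Since the law of $X$ is unconditional, $\langle a,X\rangle^2$ depends only on $\sqvec a$. It is also invariant under coordinate permutations. It therefore suffices to treat the case where $a$ and $b$ differ only in the first two coordinates and $(b_1^2,b_2^2)$ is a T-transform of $(a_1^2,a_2^2)$.

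\textbf{Conditioning via the Dirichlet representation.} Write $X=(X_1,X_2,X')$. Conditionally on $X'$ and on the value $r=1-\|X'\|_p^p$, the pair $(X_1,X_2)$ is uniform on $r^{1/p}B_p^2$; here the $p$-radial structure is used, and the Dirichlet/Beta representation makes this explicit. Put $W=\langle a',X'\rangle$, which is common to both sides. We must show
\[
\E\,\Phi\bigl((b_1Y_1+b_2Y_2+W)^2\bigr)\le\E\,\Phi\bigl((a_1Y_1+a_2Y_2+W)^2\bigr)
\]
for $Y$ uniform on $\rho B_p^2$, for every convex $\Phi$, and for every fixed $W=w$ and $\rho$. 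Convex order of nonnegative variables with equal means is characterized by the hinge functions $\Phi(x)=(x-c)_+$, $c\ge0$, and the means agree because $\E Y_1Y_2=0$ and $\E Y_1^2=\E Y_2^2$. Using the sign symmetry $Y\mapsto -Y$ together with the fact that $w$ enters only through $(\cdot+w)^2$, the hinge function becomes $u\mapsto(|u+w|-\sqrt c)_+$ composed with $(\cdot)^2$. Mixing over the independent sign of $(Y_1,Y_2)$ and symmetrizing in $w$, the problem becomes a two-dimensional inequality: for $\theta$-parametrized unit vectors $(\cos\theta,\sin\theta)$ scaled by $\sqrt{s+t}$, the function
\[
\theta\mapsto \E\,h\bigl(\sqrt{s+t}\,(\cos\theta\,Y_1+\sin\theta\,Y_2)+w\bigr)
\]
should be nonincreasing on $[0,\pi/4]$, with $h(u)=((u^2-c)_+)$ even and convex. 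This is the hinge-function comparison.

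\textbf{Main obstacle.} The main difficulty is this monotonicity in $\theta$ for the uniform measure on $B_p^2$ when the shift $w\neq0$ is present, since without a shift it would follow from known moment results. I would first differentiate in $\theta$ and, using the divergence theorem on $B_p^2$, express the derivative as a boundary integral over $\partial B_p^2$. Its sign should then come from comparing $h'$ at paired points $(x_1,x_2)$ and $(x_2,x_1)$. Here $p\le2$ enters: along the boundary the normal weight $|x_1|^{p-1}$ versus $|x_2|^{p-1}$ favours the diagonal in the correct direction, and $h'$ is odd and increasing. If the direct computation proves unwieldy, my fallback is to decompose $Y=R\cdot(\text{Beta--Rademacher angular part})$, prove the inequality for each fixed radial layer, and then integrate against the radial density. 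That route requires a separate lemma showing that the perturbation $w+\varepsilon\beta$ preserves the two-point majorization transfer.
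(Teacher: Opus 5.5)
\textbf{Gap: the fixed-shift conditional inequality is false.} Your reduction to one elementary $T$-transform, and your boundary plan for the unshifted planar case, agree with the paper. The gap is the conditioning step. You reduce to proving, for every fixed $w$, every $\rho$ and every convex $\Phi$,
\[
 \E\Phi\bigl((b_1Y_1+b_2Y_2+w)^2\bigr)\le\E\Phi\bigl((a_1Y_1+a_2Y_2+w)^2\bigr),
 \qquad Y\sim\operatorname{Unif}(\rho B_p^2).
\]
This fails whenever $p<2$ and the transform is nontrivial. Take the convex test $\Phi(x)=(c-x)_+$. For any real random variable $Z$, Fubini gives
\[
 \int_{\R}\E\bigl(c-(Z+w)^2\bigr)_+\dd w=\int_{\R}(c-u^2)_+\dd u=\tfrac43c^{3/2},
\]
whatever the law of $Z$. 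So your inequality for all $w$ would force equality for every $w$ and every $c>0$. Letting $c\downarrow0$, an approximate identity, would then give $\langle a,Y\rangle\overset{d}{=}\langle b,Y\rangle$. This contradicts $\|a\|_q>\|b\|_q$, $q=p/(p-1)$, which fixes the support endpoints.

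Concretely, take $p=1$, $\rho=1$, $a=(1,0)$, $b=(1,1)/\sqrt2$. Then $\langle a,Y\rangle$ has density $1-|u|$ on $[-1,1]$, and $\langle b,Y\rangle$ is uniform on $[-1/\sqrt2,1/\sqrt2]$. At $w=1/2$ and $\sqrt c<1/\sqrt2-1/2$, the left side equals $\frac{2\sqrt2}{3}c^{3/2}$ and the right side equals $\frac23c^{3/2}$. Because the conditional means agree, this also breaks your upper-hinge monotonicity in $\theta$. Symmetrizing in $w$ does not help, since $(Z+w)^2$ and $(Z-w)^2$ already have the same law.

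This parameter range is not irrelevant. By scaling only $w/\rho$ matters, and in your conditioning $w/\rho=a_3X_3/(1-|X_3|^p)^{1/p}$ takes every real value when $a_3\neq0$. So the ``main obstacle'' you identify is a false inequality, not a hard one, and no boundary computation can prove it.

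\textbf{Missing idea: average before comparing.} The common coordinates must be integrated out, against the specific law they carry, before any comparison is made. One then shows that the resulting averaged test is convex in the squared projection. The paper appends common coordinates one at a time, $Y\mapsto(1-T)^\alpha Y+cT^\alpha\eps$ with $T\sim\operatorname{Beta}(\alpha,1+d\alpha)$. It proves in Theorem~\ref{thm:app-append} that this map preserves $Y_1^2\cx Y_2^2$, by showing that
\[
 G(q)=\E_{T,\eps}\bigl(\rho^2-((1-T)^\alpha\sqrt q+cT^\alpha\eps)^2\bigr)_+
\]
is convex in $q$. This step uses $\beta\ge1+2\alpha$, through the distributional second-derivative formula and the weighted layer inequalities, and it is where almost all of the work lies.

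With that mechanism, the planar comparison is only needed at $w=0$, where your divergence-theorem approach fits. Even that unshifted case does not follow from known power-moment inequalities, which do not control all hinge functions; the paper proves it directly.

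Your fallback lemma on ``the perturbation $w+\varepsilon\beta$'' points in roughly the right direction. However, it is not formulated precisely, and the actual preservation statement and its proof are missing from your proposal.
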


Majorization includes equality of the coordinate sums, so the hypothesis
implies $\|a\|_2=\|b\|_2$. Consequently, the two random variables in
\eqref{eq:convex-order-main} have the same mean. Equivalently, the
theorem compares all hinge functions:
\[
 \E\bigl(\langle b,X\rangle^2-t\bigr)_+
 \le
 \E\bigl(\langle a,X\rangle^2-t\bigr)_+,
 \qquad t\ge0.
\]
Thus it is stronger than either a comparison of one prescribed moment or
a statement about extremal central sections.

Majorization inequalities for individual power moments of projections
of $B_q^n$, $0<q\le2$, were obtained by Eskenazis, Nayar, and Tkocz
\cite{EskenazisNayarTkocz}. Related distributional stability refinements
of the Szarek and Ball inequalities were established by the same authors
in \cite{EskenazisNayarTkoczStability}. The former moment theorem covers
the larger range $0<q\le2$; in the non-Euclidean overlap
$1\le p<2$, Theorem~\ref{thm:convex-order-main} is stronger in the
type of comparison, replacing the family of power tests by every convex
function of the squared projection. At $p=2$, all equal-norm projections
have the same law. In particular, the theorem simultaneously yields the
moment inequalities in Corollary~\ref{cor:moments-laplace} and the
non-power Laplace-transform comparison
\[
 \E e^{-\lambda\langle b,X\rangle^2}
 \le
 \E e^{-\lambda\langle a,X\rangle^2},
 \qquad \lambda>0.
\]
The latter is the input for the Gaussian-regularized section
inequalities in Section~\ref{sec:applications}.

A classical line of work on stochastic majorization begins with a
result of Marshall and Proschan relating coefficient
majorization to convex inequalities for linear combinations of
exchangeable random variables; see also the subsequent theory of
stochastic majorization developed by Nevius, Proschan, and Sethuraman
\cite{MarshallProschan,NeviusProschanSethuraman}. Theorem~\ref{thm:convex-order-main} is not a direct instance of this principle.
Here the majorization hypothesis is imposed on the squared coefficient
vector $\sqvec a$, whereas the random variable being ordered is the
nonlinear quantity
\[
 \left(\sum_{i=1}^n a_iX_i\right)^2.
\]
Moreover, the coordinates of a uniform point in $B_p^n$ are dependent.
The Beta--Rademacher preservation theorem is the mechanism that allows
the two-coordinate comparison to be lifted through this dependence.

Majorization of squared coefficients also appears in sharp
Khinchin-type inequalities. Baernstein and Culverhouse established
majorization comparisons for sums of independent uniform spherical and
ball variables under bisubharmonic tests
\cite{BaernsteinCulverhouse}. Related inequalities for sums of
independent uniform random variables were obtained by Lata{\l}a and
Oleszkiewicz \cite{LatalaOleszkiewicz}. Negative-moment comparisons
and their connection with slicing were developed by Chasapis,
K{\"o}nig, and Tkocz \cite{ChasapisKonigTkocz}, while Melbourne and
Roberto obtained slicing inequalities from a different
transport-majorization comparison between densities in convex order
\cite{MelbourneRoberto}. In contrast, Theorem~\ref{thm:convex-order-main} compares the complete squared-projection
laws generated by a single dependent sign--Dirichlet vector.

The extremal central-section problem itself is already understood in
the range $0<p<2$: coordinate directions maximize and diagonal
directions minimize, with a Schur-convex formulation and quantitative
stability refinements
\cite{MeyerPajor,Koldobsky1998,Eskenazis,ChasapisNayarTkocz}.
Theorem~\ref{thm:convex-order-main} is instead a distributional
strengthening of the endpoint theory. A central-section
density records the behavior of one projection law at the single point
zero, whereas convex order controls every convex integral functional of
the squared projection. In Section~\ref{sec:applications}, the
non-strict classical section comparison is recovered from the theorem
by an approximate-identity limit; the cited results are used only for
the exact equality cases at zero smoothing time.

The section problem for $p>2$ is qualitatively different: the
identity of the extremizers can depend on the parameters, and even
specific non-Euclidean ranges require separate arguments; see
\cite{EskenazisNayarTkoczResilience}. At the Euclidean endpoint,
rotational invariance makes all equal-norm projections identically
distributed, so the assertion of
Theorem~\ref{thm:convex-order-main} reduces to equality. The cutoff at $p=2$ is sharp. To see this, assume $n\ge2$, let
$p>2$, put $q=p/(p-1)<2$, and take
\[
 a=e_1,
 \qquad
 b=\frac{e_1+e_2}{\sqrt2}.
\]
Then $\sqvec a\succ\sqvec b$, but
\[
 \sup_{x\in B_p^n}|\langle a,x\rangle|=1,
 \qquad
 \sup_{x\in B_p^n}|\langle b,x\rangle|
 =\|b\|_q=2^{1/q-1/2}>1.
\]
Hence
$\E(\langle b,X\rangle^2-1)_+>0$ while
$\E(\langle a,X\rangle^2-1)_+=0$, contradicting the conclusion of
Theorem~\ref{thm:convex-order-main}. Thus a theory for $p>2$ must use a
different stochastic order or a parameter-dependent comparison.

The comparison is also stable under a common independent radial scaling.

\begin{corollary}[Scale mixtures and radially decreasing laws]\label{cor:radial-intro}
Let $n\ge1$ and $1\le p\le2$, let $X$ be uniform on $B_p^n$, and let
$R\ge0$ be
independent of $X$ with $\E R^2<\infty$. If $\sqvec a\succ\sqvec b$, then
\[
 \langle b,RX\rangle^2\cx\langle a,RX\rangle^2.
\]
Consequently, the same conclusion holds for every probability measure
on $\R^n$ with finite second moment and density
\[
 x\longmapsto \rho(\|x\|_p),
\]
where $\rho:[0,\infty)\to[0,\infty)$ is nonincreasing.
\end{corollary}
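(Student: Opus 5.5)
The plan is to derive both parts of the corollary from Theorem~\ref{thm:convex-order-main} by conditioning on the scale. Convex order is preserved under independent mixtures, which gives the first part. The second part follows by writing a radially decreasing law as a scale mixture of uniform measures on dilates of $B_p^n$.

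\emph{Scale mixtures.} Fix a convex $\Phi$ for which $\E\Phi(\langle a,RX\rangle^2)$ and $\E\Phi(\langle b,RX\rangle^2)$ exist. For each fixed $r\ge0$, the function $\Phi_r(s)=\Phi(r^2s)$ is convex on $[0,\infty)$. Since $\langle c,rX\rangle^2=r^2\langle c,X\rangle^2$, Theorem~\ref{thm:convex-order-main} gives
\[
g_b(r):=\E\Phi_r(\langle b,X\rangle^2)\le \E\Phi_r(\langle a,X\rangle^2)=:g_a(r),
\]
where $X$ is bounded, so these expectations are finite. By independence and Fubini, $\E\Phi(\langle c,RX\rangle^2)=\E g_c(R)$, and integrating the inequality in $R$ gives the claim.

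The only technical point is justifying Fubini when $\Phi$ is not bounded below. I would handle it as follows. A convex $\Phi$ on $[0,\infty)$ is bounded below by an affine function $\alpha+\beta s$. Moreover, $\E\langle c,RX\rangle^2=\E R^2\,\E\langle c,X\rangle^2<\infty$, so the negative parts are integrable. Tonelli then applies to $\Phi-(\alpha+\beta s)\ge0$, and the affine parts agree for $a$ and $b$ because $\|a\|_2=\|b\|_2$ gives equal means. Alternatively, one can note that convex order between nonnegative variables with equal means is equivalent to the hinge inequalities $\E(\cdot-t)_+$ for $t\ge0$, and check only those. Those hinge functions are nonnegative, so Tonelli applies directly.

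\emph{Radially decreasing laws.} Let $\mu$ have density $\rho(\|x\|_p)$ with $\rho$ nonincreasing, and modify $\rho$ on a null set to make it right-continuous. Then $\rho(s)=\int_{(s,\infty)}d\nu(t)+\rho(\infty)$ for the positive measure $\nu=-d\rho$. Integrability forces $\rho(\infty)=0$, so
\[
\rho(\|x\|_p)=\int \1\{\|x\|_p<t\}\,d\nu(t)=\int \vol(tB_p^n)\,\frac{\1_{tB_p^n}(x)}{\vol(tB_p^n)}\,d\nu(t).
\]
Hence $\mu$ is the law of $RX$, where $X$ is uniform on $B_p^n$ and $R$ is independent of $X$ with law $\vol(B_p^n)t^n\,d\nu(t)$. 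This is a probability measure because $\mu$ has total mass $1$. The identity $\E R^2\,\E\|X\|_2^2=\int\|x\|_2^2\,d\mu<\infty$ gives $\E R^2<\infty$, and the first part applies.

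I expect the main (mild) obstacle to be the measure-theoretic representation of $\rho$: fixing the boundary conventions and the null-set modification, and confirming that $\rho(\infty)=0$. These are routine, but they need care.
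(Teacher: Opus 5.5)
Your proposal is correct and follows essentially the same route as the paper. Like the paper, you condition on $R$ and apply Theorem~\ref{thm:convex-order-main} to $u\mapsto\Phi(r^2u)$. You then write the radially decreasing law as the mixture $\int\operatorname{Unif}(rB_p^n)\,\pi(\dd r)$, with $\pi(\dd r)=|B_p^n|r^n\,\nu(\dd r)$ for the Stieltjes measure $\nu=-\dd\rho$, and recover $\E R^2<\infty$ from the second moment of $\mu$. Your extra care goes beyond what the paper writes out: the Tonelli justification via an affine minorant or the hinge criterion, the null-set right-continuous modification of $\rho$, and the Tonelli computation of $\pi((0,\infty))=1$ in place of the paper's integration by parts.
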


The proof of Theorem~\ref{thm:convex-order-main} uses two ingredients.
The first is a direct two-dimensional comparison under a balancing of
two coefficients. The second is a preservation principle that allows
common coordinates to be appended without destroying squared convex
order. This preservation result is stated independently because it is
the main probabilistic mechanism of the proof.

\begin{theorem}[Beta--Rademacher preservation]\label{thm:app-append}
Let $1/2\le\alpha<1$, let $\beta\ge1+2\alpha$, and let $c\ge0$. Let
$Y_1,Y_2$ be square-integrable real random variables. Let
$T\sim\operatorname{Beta}(\alpha,\beta)$ and let $\eps$ be a Rademacher
sign, with $(T,\eps)$ independent of $(Y_1,Y_2)$. If
\[
 Y_1^2\cx Y_2^2,
\]
then
\[
 \left((1-T)^\alpha Y_1+cT^\alpha\eps\right)^2
 \cx
 \left((1-T)^\alpha Y_2+cT^\alpha\eps\right)^2.
\]
\end{theorem}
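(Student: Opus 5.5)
The first step removes the signs and the law of $Y_i$. Because $\eps$ is symmetric and independent of $(T,Y_i)$, the law of $\bigl((1-T)^\alpha Y+cT^\alpha\eps\bigr)^2$ given $Y$ depends only on $Y^2$. For a convex $\Phi$ (of admissible growth) we can therefore write
\[
 \E\Phi\Bigl(\bigl((1-T)^\alpha Y_i+cT^\alpha\eps\bigr)^2\Bigr)=\E G_\Phi(Y_i^2),
 \qquad
 G_\Phi(s)=\E\Phi\Bigl(\bigl((1-T)^\alpha\sqrt s+cT^\alpha\eps\bigr)^2\Bigr),\quad s\ge0.
\]
The claim then follows from $Y_1^2\cx Y_2^2$ once we know that $G_\Phi$ is convex on $[0,\infty)$ for every convex $\Phi$. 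Every convex $\Phi$ on $[0,\infty)$ is, up to an affine term, a mixture of hinges $x\mapsto(x-t)_+$ with $t\ge0$, with a monotone-convergence argument handling integrability. Affine $\Phi$ give affine $G_\Phi$, because $\E\bigl((1-T)^\alpha\sqrt s+cT^\alpha\eps\bigr)^2=s\,\E(1-T)^{2\alpha}+c^2\E T^{2\alpha}$. So it suffices to prove convexity of
\[
 h_t(s)=\tfrac12\E\Bigl[\bigl((1-T)^\alpha\sqrt s+cT^\alpha\bigr)^2-t\Bigr]_+
 +\tfrac12\E\Bigl[\bigl((1-T)^\alpha\sqrt s-cT^\alpha\bigr)^2-t\Bigr]_+ .
\]
The case $c=0$ is trivial, since then $h_t(s)=\E\bigl((1-T)^{2\alpha}s-t\bigr)_+$ is convex. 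For $c>0$ the joint scaling $s\mapsto s/c^2$, $t\mapsto t/c^2$ reduces to $c=1$.

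Next I would compute $h_t'(s)$. Write $u=(1-T)^\alpha$, $v=T^\alpha$, $r=\sqrt s$ and $\tau=\sqrt t$. Differentiating under the expectation gives
\[
 2h_t'(s)=\E\Bigl[u^2\bigl(\1\{u r+v>\tau\}+\1\{|ur-v|>\tau\}\bigr)\Bigr]
 +\frac1r\,\E\Bigl[uv\bigl(\1\{ur+v>\tau\}-\1\{|ur-v|>\tau\}\bigr)\Bigr].
\]
When both indicators are active, the $\sqrt s$-terms cancel and the contribution is $u^2$ times a nondecreasing indicator, so it is harmless. The danger is the region where only the "$+$" branch exceeds the threshold. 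There the integrand carries the positive term $uv/r$, which decreases in $r$. This is exactly the concave $\sqrt s$-piece, and it shows that pointwise convexity in $T$ fails. Convexity therefore has to come from the averaging over $T$.

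To show that $h_t'$ is nondecreasing, I would rewrite each event as an interval of $t\in(0,1)$ in which $T$ lies. The map $T\mapsto(1-T)^\alpha r\pm T^\alpha$ is concave because $\alpha<1$, so the superlevel sets of $ur+v$ are intervals and the event $\{|ur-v|>\tau\}$ is a union of at most two intervals, with endpoints depending smoothly on $r$. I would then differentiate $h_t'$ in $r$ once more, which gives two kinds of terms. Boundary terms carry the Beta density $T^{\alpha-1}(1-T)^{\beta-1}$ times $\partial_r$ of the endpoints, and they are nonnegative for the $u^2$-part. The bulk term is $-r^{-2}\E[uv\,\1\{\text{only the $+$ branch active}\}]$. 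The goal is to show that the boundary gain at the endpoint where $ur+v$ crosses $\tau$ dominates the bulk loss. For this I would integrate the bulk term by parts in $T$, using $\frac{\mathrm d}{\mathrm dT}$ of $(1-T)^\alpha r+T^\alpha$ to express $uv$ as a derivative. The hypothesis $\beta\ge1+2\alpha$ should make the resulting density factor, roughly $T^{2\alpha-1}(1-T)^{\beta-1-\alpha}$ against $(1-T)^{\alpha}$-weights, monotone in the right direction. The hypothesis $\alpha\ge1/2$ should keep the density near $T=0$ bounded, so that boundary terms at $T\to0$ vanish. I expect this sign verification to be the main obstacle. If the direct integration by parts is too messy, my fallback is to recognize $((1-T)^\alpha r,T^\alpha)$ with $T$ Beta-distributed as the pair (radial part, last coordinate) of a lower-dimensional $\ell_{1/\alpha}$-ball Dirichlet vector. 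The quantity $h_t$ then becomes a two-dimensional hinge comparison of exactly the type provided by the paper's first ingredient, and I would reduce to that.
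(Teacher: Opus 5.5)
\paragraph{There is a genuine gap.} Your outer reduction matches the paper's. Sign symmetry makes the conditional transform a function of $Y^2$ alone, affine tests are preserved, and everything reduces to convexity in $s$ of one hinge transform. Its second derivative is a sum of boundary masses minus the bulk term $r^{-2}\E[uv\,\1\{\text{only the }+\text{ branch active}\}]$, which is the content of Lemma~\ref{lem:app-second-dist}.

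The gap is the step that carries the whole theorem: showing that the boundary masses dominate the bulk term. You leave this as an expectation, and the mechanism you name cannot work. You plan to pay for the bulk loss with the boundary gain where $ur+v$ crosses $\tau$, treating the $-$ branch as harmless. Take $c=1$, $r=1$, $\tau=1/2$.
\begin{itemize}
\item Since $\alpha<1$, $(1-T)^\alpha+T^\alpha\ge1>\tau$ for all $T$, so the $+$ branch has no crossing at all.
\item But $(1-T)^\alpha-T^\alpha$ decreases from $1$ to $-1$. So the bulk region is a nondegenerate interval $[T_1,T_2]$ on which $|ur-v|\le\tau$, and the bulk loss is strictly negative.
\item The only compensating mass sits at $T_1$ and $T_2$, on the $-$ branch.
\end{itemize}
This is exactly the paper's central configuration: both ends are roots of $G=\pm L$ (Lemma~\ref{lem:app-root-config}). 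In the mixed configuration, one end is of each type. So the $-$ branch boundary masses are indispensable, not harmless. Your bookkeeping there is also off. The indicator $\1\{|ur-v|>\tau\}$ is not monotone in $r$, and at $T_2$ the $u^2$-part of the boundary term is negative. Only the combined coefficient $u(v-ur)/r=u\tau/r$ is positive.

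\paragraph{What the paper actually needs.} Even with both branches in place, the required inequality is delicate. An integration by parts that makes ``a density factor monotone'' does not replace the paper's argument. The paper substitutes $s=R(1+R^p)^{-1/p}=T^\alpha$ and rescales with $B=x/A$, $L=1/A$. This yields the weighted centered layer inequality \eqref{eq:app-wcl} with $\delta=\beta+\alpha-1\ge3\alpha$, which it proves component by component using:
\begin{itemize}
\item a beta-trapezoid bound;
\item Cauchy--Schwarz together with the polynomial inequality of Lemma~\ref{lem:app-poly};
\item explicit compression estimates on mixed and central components;
\item a separate cap estimate;
\item an argument affine in $w^\delta$ to pass from $\delta=3\alpha$ to $\delta\ge3\alpha$.
\end{itemize}

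\paragraph{Your fallback also fails.} Proposition~\ref{prop:app-base2} compares two equal-norm directions for the uniform law on $B_p^2$. That law corresponds to $\beta=1+\alpha$, which is outside the range $\beta\ge1+2\alpha$. Convexity of $h_t$ in $s$ at fixed $c$ instead compares coefficient vectors $(\sqrt s,c)$ of different norms. It is equivalent to comparing arbitrary input laws, for example a point mass at $s_2$ against a two-point law on $\{s_1,s_3\}$, and these are not projections of a ball.

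\paragraph{The endpoint $\alpha=1/2$.} The condition $\alpha\ge1/2$ does not keep the Beta density bounded at $T=0$: it behaves like $T^{\alpha-1}$ for every $\alpha<1$. The paper obtains $\alpha=1/2$ by approximation from $\alpha>1/2$.
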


For $1<p<2$, put $\alpha=1/p$. The sign--Dirichlet representation of
the uniform measure on $B_p^n$ shows that adjoining a common coordinate
in dimension $d+1$ has precisely the form in
Theorem~\ref{thm:app-append}, with
\[
 \beta=1+d\alpha\ge1+2\alpha.
\]
Since majorization is generated by elementary two-coordinate
balancing operations, the planar comparison and the preservation
theorem imply Theorem~\ref{thm:convex-order-main} for $1<p<2$. The
endpoint $p=1$ follows by weak convergence, while $p=2$ follows directly
from rotational invariance.

The paper is organized as follows. Section~\ref{sec:preliminaries}
collects the majorization, convex-order, and Dirichlet facts used in the
proof. Section~\ref{sec:two-dimensional} establishes the planar
comparison. Section~\ref{sec:preservation} proves
Theorem~\ref{thm:app-append}. Section~\ref{sec:main-proof} completes the proof of
Theorem~\ref{thm:convex-order-main}, including strictness for $p<2$
and the endpoint arguments at $p=1,2$. Section~\ref{sec:extensions} proves the radial
extension and records moment and Laplace-transform consequences.
Section~\ref{sec:applications} treats Gaussian convolution and central
hyperplane sections.

\section{Majorization, convex order, and Dirichlet laws}\label{sec:preliminaries}

\subsection{Majorization and elementary transfers}

For $u\in\R^n$, let $u^\downarrow$ denote the vector obtained by
rearranging its coordinates in nonincreasing order. For vectors
$u,v\in\R^n$ with the same coordinate sum, one has $u\succ v$ if
\[
 \sum_{i=1}^k u_i^\downarrow
 \ge
 \sum_{i=1}^k v_i^\downarrow,
 \qquad 1\le k<n.
\]
An elementary $T$-transform replaces two coordinates $(x,y)$ by a more
balanced pair with the same sum and leaves all remaining coordinates
fixed. The Hardy--Littlewood--P\'olya characterization of majorization
states that $u\succ v$ if and only if $v$ can be obtained from $u$ by
finitely many such transforms, up to a permutation
\cite[Ch.~2]{MarshallOlkinArnold}. We shall apply this to the
nonnegative vectors $\sqvec a$ and $\sqvec b$.

\subsection{Convex order and hinge functions}

For nonnegative integrable random variables, it is enough to test
convex order on affine functions and one-parameter families of hinge
functions. We use the lower-hinge form because it is adapted to the
two-dimensional boundary calculation below. The following
characterization is standard; see, for example,
\cite[Ch.~3]{ShakedShanthikumar}.

\begin{lemma}[Hinge-function characterization of convex order]\label{lem:app-stoploss}
Let $U,V$ be nonnegative integrable random variables. Then $U\cx V$ if and only if $\E U=\E V$ and
\[
 \E(a-U)_+\le \E(a-V)_+\qquad(a\ge0).
\]
\end{lemma}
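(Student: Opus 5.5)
We prove Lemma~\ref{lem:app-stoploss} by reducing both directions to the classical stop-loss (upper-hinge) characterization, which says that for integrable $U,V$ one has $U\cx V$ if and only if $\E U=\E V$ and $\E(U-t)_+\le\E(V-t)_+$ for all $t\in\R$.

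\emph{Necessity.} Suppose $U\cx V$. The functions $x\mapsto x$ and $x\mapsto -x$ are both convex, so testing against them gives $\E U=\E V$. For each $a\ge0$ the function $x\mapsto(a-x)_+$ is convex and bounded on $[0,\infty)$, so both expectations are finite. Convex order then gives $\E(a-U)_+\le\E(a-V)_+$.

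\emph{Sufficiency.} Assume $\E U=\E V$ and the lower-hinge inequalities. The key step is the pointwise identity
\[
 (x-a)_+=(x-a)+(a-x)_+ ,
\]
valid for all real $x$ and $a$. Taking expectations gives
\[
 \E(U-a)_+=\E U-a+\E(a-U)_+ ,
\]
and likewise for $V$. Since the means agree, the lower-hinge inequality for a given $a\ge0$ is equivalent to the upper-hinge inequality $\E(U-a)_+\le\E(V-a)_+$.

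For $a<0$, nonnegativity of $U$ and $V$ gives $(U-a)_+=U-a$ and $(V-a)_+=V-a$. Hence $\E(U-a)_+=\E U-a=\E V-a=\E(V-a)_+$, so the upper-hinge inequality holds with equality. The stop-loss inequalities therefore hold for every $t\in\R$.

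It remains to pass from hinge functions to a general convex $\Phi$ for which $\E\Phi(U)$ and $\E\Phi(V)$ exist. Only the restriction of $\Phi$ to $[0,\infty)$ matters. We would approximate $\Phi$ from below by a nondecreasing sequence of functions of the form
\[
 \Phi_k(x)=c_k+d_kx+\sum_j w_{k,j}(x-t_{k,j})_+ ,\qquad w_{k,j}\ge0,
\]
namely piecewise-linear interpolants of $\Phi$ on finer and finer finite grids in $[0,\infty)$. On each compact piece the interpolant is affine plus a positive combination of hinges. For each such $\Phi_k$, the equal means handle the affine part and the stop-loss inequalities handle the hinges, so $\E\Phi_k(U)\le\E\Phi_k(V)$. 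The conclusion $\E\Phi(U)\le\E\Phi(V)$ then follows by monotone convergence. The $\Phi_k$ are bounded below by an integrable affine function, so monotone convergence applies after subtracting it.

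This limiting argument is the only delicate step, mainly the integrability bookkeeping when $\Phi$ is unbounded near infinity. It is exactly the standard argument in \cite[Ch.~3]{ShakedShanthikumar}, so in the paper we would simply cite it.
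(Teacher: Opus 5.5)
Your proof is correct and follows the paper's route. Both arguments reduce to the classical stop-loss characterization and use the identity $(x-a)_+=(x-a)+(a-x)_+$ together with $\E U=\E V$ to pass between upper and lower hinges; your explicit treatment of $a<0$ and of the necessity direction spells out what the paper leaves implicit.

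One small slip is in your optional sketch of the classical result. Piecewise-linear interpolants of a convex $\Phi$ lie \emph{above} $\Phi$ between the nodes, so they are not lower approximants. The nondecreasing lower approximants should instead be maxima of finitely many supporting lines, each of which is affine plus a nonnegative combination of hinges. Since you, like the paper, ultimately cite the standard result for this step, the slip does not affect the argument.
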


\begin{proof}
The usual stop-loss characterization for integrable random variables says that $U\cx V$ is equivalent to equality of means and
\[
 \E(U-a)_+\le\E(V-a)_+\qquad(a\ge0).
\]
All expectations are finite because $U,V$ are nonnegative and integrable. Since $(a-U)_+=a-U+(U-a)_+$, equality of means makes the upper and lower stop-loss forms equivalent.
\end{proof}

\subsection{The sign--Dirichlet model}

For the rest of this section assume $1<p<2$ and set
\[
 \alpha=\frac1p\in\left(\frac12,1\right).
\]
The following representation is standard
\cite[Section~2]{BartheGuedonMendelsonNaor}.
\begin{lemma}[Sign--Dirichlet representation]\label{lem:app-dirichlet}
Let $X^{(n)}$ be uniform on $B_p^n$. Then
\[
 X_i=\eps_iT_i^{\alpha},\qquad i=1,\ldots,n,
\]
where $\eps_1,\ldots,\eps_n$ are independent Rademacher signs and
\[
 (T_0,T_1,\ldots,T_n)\sim\operatorname{Dirichlet}(1,\alpha,\ldots,\alpha),
\]
independently of the signs. Here $T_0=1-\sum_{i=1}^n |X_i|^p$.
\end{lemma}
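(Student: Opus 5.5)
The plan is a direct change of variables on each orthant, followed by reading off the Dirichlet density. Since $X$ is uniform on $B_p^n$, its density is $\vol(B_p^n)^{-1}\1_{B_p^n}(x)$. This density is invariant under every sign change $x_i\mapsto -x_i$. I would therefore write $X_i=\eps_i|X_i|$ with $\eps_i=\operatorname{sign}(X_i)$; the sign is well defined almost surely because coordinate hyperplanes are null sets. The first step is to show that the sign vector $(\eps_1,\ldots,\eps_n)$ is uniform on $\{\pm1\}^n$ and independent of $(|X_1|,\ldots,|X_n|)$. For a sign pattern $s$ and a Borel set $A\subset[0,\infty)^n$, the probability $\Pr(\eps=s,\ |X|\in A)$ equals the volume of the reflected copy of $A\cap B_p^n$ in the orthant indexed by $s$, divided by $\vol(B_p^n)$. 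Reflections preserve volume, so this equals $2^{-n}\Pr(|X|\in A)$, which is exactly the asserted factorization.

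Next I would compute the law of $T_i:=|X_i|^p$ for $i=1,\ldots,n$. On the positive orthant I use the substitution $x_i=t_i^{\alpha}$ with $\alpha=1/p$, so that $\dd x_i=\alpha t_i^{\alpha-1}\dd t_i$. The positive-orthant part of $B_p^n$ maps bijectively onto the simplex $\Delta=\{t\in[0,\infty)^n:\sum_i t_i\le1\}$. Hence $(T_1,\ldots,T_n)$ has density
\[
 t\longmapsto C\prod_{i=1}^n t_i^{\alpha-1}\,\1_\Delta(t),
\]
where $C$ is a normalizing constant. Setting $T_0=1-\sum_{i\ge1}T_i$, I compare this with the Dirichlet$(\gamma_0,\gamma_1,\ldots,\gamma_n)$ density written in the free coordinates $(t_1,\ldots,t_n)$. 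That density is proportional to
\[
 t_0^{\gamma_0-1}\prod_{i\ge1} t_i^{\gamma_i-1}\quad\text{on }\Delta .
\]
Matching exponents gives $\gamma_i=\alpha$ for $i\ge1$. The factor $t_0^{\gamma_0-1}$ must be absent, which forces $\gamma_0=1$. Both functions are probability densities on $\Delta$ and are proportional, so they coincide, and the constant $C$ need not be computed. The identity $T_0=1-\sum|X_i|^p$ holds by construction, and inverting gives $X_i=\eps_iT_i^{\alpha}$.

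There is no genuine obstacle in this argument. The only point needing care is the bookkeeping showing that the signs are independent of the whole Dirichlet vector $(T_0,\ldots,T_n)$, not merely of each $T_i$ separately. This follows from the first step, because $(T_0,\ldots,T_n)$ is a measurable function of $(|X_1|,\ldots,|X_n|)$. The computation does not use $1<p<2$; the restriction only reflects how the representation is used later.
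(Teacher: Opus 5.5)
Your proposal is correct and follows the same route as the paper. Both arguments substitute $x_i=t_i^{\alpha}$ on the positive orthant, identify the resulting density $\propto\prod_i t_i^{\alpha-1}$ on the simplex with the Dirichlet$(1,\alpha,\ldots,\alpha)$ law via the slack coordinate, and obtain the signs from unconditionality; you additionally spell out the reflection argument showing that the signs are independent of the whole vector $(T_0,\ldots,T_n)$, which the paper states in one line.
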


\begin{proof}
In the positive orthant set $t_i=x_i^p$. Then
\[
 \dd x_1\cdots\dd x_n=\alpha^n\prod_{i=1}^n t_i^{\alpha-1}\,\dd t_1\cdots\dd t_n,
\]
with $t_i\ge0$ and $t_1+\cdots+t_n\le1$. Adding the slack coordinate $t_0=1-\sum_i t_i$ gives the stated Dirichlet law. The signs are independent and uniform by unconditionality.
\end{proof}

\begin{lemma}[Dirichlet append step]\label{lem:app-append-distribution}
Let $Y$ be a projection generated by a $d$-dimensional $p$-ball through the sign-Dirichlet representation. Appending one coordinate with coefficient $c\ge0$ gives
\begin{equation}\label{eq:app-append}
 \widetilde Y=(1-T)^\alpha Y+cT^\alpha\eps,
\end{equation}
where $\eps$ is a Rademacher sign and
\[
 T\sim\operatorname{Beta}(\alpha,1+d\alpha)
\]
is independent of $Y$ and $\eps$.
\end{lemma}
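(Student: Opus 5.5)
The plan is to derive the append step from Lemma~\ref{lem:app-dirichlet} together with the aggregation property of Dirichlet laws. Let $X^{(d+1)}$ be uniform on $B_p^{d+1}$, and write $X_i=\eps_iT_i^\alpha$ with
\[
 (T_0,T_1,\ldots,T_{d+1})\sim\operatorname{Dirichlet}(1,\alpha,\ldots,\alpha).
\]
Consider the projection $\widetilde Y=\sum_{i=1}^d a_iX_i+cX_{d+1}$. Put $T=T_{d+1}$ and $\eps=\eps_{d+1}$. The aim is to factor out the mass $1-T$ carried by the first block of coordinates.

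First I will invoke the standard neutrality, or stick-breaking, property of the Dirichlet distribution. Set
\[
 S_j=\frac{T_j}{1-T},\qquad j=0,\ldots,d.
\]
Then $T\sim\operatorname{Beta}(\alpha,1+d\alpha)$, since the remaining parameters sum to $1+d\alpha$. Moreover, $(S_0,\ldots,S_d)\sim\operatorname{Dirichlet}(1,\alpha,\ldots,\alpha)$ and is independent of $T$. To justify this, I will change variables $t_j=(1-t)s_j$ in the Dirichlet density. The Jacobian is $(1-t)^{d}$, and the density factors as
\[
 t^{\alpha-1}(1-t)^{d\alpha}\cdot s_0^{0}\prod_{j=1}^d s_j^{\alpha-1},
\]
with respect to Lebesgue measure on the $d$-simplex in the $s$ variables.

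Next I will substitute $T_i^\alpha=(1-T)^\alpha S_i^\alpha$ for $i\le d$. This gives
\[
 \widetilde Y=(1-T)^\alpha\sum_{i=1}^d a_i\eps_iS_i^\alpha+cT^\alpha\eps.
\]
By Lemma~\ref{lem:app-dirichlet} in dimension $d$, the vector $(\eps_iS_i^\alpha)_{i\le d}$ is uniform on $B_p^d$. Hence the inner sum $Y$ has exactly the law of the $d$-dimensional projection. Independence of $Y$, $T$ and $\eps$ follows from three facts. The signs are independent of the Dirichlet vector. The vector $(S_j)$ is independent of $T$. Finally, $\eps=\eps_{d+1}$ is distinct from $\eps_1,\ldots,\eps_d$.

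The only genuine content is the neutrality factorization, which is a routine Jacobian computation. The statement is about laws, so the representation above suffices: the triple $(Y,T,\eps)$ has the stated joint law, and therefore $\widetilde Y$ has the law \eqref{eq:app-append}. The edge case $d=0$ is consistent with this: then $Y=0$ and $T\sim\operatorname{Beta}(\alpha,1)$, which is the law of $|X_1|^p$ in dimension one.
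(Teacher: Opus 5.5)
Your proposal is correct and follows the paper's argument. The paper likewise isolates the last Dirichlet mass $T\sim\operatorname{Beta}(\alpha,1+d\alpha)$ and uses that the remaining masses, slack included and normalized by $1-T$, form an independent $d$-dimensional $\operatorname{Dirichlet}(1,\alpha,\ldots,\alpha)$ vector; combined with the sign--Dirichlet representation, this gives \eqref{eq:app-append}. Your explicit change of variables $t_j=(1-t)s_j$ with Jacobian $(1-t)^d$ supplies the neutrality computation that the paper states without proof.
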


\begin{proof}
In the $(d+1)$-dimensional Dirichlet vector, aggregate the first $d$ non-slack masses and the slack into one block. The last coordinate mass $T$ has beta law $\operatorname{Beta}(\alpha,1+d\alpha)$. Conditional on $T$, the remaining normalized masses have the $d$-dimensional sign-Dirichlet law. This gives \eqref{eq:app-append}.
\end{proof}

\section{The two-dimensional comparison}\label{sec:two-dimensional}

Throughout this section let $1<p<2$ and put $\alpha=1/p\in(1/2,1)$.
The goal is to compare squared projections when two coefficients are
balanced while their Euclidean norm is held fixed. The proof is made
at the level of lower hinge functions and uses a boundary
differentiation on the planar ball $B_p^2$.

\begin{lemma}[Two-interval comparison]\label{lem:app-two-interval}
Fix $\ell\ge0$, $0\le B\le1$, and $0\le x\le y$. Define
\[
 \Phi_\ell(t)=(\ell^2-t^2)_+,
 \qquad
 Q_\ell(x,y)=\Phi_\ell(By-x)-\Phi_\ell(By+x).
\]
Then
\[
 Q_\ell(x,y)\ge Q_\ell(y,x).
\]
\end{lemma}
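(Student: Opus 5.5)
The plan is to write both sides of the inequality as integrals of the same weight over two intervals and compare them by a short case analysis. Since $\Phi_\ell$ is even, $Q_\ell(x,y)=\Phi_\ell(s_1)-\Phi_\ell(s_2)$ and $Q_\ell(y,x)=\Phi_\ell(r_1)-\Phi_\ell(r_2)$, where
\[
 s_1=|By-x|,\quad s_2=By+x,\qquad r_1=y-Bx,\quad r_2=y+Bx .
\]
All four numbers are nonnegative.

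The first step orders the endpoints, using only $0\le x\le y$ and $0\le B\le1$. We have $r_2-s_2=(1-B)(y-x)\ge0$. For the lower endpoints there are two cases. If $By\ge x$, then $r_1-s_1=(1-B)(x+y)\ge0$. If $By<x$, then $r_1-s_1=(1+B)(y-x)\ge0$. Hence
\[
 s_1\le r_1,\qquad s_2\le r_2 .
\]

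The second step turns the differences into integrals. On $[0,\infty)$ the function $\Phi_\ell$ has derivative $-w(t)$ almost everywhere, with $w(t)=2t\,\1_{\{t\le\ell\}}$. Therefore, for $0\le u\le v$,
\[
 \Phi_\ell(u)-\Phi_\ell(v)=\bigl(\min(v,\ell)^2-u^2\bigr)_+ .
\]
We also need the identity
\[
 s_2^2-s_1^2=(By+x)^2-(By-x)^2=4Bxy=(y+Bx)^2-(y-Bx)^2=r_2^2-r_1^2 .
\]
This equality of the "untruncated" masses is the key observation.

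The third step compares $Q_\ell(x,y)=(\min(s_2,\ell)^2-s_1^2)_+$ with $Q_\ell(y,x)=(\min(r_2,\ell)^2-r_1^2)_+$ in three cases.

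\begin{enumerate}[label=(\roman*)]
\item If $r_2\le\ell$, then also $s_2\le\ell$. Both sides equal $4Bxy$, so equality holds.
\item If $r_2>\ell$ and $s_2\ge\ell$, the left side is $(\ell^2-s_1^2)_+$ and the right side is $(\ell^2-r_1^2)_+$. The inequality follows from $s_1\le r_1$.
\item If $r_2>\ell>s_2$, the left side is $s_2^2-s_1^2=4Bxy=r_2^2-r_1^2$. This is at least $(\ell^2-r_1^2)_+$, which is the right side.
\end{enumerate}

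The only step that needs care is the first. The ordering $s_1\le r_1$ requires separating $By\ge x$ from $By<x$. Without it, the left interval $[s_1,s_2]$ being longer but placed lower, against an increasing weight, would look unfavorable. The identity $s_2^2-s_1^2=r_2^2-r_1^2$ removes this difficulty, since the truncation at $\ell$ can only hurt the right-hand side more.
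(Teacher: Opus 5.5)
Your proof is correct and follows the paper's argument. Both proofs write the two sides as $2r\1_{\{r<\ell\}}$-weighted masses of $[|By-x|,By+x]$ and $[y-Bx,y+Bx]$, then combine the endpoint ordering with the common untruncated mass $4Bxy$ in a case analysis on $\ell$; you additionally verify $s_1\le r_1$, which the paper only asserts, and your three-case split is a slightly tidier organization of the same idea.
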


\begin{proof}
Since $\Phi_\ell$ is even,
\[
 Q_\ell(x,y)=\Phi_\ell(|By-x|)-\Phi_\ell(By+x).
\]
For $A\le C$,
\[
 \Phi_\ell(A)-\Phi_\ell(C)=\int_A^C2r\1_{\{r<\ell\}}\,\dd r.
\]
Thus $Q_\ell(x,y)$ is the weighted mass of the interval
\[
 I_1=[|By-x|,By+x],
\]
while $Q_\ell(y,x)$ is the corresponding mass of
\[
 I_2=[y-Bx,y+Bx].
\]
Write
\[
 a_1=|By-x|,\quad b_1=By+x,\qquad
 a_2=y-Bx,\quad b_2=y+Bx.
\]
Then $a_1\le a_2$, $b_1\le b_2$, and
\[
 b_1^2-a_1^2=b_2^2-a_2^2=4Bxy.
\]
For any interval $[a,b]$ with $a\le b$,
\[
 \int_{[a,b]\cap[0,\ell]}2r\,\dd r
 =
 \begin{cases}
 0, & \ell\le a,\\
 \ell^2-a^2, & a<\ell<b,\\
 b^2-a^2, & b\le\ell.
 \end{cases}
\]
We compare the two intervals by the possible positions of $\ell$. If $\ell\le a_1$, both masses are zero. If $a_1<\ell\le a_2$, only the first mass is positive. If $a_2<\ell\le b_1$, the difference is
\[
 (\ell^2-a_1^2)-(\ell^2-a_2^2)=a_2^2-a_1^2\ge0.
\]
If $b_1<\ell\le b_2$, the first mass is $b_1^2-a_1^2=b_2^2-a_2^2$, while the second is at most this value. If $b_2<\ell$, the two masses are equal. Thus the mass of $I_1$ is always at least the mass of $I_2$, proving the claim.
\end{proof}

\begin{lemma}[Boundary identity in dimension two]\label{lem:app-base-boundary}
Let $\rho\ge0$ and $u(s)=(1-s^p)^\alpha$. Let
\[
 \theta(\varphi)=ae_1+be_2,
 \qquad
 a=\sqrt\sigma\cos\varphi,
 \qquad
 b=\sqrt\sigma\sin\varphi,
\]
with $a>0$ and $0\le b\le a$. Put $B=b/a$ and $\ell=\rho/a$. If
\[
 F_\rho(\varphi)=\int_{B_p^2}(\rho^2-\langle\theta(\varphi),x\rangle^2)_+\,\dd x,
\]
then
\begin{align}\label{eq:app-base-boundary}
 -F_\rho'(\varphi)
 &=2a^2\alpha\int_0^{1/2}
 \big((1-t)^{2\alpha-1}-t^{2\alpha-1}\big) \notag\\
 &\qquad\times
 \big[Q_\ell(t^\alpha,(1-t)^\alpha)-Q_\ell((1-t)^\alpha,t^\alpha)\big] \dd t.
\end{align}
\end{lemma}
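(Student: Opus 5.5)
The plan is to differentiate under the integral and convert the derivative into a boundary flux, using the divergence theorem on $B_p^2$. Write $\Phi(s)=(\rho^2-s^2)_+$, which is Lipschitz, so that $F_\rho(\varphi)=\int_{B_p^2}\Phi(\langle\theta(\varphi),x\rangle)\dd x$. Since $\theta'(\varphi)=(-b,a)$, differentiation under the integral sign gives
\[
F_\rho'(\varphi)=\int_{B_p^2}\Phi'(\langle\theta,x\rangle)\,\langle\theta',x\rangle\dd x .
\]
Let $Jx=(-x_2,x_1)$. Then $\langle\theta,Jx\rangle=-\langle\theta',x\rangle$, so the integrand equals $-\nabla_x\bigl[\Phi(\langle\theta,x\rangle)\bigr]\cdot Jx$. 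Because $\operatorname{div}(Jx)=0$, the integrand is a divergence, $-\operatorname{div}\bigl(\Phi(\langle\theta,x\rangle)Jx\bigr)$. For $1<p<2$ the boundary $\partial B_p^2$ is $C^1$ and $\Phi$ is Lipschitz, so the divergence theorem applies and yields
\[
-F_\rho'(\varphi)=\int_{\partial B_p^2}\Phi(\langle\theta,x\rangle)\,\langle Jx,n\rangle\dd S .
\]
The point is that rotation is not a symmetry of $B_p^2$, so this flux does not vanish.

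The next step is to parametrize $\partial B_p^2$ in each quadrant. In the first quadrant write $x=(s,u(s))$ with $u(s)=(1-s^p)^\alpha$. For a planar curve, $\langle Jx,n\rangle\dd S$ is, up to an orientation sign, $x_1\dd x_2-x_2\dd x_1$. I would then change variables to $t=s^p$, so that $x=(t^\alpha,(1-t)^\alpha)$ with $t\in[0,1]$.

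Central symmetry $x\mapsto -x$ pairs opposite quadrants, and $\Phi$ is even. Hence the four quadrants reduce to two:
\begin{itemize}
\item the first quadrant, where $\Phi$ is evaluated at $a t^\alpha+b(1-t)^\alpha$;
\item the fourth quadrant, where it is evaluated at $a t^\alpha-b(1-t)^\alpha$, with the orientation of the flux reversed.
\end{itemize}
Using the scaling $\Phi(a r)=a^2\Phi_\ell(r)$ with $\ell=\rho/a$ and $B=b/a$, these become $a^2\Phi_\ell(t^\alpha\pm B(1-t)^\alpha)$. Next I fold $t\mapsto1-t$ to restrict to $t\in[0,1/2]$. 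This swaps the roles of $t^\alpha$ and $(1-t)^\alpha$ and produces exactly the combination $Q_\ell(t^\alpha,(1-t)^\alpha)-Q_\ell((1-t)^\alpha,t^\alpha)$, where evenness of $\Phi_\ell$ converts $\Phi_\ell(t^\alpha-B(1-t)^\alpha)$ into $\Phi_\ell(B(1-t)^\alpha-t^\alpha)$.

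The weight comes out of the computation of $x_1\dd x_2-x_2\dd x_1$ along $(t^\alpha,(1-t)^\alpha)$. This equals
\[
-\alpha\bigl[t^{\alpha}(1-t)^{\alpha-1}+(1-t)^\alpha t^{\alpha-1}\bigr]\dd t=-\alpha\, t^{\alpha-1}(1-t)^{\alpha-1}\dd t .
\]
I expect the weight $(1-t)^{2\alpha-1}-t^{2\alpha-1}$ in the statement to emerge only after combining this with the $t\mapsto 1-t$ folding and the pairing of the two quadrant contributions. Alternatively, one can integrate by parts in $t$, using that $Q_\ell$ involves $\Phi_\ell$ evaluated at points whose squared differences are linear in $B$; this is the same structure as in Lemma~\ref{lem:app-two-interval}.

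The main obstacle is this bookkeeping: tracking orientation signs across quadrants and identifying the correct weight. If the direct flux computation does not visibly yield $(1-t)^{2\alpha-1}-t^{2\alpha-1}$, my fallback is to integrate by parts in $t$. That moves a derivative from the density $t^{\alpha-1}(1-t)^{\alpha-1}$ (via its antiderivative) onto $\Phi_\ell$, with boundary terms cancelling at $t=1/2$ by symmetry and at $t=0$ because $\Phi_\ell(\pm B)$ appears with both signs. The differentiability issues (the kink of $\Phi$ at $|s|=\rho$ and the boundary behaviour at the axes) are harmless: $\Phi$ is Lipschitz, and for $p>1$ the singular factors $t^{\alpha-1}$ are integrable.
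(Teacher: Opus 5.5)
Your overall route is the paper's. You differentiate under the integral, write the integrand as $-\operatorname{div}\bigl(\Phi(\langle\theta,x\rangle)Jx\bigr)$ (the paper uses $V=(x_2,-x_1)=-Jx$), and pass to the boundary flux. You then pair opposite quadrants by central symmetry, rescale by $a$, and fold $t\mapsto1-t$.

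The gap is in the flux density, which is the one step that carries the content. Your claim that $\langle Jx,n\rangle\,\dd S$ equals $\pm(x_1\dd x_2-x_2\dd x_1)$ is false. That form is the tangential component $\langle Jx,\tau\rangle\,\dd S$, equivalently the normal flux of the radial field $x$. For counterclockwise orientation $n\,\dd S=(\dd x_2,-\dd x_1)$, hence
\[
 \langle Jx,n\rangle\,\dd S=-(x_1\dd x_1+x_2\dd x_2)=-\tfrac12\,\dd\bigl(|x|^2\bigr).
\]
On the arc $x=(t^\alpha,(1-t)^\alpha)$ this gives $\tfrac12\,\dd|x|^2=\alpha\bigl(t^{2\alpha-1}-(1-t)^{2\alpha-1}\bigr)\dd t$. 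This is the paper's $(-uu'-s)\dd s$, obtained from $n\,\dd\sigma=(-u',1)\dd s$, and it produces the stated weight at once. It is also this exact form that makes your sign reversal between the first and fourth quadrants correct, since counterclockwise traversal runs through $t$ in opposite directions on the two arcs. The area form $x_1\dd x_2-x_2\dd x_1$ is positive on every counterclockwise arc and would give no such reversal.

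Neither of your proposed remedies can rescue the weight $t^{\alpha-1}(1-t)^{\alpha-1}$.
\begin{itemize}
\item \emph{Folding.} This weight is symmetric under $t\mapsto1-t$, so folding turns $\int_0^1 w f$ into $\int_0^{1/2}w(t)\bigl(f(t)+f(1-t)\bigr)\dd t$. It can never generate the antisymmetric weight $(1-t)^{2\alpha-1}-t^{2\alpha-1}$. The difference $Q_\ell(t^\alpha,(1-t)^\alpha)-Q_\ell((1-t)^\alpha,t^\alpha)$ in the statement arises precisely because the true weight is antisymmetric.
\item \emph{Integration by parts.} Your boundary integral is a different quantity altogether. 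Up to sign, Green's theorem shows it equals $\int_{B_p^2}\bigl(2\Phi(\langle\theta,x\rangle)+\langle\theta,x\rangle\Phi'(\langle\theta,x\rangle)\bigr)\dd x$. This is the derivative of $\int_{\lambda B_p^2}\Phi(\langle\theta,x\rangle)\dd x$ at $\lambda=1$, a dilation derivative rather than a rotation derivative.
\end{itemize}

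The disc makes this concrete. At $p=2$ rotation invariance forces $F_\rho'=0$, and the correct weight $(1-t)^0-t^0$ vanishes identically. Your weight, by contrast, has constant sign and gives a nonzero flux whenever $\rho>0$.
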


\begin{proof}
Let $V(x_1,x_2)=(x_2,-x_1)$. Then $\operatorname{div}V=0$ and
\[
 V\cdot\nabla\langle\theta(\varphi),x\rangle=\langle\theta'(\varphi),x\rangle.
\]
A standard Lipschitz approximation justifies applying the divergence theorem to $(\rho^2-\langle\theta,x\rangle^2)_+$. On the positive boundary arc $x=(s,u(s))$, the outward normal measure is $(-u'(s),1)\dd s$, so
\[
 V\cdot n\,\dd\sigma=(-u(s)u'(s)-s)\dd s.
\]
Summing over the four signed arcs and using evenness gives
\[
 -F_\rho'(\varphi)
 =2a^2\int_0^1(-uu'-s)
 \left[\Phi_\ell(Bu-s)-\Phi_\ell(Bu+s)\right]\dd s.
\]
Now set $t=s^p$. Then $s=t^\alpha$, $u=(1-t)^\alpha$, and
\[
 (-uu'-s)\dd s
 =\alpha\big((1-t)^{2\alpha-1}-t^{2\alpha-1}\big)\dd t.
\]
Pairing $t$ with $1-t$ gives \eqref{eq:app-base-boundary}.
\end{proof}

\begin{proposition}[Two-dimensional hinge comparison]\label{prop:app-base2}
Let $X^{(2)}$ be uniform on $B_p^2$, $1<p<2$. If $\theta,\eta\in\R^2$ have the same Euclidean norm and
\[
 (\theta_1^2,\theta_2^2)\succ(\eta_1^2,\eta_2^2),
\]
then
\[
 \langle\eta,X^{(2)}\rangle^2\cx\langle\theta,X^{(2)}\rangle^2.
\]
\end{proposition}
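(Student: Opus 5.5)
By the symmetries of $B_p^2$ (coordinate sign changes and the swap $x_1\leftrightarrow x_2$), the law of $\langle\theta,X^{(2)}\rangle^2$ depends only on the unordered pair $\{\theta_1^2,\theta_2^2\}$. We may therefore assume $\theta_1\ge\theta_2\ge0$ and $\eta_1\ge\eta_2\ge0$. Write $\sigma=\|\theta\|_2^2=\|\eta\|_2^2$. If $\sigma=0$ both variables vanish and there is nothing to prove, so assume $\sigma>0$. Then $\theta=\theta(\varphi_\theta)$ and $\eta=\theta(\varphi_\eta)$ in the parametrization of Lemma~\ref{lem:app-base-boundary}, with angles in $[0,\pi/4]$. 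In two coordinates the majorization $(\theta_1^2,\theta_2^2)\succ(\eta_1^2,\eta_2^2)$ with equal sums means exactly $\theta_1^2\ge\eta_1^2$, that is, $\varphi_\theta\le\varphi_\eta$.

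Both squared projections are nonnegative, integrable (bounded), and have the same mean, since $\E\langle\theta,X^{(2)}\rangle^2=\sigma\,\E (X^{(2)}_1)^2$ by unconditionality and exchangeability. By Lemma~\ref{lem:app-stoploss} it therefore suffices to show, for each $\rho\ge0$ (testing $a=\rho^2$), that
\[
 \E(\rho^2-\langle\eta,X^{(2)}\rangle^2)_+\le\E(\rho^2-\langle\theta,X^{(2)}\rangle^2)_+ .
\]
Up to the constant factor $1/\vol(B_p^2)$, this is the inequality $F_\rho(\varphi_\eta)\le F_\rho(\varphi_\theta)$. So the whole statement reduces to showing that $F_\rho$ is nonincreasing on $[0,\pi/4]$.

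To prove this monotonicity we apply Lemma~\ref{lem:app-base-boundary} at each $\varphi\in[0,\pi/4)$, where $a>0$ and $0\le B=\tan\varphi\le1$. In the integrand on $[0,1/2]$, the weight $(1-t)^{2\alpha-1}-t^{2\alpha-1}$ is nonnegative because $2\alpha-1>0$ and $1-t\ge t$. For the bracket, put $x=t^\alpha$ and $y=(1-t)^\alpha$; then $0\le x\le y$ since $t\le 1/2$. Lemma~\ref{lem:app-two-interval} gives $Q_\ell(x,y)-Q_\ell(y,x)\ge0$, so the bracket is nonnegative. Hence $-F_\rho'(\varphi)\ge0$ on $[0,\pi/4)$. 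Since $F_\rho$ is continuous on $[0,\pi/4]$ (dominated convergence, the integrand being bounded by $\rho^2$ on a bounded set), integrating the derivative gives $F_\rho(\varphi_\eta)\le F_\rho(\varphi_\theta)$.

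The main point to check is the applicability of Lemma~\ref{lem:app-base-boundary}. It needs $F_\rho$ to be differentiable in $\varphi$, and the divergence-theorem step to be valid for the merely Lipschitz hinge integrand. The paper asserts this via Lipschitz approximation. If more care is needed, a robust alternative is to note that $\varphi\mapsto F_\rho(\varphi)$ is Lipschitz, because $\theta(\varphi)$ is smooth and the hinge is Lipschitz in $\theta$ on a bounded domain. Then $F_\rho$ is absolutely continuous, the formula holds almost everywhere, and a nonnegative derivative a.e. suffices. The endpoint $\varphi=\pi/4$ (where $B=1$) is covered by continuity.
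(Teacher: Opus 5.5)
Your proposal is correct and follows the paper's proof essentially step for step: the symmetry reduction to $\theta(\varphi)$ with $0\le\varphi\le\pi/4$, equal means plus Lemma~\ref{lem:app-stoploss}, and monotonicity of $F_\rho$ via Lemma~\ref{lem:app-base-boundary}, with Lemma~\ref{lem:app-two-interval} and the sign of $(1-t)^{2\alpha-1}-t^{2\alpha-1}$ giving $-F_\rho'\ge0$. Your Lipschitz and absolute-continuity remark is a sound way to make rigorous the paper's brief ``by limiting'' treatment of the degenerate angles and of the differentiability step.
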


\begin{proof}
By unconditionality and permutation symmetry, it is enough to consider
\[
 \theta(\varphi)=ae_1+be_2,
 \qquad
 a=\sqrt\sigma\cos\varphi,
 \qquad
 b=\sqrt\sigma\sin\varphi,
 \qquad
 0\le\varphi\le\pi/4.
\]
Increasing $\varphi$ balances the squared coordinates. For $\rho>0$ define
\[
 F_\rho(\varphi)=\int_{B_p^2}(\rho^2-\langle\theta(\varphi),x\rangle^2)_+\,\dd x.
\]
We show that $F_\rho$ is nonincreasing in $\varphi$. If $a=b$ or $b=0$, this follows by limiting. Otherwise Lemma~\ref{lem:app-base-boundary} applies. For $0\le t\le1/2$, set $x=t^\alpha$ and $y=(1-t)^\alpha$. Then $0\le x\le y$, so Lemma~\ref{lem:app-two-interval} gives
\[
 Q_\ell(x,y)\ge Q_\ell(y,x).
\]
Also $(1-t)^{2\alpha-1}-t^{2\alpha-1}\ge0$ because $2\alpha-1>0$. Hence $-F_\rho'(\varphi)\ge0$.

The means are equal along the path, since
\[
 \E\langle\theta(\varphi),X^{(2)}\rangle^2
 =\E\bigl[(X_1^{(2)})^2\bigr](a^2+b^2).
\]
Lemma~\ref{lem:app-stoploss} gives the desired convex order.
\end{proof}
\section{Beta--Rademacher preservation}\label{sec:preservation}

Throughout this section let $1/2<\alpha<1$ and set $p=1/\alpha$.
We prove Theorem~\ref{thm:app-append}. For a hinge level $\rho>0$ and
$q\ge0$, condition on an input satisfying $Y^2=q$ and define the
conditional lower-hinge transform
\[
 G(q)=\E_{T,\eps}\left(
 \rho^2-\left((1-T)^\alpha\sqrt q+cT^\alpha\eps\right)^2
 \right)_+.
\]
The preservation theorem follows once $G$ is shown to be convex. A
distributional second-derivative formula reduces this convexity to a
weighted layer inequality. We first record that formula, then establish
the required centered and dual layer inequalities.

\subsection{A distributional second-derivative formula}

\begin{lemma}[Quadratic layer second derivative]\label{lem:app-second-dist}
Let $L,D,W$ be $C^1$ functions on an interval
$I\subset(0,\infty)$, with $L>0$, $D\ge0$, and $W\ge0$. Assume that
the measures and integrals below are locally finite. Define, for $x>0$,
\[
 F(x)=\int_I W(R)\frac12\Big(
 (L(R)^2-(x+D(R))^2)_+
 +(L(R)^2-(x-D(R))^2)_+
 \Big)\,\dd R,
\]
and set $G(q)=F(\sqrt q)$ for $q>0$. Let
\[
 \nu_-=(|L-D|)_\#(W L\,\dd R),
 \qquad
 \nu_+=(L+D)_\#(W L\,\dd R)
\]
be the pushforward measures on $(0,\infty)$. Then, as a signed
distribution in the variable $x>0$,
\begin{equation}\label{eq:app-dist-measure}
 xF''-F'
 =
 x(\nu_-+\nu_+)
 -
 \left(
 \int_{\{|L-D|<x<L+D\}}W(R)D(R)\,\dd R
 \right)\dd x.
\end{equation}
Consequently, at every common regular value $x>0$ of $|L-D|$ and
$L+D$,
\begin{align}\label{eq:app-dist-lemma}
4x^3G''(x^2)=&\
 x\sum_{|L-D|=x}\frac{W(R)L(R)}{|(L-D)'(R)|}
 +x\sum_{L+D=x}\frac{W(R)L(R)}{|(L+D)'(R)|}\notag\\
&\quad
 -\int_{\{|L-D|<x<L+D\}}W(R)D(R)\,\dd R.
\end{align}
In particular, critical or colliding roots are already accounted for
by the pushforward measures in \eqref{eq:app-dist-measure}; no
separate choice of root branches is required.
\end{lemma}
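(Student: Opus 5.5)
The plan is to compute the distributional derivatives of $F$ by differentiating under the integral in the $x$ variable, one quadratic layer at a time. For fixed $R$, put $h_\pm(x)=\tfrac12(L^2-(x\pm D)^2)_+$, with $L=L(R)$ and $D=D(R)$. Each $h_\pm$ is a truncated concave parabola in $x$. Its first derivative is $-(x\pm D)\1_{\{|x\pm D|<L\}}$, and its second distributional derivative is $-\1_{\{|x\pm D|<L\}}\dd x$ plus point masses at the endpoints where the truncation switches on.

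At an endpoint $x_0$, the derivative jumps from $0$ to $-(x_0\pm D)$, or back to $0$. Since $|x_0\pm D|=L$ there, the jump has size exactly $L$ in the upward direction. We therefore restrict to $x>0$ and track which endpoints lie in $(0,\infty)$.

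\emph{Plus layer.} Its support is $-D-L<x<L-D$, so its only positive endpoint is $x=L-D$, when $L>D$.

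\emph{Minus layer.} Its support is $D-L<x<D+L$, with positive endpoints $x=D+L$ and, if $D>L$, also $x=D-L$.

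The next step is to combine the two layers. For $x>0$, the sum of the two indicators is what matters, together with the identity
\[
 -(x+D)\1_{\{x<L-D\}}-(x-D)\1_{\{|x-D|<L\}}.
\]
Collecting point masses, each $R$ contributes $\tfrac12 L\,\delta_{|L-D|}$ and $\tfrac12 L\,\delta_{L+D}$ to $F''$. When $L<D$, the endpoint at $D-L$ should count fully, and I would check the factor of $\tfrac12$ against the stated normalization. After that, $F'$ and the absolutely continuous part of $F''$ are formed, and the combination $xF''-F'$ is computed. In the region $x<|L-D|$ with $L>D$, both layers are active and
\[
 xF''-F'=x(-1)-\bigl(-(x+D)-(x-D)\bigr)\cdot\tfrac12\cdot\ldots
\]
The point of forming $xF''-F'$ is that the linear terms cancel. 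This leaves, per layer, $\tfrac12\bigl(-x+(x\pm D)\bigr)=\pm D/2$, so in the doubly active region the contributions cancel. In the singly active region $|L-D|<x<L+D$, a net $-D$ survives after the normalization, which yields the integral term in \eqref{eq:app-dist-measure}.

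Integrating these per-$R$ identities against $W(R)\dd R$ uses Fubini on distributions tested against $C_c^\infty(0,\infty)$ functions. This is justified by the local finiteness assumption. The point-mass contributions become exactly the pushforwards $\nu_\pm$ of $WL\,\dd R$, including at critical or colliding roots, since pushforward requires no root selection.

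For \eqref{eq:app-dist-lemma}, use $G(q)=F(\sqrt q)$. This gives $G'(q)=F'(x)/(2x)$ and $G''(q)=(xF''-F')/(4x^3)$ with $x=\sqrt q$, so $4x^3G''(x^2)=xF''-F'$. At a common regular value, the coarea formula gives the densities of $\nu_\pm$ as the sums over roots of $WL/|(L\mp D)'|$, which is the stated formula.

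The main obstacle is the careful bookkeeping of signs and of the factor $\tfrac12$ in the region $D>L$, where $|L-D|$ arises from the minus layer rather than the plus layer. I would first verify the whole computation on a single atom $W=\delta_{R_0}$ in both regimes $L\gtrless D$ before integrating.
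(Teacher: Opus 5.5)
Your route is the paper's own. You compute per-layer distributional derivatives of the truncated parabolas, use the cancellation of the linear terms in $xF''-F'$, apply Fubini against $W(R)\,\dd R$, read the atoms as the pushforwards $\nu_\pm$, use coarea at regular values, and finish with $4x^3G''(x^2)=xF''-F'$.

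The problem is the factor $\tfrac12$. You flag it but leave it unresolved, and as written it produces a different identity. You correctly found that each $h_\pm=\tfrac12(L^2-(x\pm D)^2)_+$ has derivative jumps of $+L$ at its endpoints, so
\[
 h_\pm''=-\1_{\{|x\pm D|<L\}}+L\,\delta_{\mp D-L}+L\,\delta_{\mp D+L}.
\]
The halving is already built into $h_\pm$. So, per $R$ before integrating against $W(R)\,\dd R$, the atoms of $F''$ on $(0,\infty)$ are $L\,\delta_{|L-D|}+L\,\delta_{L+D}$, not $\tfrac12 L$ each.

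Likewise, $xh_\pm''-h_\pm'=-x+(x\pm D)=\pm D$ on the active set of $h_\pm$, not $\pm D/2$. With these values, $+D-D=0$ on $0<x<L-D$. On $|L-D|<x<L+D$ only $h_-$ is active, in either regime, and it leaves $-D$. This is exactly \eqref{eq:app-dist-measure}.

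Your written version has atoms $\tfrac12 L$ but a surviving bulk of $-D$. That is internally inconsistent: it would give $\tfrac12\,x(\nu_-+\nu_+)$ against the full bulk term. The discrepancy is not cosmetic. The only use of the lemma, in the proof of Theorem~\ref{thm:app-append}, is to compare the atomic part with the bulk part, so a factor of $2$ between them changes what must be proved.

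There is also no asymmetry between $L>D$ and $L<D$; the $|L-D|$ atom carries full mass $L$ in both regimes. It is the right endpoint of $h_+$ when $L>D$ and the left endpoint of $h_-$ when $D>L$. When $L\ne D$, exactly one of $\delta_{L-D}$ and $\delta_{D-L}$ lies in $(0,\infty)$. When $L=D$, neither does, which is consistent with $\nu_-$ living on $(0,\infty)$.

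With these corrections your single-atom check closes, and the rest of the proposal (Fubini, coarea, and the chain rule for $q=x^2$) goes through as in the paper.
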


\begin{proof}
For fixed $a\in\R$ and $L>0$, put
\[
 \phi_{L,a}(x)=(L^2-(x-a)^2)_+.
\]
As distributions in $x$,
\[
 \phi_{L,a}'=-2(x-a)\1_{\{|x-a|<L\}},
\]
and
\[
 \phi_{L,a}''
 =
 -2\1_{\{|x-a|<L\}}
 +2L\delta_{a-L}
 +2L\delta_{a+L}.
\]
It follows that
\[
 x\phi_{L,a}''-\phi_{L,a}'
 =
 -2a\1_{\{|x-a|<L\}}
 +2xL(\delta_{a-L}+\delta_{a+L}).
\]
Apply this identity with $a=D(R)$ and $a=-D(R)$, average, integrate
against $W(R)\dd R$, and use Fubini's theorem for distributions. On
the positive half-line, the endpoint Dirac masses push forward to
$|L-D|$ and $L+D$, while the difference of the two indicator terms is
\[
 -D(R)\1_{\{|L(R)-D(R)|<x<L(R)+D(R)\}}.
\]
This proves \eqref{eq:app-dist-measure}.

At a regular value, the one-dimensional coarea formula gives
\[
 \frac{\dd\nu_-}{\dd x}
 =
 \sum_{|L-D|=x}\frac{W(R)L(R)}{|(L-D)'(R)|},
 \qquad
 \frac{\dd\nu_+}{\dd x}
 =
 \sum_{L+D=x}\frac{W(R)L(R)}{|(L+D)'(R)|}.
\]
Finally, the change of variables $q=x^2$ gives, in the smooth case,
\[
 G''(x^2)=\frac{xF''(x)-F'(x)}{4x^3},
\]
and the same identity holds distributionally by testing against
compactly supported smooth functions. This yields
\eqref{eq:app-dist-lemma}.
\end{proof}

\subsection{The centered layer inequality}

The next estimate compares the endpoint contribution of a one-dimensional
layer with its weighted bulk. The exponent condition
$\delta\ge3\alpha$ is exactly the condition that will arise from
$\beta\ge1+2\alpha$ in the preservation theorem. The component
estimates use the following weighted trapezoid inequality.

\begin{lemma}[Weighted beta-trapezoid]\label{lem:app-beta-trap}
Let $1/2<\alpha<1$ and let $\delta\ge3\alpha$. Then for $0\le a<b\le1$,
\begin{equation}\label{eq:app-beta-trap}
 \alpha\int_a^b t^{2\alpha-1}(1-t)^\delta\,\dd t
 \le
 \frac{b^{2\alpha}-a^{2\alpha}}4\left((1-a)^\delta+(1-b)^\delta\right).
\end{equation}
\end{lemma}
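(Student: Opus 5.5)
The plan is to reduce \eqref{eq:app-beta-trap} to the trapezoid rule for a convex function, using a change of variables that absorbs the weight $t^{2\alpha-1}$. Put $s=t^{2\alpha}$, so that $\dd s=2\alpha t^{2\alpha-1}\,\dd t$ and $t=s^\gamma$ with
\[
 \gamma=\frac1{2\alpha}\in\left(\tfrac12,1\right).
\]
Writing $a'=a^{2\alpha}$, $b'=b^{2\alpha}$ (so $0\le a'<b'\le1$) and $g(s)=(1-s^\gamma)^\delta$, the left side of \eqref{eq:app-beta-trap} becomes $\frac12\int_{a'}^{b'}g(s)\,\dd s$. The right side is $\frac{b'-a'}4\bigl(g(a')+g(b')\bigr)$. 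After multiplying by $2$, the claim reads
\[
 \int_{a'}^{b'}g(s)\,\dd s\le\frac{b'-a'}2\bigl(g(a')+g(b')\bigr).
\]
This is exactly the statement that the trapezoid rule overestimates the integral. That holds whenever $g$ is convex on $[a',b']$, because a convex function lies below its chord.

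The only real step is therefore to verify that $g$ is convex on $(0,1)$; convexity on the closed interval then follows by continuity. Differentiating gives
\[
 g'(s)=-\delta\gamma s^{\gamma-1}(1-s^\gamma)^{\delta-1},
\]
and differentiating once more,
\[
 g''(s)=\delta\gamma\, s^{\gamma-2}(1-s^\gamma)^{\delta-2}\Bigl[(1-\gamma)(1-s^\gamma)+(\delta-1)\gamma s^\gamma\Bigr].
\]
On $(0,1)$ the prefactor is positive. The bracket is nonnegative because $\gamma<1$ (that is, $\alpha>1/2$) and $\delta\ge1$. The latter holds since $\delta\ge3\alpha>3/2$. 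Hence $g''\ge0$ and $g$ is convex.

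The main point to check carefully is the boundary behaviour. At $s=0$ the factor $s^{\gamma-2}$ blows up, but $g$ is continuous on $[0,1]$ and convex on the open interval, so the chord inequality still applies on $[a',b']$ even when $a=0$. The exponent $\delta-2$ may be negative, but it only affects behaviour at $s=1$, where $g$ remains continuous because $\delta>0$. Once this is settled, the inequality follows immediately. In fact the argument needs only $\delta\ge1$, which is weaker than the stated hypothesis $\delta\ge3\alpha$.
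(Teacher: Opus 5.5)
Your proof is correct and follows the paper's argument: the substitution $s=t^{2\alpha}$ turns the left side into $\frac12\int_{a^{2\alpha}}^{b^{2\alpha}}(1-s^{1/(2\alpha)})^\delta\,\dd s$, and the trapezoid rule for this convex integrand gives the claim. Your second-derivative bracket equals the paper's $1-\mu+(\mu\delta-1)y^\mu$, and your remark that $\delta\ge1$ already suffices is a valid sharpening of the paper's use of $\mu\delta\ge3/2$, since the bracket is affine in $s^\gamma$ and takes the value $\gamma(\delta-1)$ at $s=1$.
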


\begin{proof}
Put $y=t^{2\alpha}$ and $\mu=1/(2\alpha)\in(1/2,1)$. The integral on the left of \eqref{eq:app-beta-trap} is
\[
 \frac12\int_{a^{2\alpha}}^{b^{2\alpha}}(1-y^\mu)^\delta\,\dd y.
\]
For $F(y)=(1-y^\mu)^\delta$,
\[
 F''(y)=\delta\mu y^{\mu-2}(1-y^\mu)^{\delta-2}\bigl(1-\mu+(\mu\delta-1)y^\mu\bigr).
\]
Since $\delta\ge3\alpha$, $\mu\delta\ge3/2>1$, so $F$ is convex. The trapezoid rule for convex functions gives the claim.
\end{proof}

For $B>0$ set
\[
 u(s)=(1-s^p)^\alpha,
 \qquad
 H(s)=Bu(s)+s,
 \qquad
 G(s)=Bu(s)-s,
 \qquad 0\le s\le1.
\]

\begin{lemma}[Root configurations]\label{lem:app-root-config}
The function $G$ is strictly decreasing on $[0,1]$. The function $H$ has at most one critical point; it is increasing before this point and decreasing after it. Consequently, for a fixed $L>0$, every connected component of
\[
 \mathcal A_L=\{s\in[0,1]: |G(s)|<L<H(s)\}
\]
is central, mixed, or cap, in the following sense.
\begin{enumerate}[label=(\roman*),leftmargin=2em]
\item If $0<L<1$, then either the component is central, opened by $G=L$ and closed by $G=-L$, or it is mixed, opened by $H=L$ and closed by $G=-L$.
\item If $L>1$, then either it is opened by an increasing-branch root of $H=L$ and closed by a decreasing-branch root of $H=L$, or it is opened by $G=L$ and closed by a decreasing-branch root of $H=L$.
\item The cases $L=1$, endpoint roots, and tangencies are limits of the preceding cases.
\end{enumerate}
\end{lemma}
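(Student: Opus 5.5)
I will work directly with the derivatives of $u(s)=(1-s^p)^\alpha$ on $(0,1)$. Since
\[
 u'(s)=-\alpha p\,s^{p-1}(1-s^p)^{\alpha-1}=-s^{p-1}(1-s^p)^{\alpha-1}\le0,
\]
we get $G'(s)=Bu'(s)-1<0$. Hence $G$ is strictly decreasing, running from $G(0)=B$ to $G(1)=-1$. For $H$, we have $H'(s)=1+Bu'(s)=1-Bs^{p-1}(1-s^p)^{\alpha-1}$. The key observation is that the map $\psi(s)=s^{p-1}(1-s^p)^{\alpha-1}$ is strictly increasing on $(0,1)$. Indeed, $p-1>0$ makes the first factor increasing, and $\alpha-1<0$ makes the second factor increasing as well. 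Moreover $\psi(0^+)=0$ and $\psi(1^-)=+\infty$. Therefore $H'$ is strictly decreasing, starts at $1$, and tends to $-\infty$. So $H'$ has exactly one zero $s_*\in(0,1)$, and $H$ increases on $[0,s_*]$ and decreases on $[s_*,1]$. (If one prefers "at most one critical point" including degenerate endpoint behaviour, the same monotonicity of $\psi$ gives it.) I also record the endpoint values $H(0)=B$ and $H(1)=1$, together with $H\ge |G|$ everywhere, since $Bu\ge0$ and $s\ge0$.

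Next I classify components of $\mathcal A_L$. The set $\mathcal A_L$ is the intersection of $\{|G|<L\}$ with $\{H>L\}$. Because $G$ is strictly decreasing, $\{|G|<L\}$ is a single interval, namely $\{s: -L<G(s)<L\}$. Its left end is either $0$ (when $B<L$) or the unique root of $G=L$, and its right end is either the unique root of $G=-L$ (when $L<1$) or $1$ (when $L\ge1$). Because $H$ is unimodal, $\{H>L\}$ is a single interval as well. Its left end is $0$ or an increasing-branch root of $H=L$, and its right end is $1$ or a decreasing-branch root. So $\mathcal A_L$ is an interval, hence has at most one component. Each end of the component is the larger of the two left ends, respectively the smaller of the two right ends.

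Case (i), $0<L<1$. Here $H(1)=1>L$, so $\{H>L\}$ extends to $s=1$, and its right end is not a root. The right end of $\mathcal A_L$ is therefore the root of $G=-L$, which exists since $G(1)=-1<-L<G(0)$. For the left end there are two possibilities. If it is the root of $G=L$, the component is central. If instead it is an increasing-branch root of $H=L$, the component is mixed. If neither exists, the left end is $s=0$, which is an endpoint-root limit case.

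Case (ii), $L>1$. Now $|G|\le\max(B,1)$, and $G>-1>-L$ on $[0,1)$, so the right end is never produced by $G=-L$. Since $H(1)=1<L$, the right end is a decreasing-branch root of $H=L$. The left end is either the increasing-branch root of $H=L$ or the root of $G=L$. These are exactly the two configurations listed in the statement.

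The only delicate step is (iii): identifying the degenerate cases as limits. These are $L=1$, roots located at $s=0$ or $s=1$, and tangency $L=H(s_*)$, where the component collapses to a point. I would handle them by noting that all endpoints depend continuously on $L$ and $B$. Each endpoint is a root of a strictly monotone function, or of $H$ on a branch where $H$ is strictly monotone. So these cases arise as limits of the generic configurations, which is all the statement asserts. The main obstacle in the plan is the monotonicity of $\psi$, which is where $1<p<2$ and $\alpha<1$ enter; everything else is interval bookkeeping.
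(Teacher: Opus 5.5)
Your proposal is correct and follows the paper's own argument: differentiate $u$, use that $s\mapsto s^{p-1}(1-s^p)^{\alpha-1}$ increases strictly from $0$ to $+\infty$, and intersect the interval $\{|G|<L\}$ with the superlevel interval $\{H>L\}$. Your endpoint values $G(0)=H(0)=B$, $G(1)=-1$, $H(1)=1$ make explicit the bookkeeping the paper leaves implicit, and they show along the way that $\mathcal A_L$ is actually a single interval.
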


\begin{proof}
Differentiating gives
\[
 u'(s)=-s^{p-1}(1-s^p)^{\alpha-1}.
\]
Thus $G'(s)=Bu'(s)-1<0$. Also
\[
 H'(s)=1-Bs^{p-1}(1-s^p)^{\alpha-1}.
\]
The positive function $s\mapsto s^{p-1}(1-s^p)^{\alpha-1}$ is strictly increasing from $0$ to $+\infty$, so $H'$ changes sign at most once. The configurations follow by intersecting $-L<G<L$ and $H>L$ with these monotonicity facts.
\end{proof}

\begin{theorem}[Weighted centered layer theorem]\label{thm:app-wcl}
Let $1/2<\alpha<1$, $\delta\ge3\alpha$, $B>0$, and $L>0$.
Assume that all roots in $(0,1)$ of $|G|=L$ and $H=L$ are simple and
that no such root lies at an endpoint. Then
\begin{equation}\label{eq:app-wcl}
 LB\sum_{|G|=L}\frac{(1-s^p)^{\delta+\alpha}}{|G'(s)|}
 +LB\sum_{H=L}\frac{(1-s^p)^{\delta+\alpha}}{|H'(s)|}
 \ge
 \int_{\{|G|<L<H\}}s(1-s^p)^\delta\,\dd s.
\end{equation}
\end{theorem}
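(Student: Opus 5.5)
The plan is to prove \eqref{eq:app-wcl} one connected component of $\mathcal A_L$ at a time, using Lemma~\ref{lem:app-root-config}. Each component $(s_0,s_1)$ is bounded by two roots: the opening root $s_0$ and the closing root $s_1$. Each of these contributes one boundary term on the left of \eqref{eq:app-wcl}, and distinct components use distinct roots. It therefore suffices to show, for each component,
\[
 LB\left(\frac{(1-s_0^p)^{\delta+\alpha}}{|\Gamma_0'(s_0)|}+\frac{(1-s_1^p)^{\delta+\alpha}}{|\Gamma_1'(s_1)|}\right)\ge\int_{s_0}^{s_1}s(1-s^p)^\delta\,\dd s ,
\]
where $\Gamma_0,\Gamma_1\in\{G,-G,H\}$ are the functions vanishing-at-level-$L$ at the two ends.

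To bring in Lemma~\ref{lem:app-beta-trap}, substitute $t=s^p$. This gives $s\,\dd s=\alpha t^{2\alpha-1}\dd t$, so the right side is exactly $\alpha\int_{a}^{b}t^{2\alpha-1}(1-t)^\delta\dd t$ with $a=s_0^p$ and $b=s_1^p$. Since $\delta\ge3\alpha$, the lemma bounds it by
\[
 \tfrac14(s_1^2-s_0^2)\bigl((1-a)^\delta+(1-b)^\delta\bigr).
\]
It then suffices to dominate this trapezoid endpoint by endpoint. Concretely, I aim to show that each boundary term satisfies
\[
 \frac{LB(1-t_i)^{\delta+\alpha}}{|\Gamma_i'(s_i)|}\ge\tfrac14(s_1^2-s_0^2)(1-t_i)^\delta ,
\]
that is,
\[
 4LBu(s_i)\ge(s_1^2-s_0^2)\,|\Gamma_i'(s_i)| .
\]
If this strong pointwise form fails in some configuration, I would fall back on a convex-combination split of the trapezoid weights between the two endpoints.

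The endpoint inequalities should follow from the level identities at the roots. In the central case, $Bu(s_0)-s_0=L$ and $Bu(s_1)-s_1=-L$. Subtracting gives $s_1-s_0=2L-B(u(s_0)-u(s_1))\le 2L$, and $s_1+s_0\le Bu(s_0)+Bu(s_1)$. Hence $s_1^2-s_0^2\le 2L\,B(u(s_0)+u(s_1))$. Meanwhile $|G'(s)|=1+Bs^{p-1}u^{1-p}$, so the required bound becomes an inequality of the form $\tfrac{u(s_i)}{|G'(s_i)|}$ versus a share of $u(s_0)+u(s_1)$. I will handle this using the monotonicity of $u$ and of $s^{p-1}u^{1-p}$. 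The mixed and cap cases are treated similarly with $H=L$ at one or both ends. There one uses $s_1-s_0\le$ a quantity controlled by the level differences: for two $H$-roots, $B(u(s_0)-u(s_1))=s_1-s_0$. The identity $H-G=2s$ gives $s=(H-G)/2$, which bounds $s_1+s_0$ and $s_1-s_0$ in terms of $L$ and $Bu$.

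The main obstacle I expect is the case where the derivative $|\Gamma_i'|$ is large. This happens near $s=1$, where $u'$ blows up, and near a tangency of $H$, where $H'$ is small but the interval is also short. The pointwise endpoint split may then be too crude, because one endpoint's trapezoid weight $(1-t_i)^\delta$ is tiny while the other's is not. In that situation I would first try to pair the large-derivative endpoint's shortfall against the other endpoint's surplus, since $(1-t)^{\delta}$ decreases monotonically. If that fails, I would apply Lemma~\ref{lem:app-beta-trap} on a subdivided interval at an interior point chosen where the bulk density $s(1-s^p)^\delta$ is balanced. Finally, the nondegeneracy hypotheses (simple roots, none at endpoints) ensure every component is of the types listed in Lemma~\ref{lem:app-root-config}(i)--(ii), so no limiting case is needed in this proof.
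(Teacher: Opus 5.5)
Your reduction to single components and your use of Lemma~\ref{lem:app-beta-trap} agree with the paper. The gap is in the step that carries all the difficulty: paying for the trapezoid bound with the two boundary terms. The primary route you propose for it fails in an essential way. The endpoint-by-endpoint inequality $4LBu(s_i)\ge(s_1^2-s_0^2)|\Gamma_i'(s_i)|$ fails at the closing root of central and mixed components generically, not only near $s=1$ or near a tangency of $H$. To see this, let $L\to B$ with $0<B<1$; from below this is a central component, from above a mixed one. In the limit the opening root is $s_0=0$ and the closing root solves $s_1=B(1+u(s_1))$. The closing requirement then reads $4u(s_1)\ge(1+u(s_1))^2|G'(s_1)|$. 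This is false, because $(1+u)^2\ge4u$ and $|G'(s)|=1+Bs^{p-1}u(s)^{1-p}>1$, and by continuity it stays false for nearby admissible $L$. Your central-case estimate $s_1-s_0\le2L$ loses even more: the sufficient condition it produces at the closing root, $2u(s_1)\ge(u(s_0)+u(s_1))|G'(s_1)|$, can never hold, since $u(s_1)<u(s_0)$.

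So surplus from the opening root must be transferred to the closing root. You mention this only as a fallback, but it is the whole content of the theorem, and it is tight. In the configuration above with $B$ small and $\eta=(2B)^p$, the opening surplus over its trapezoid share is about $\alpha\eta B^2$, while the closing deficit is about $\tfrac12\eta B^2$. After the trapezoid bound, the combined inequality therefore holds only by about $(\alpha-\tfrac12)\eta B^2$. Any transfer must use $\alpha>1/2$ sharply and uniformly over all root configurations. Subdividing the interval only sharpens the bulk bound; it still leaves a joint two-endpoint inequality to prove.

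The paper proves that joint inequality at $\delta=3\alpha$ in three steps. First it applies Cauchy--Schwarz, $\frac1{d_a}+\frac{w^{4\alpha+1}}{d_b}\ge\frac{(1+w^{2\alpha+1/2})^2}{d_a+d_b}$. Then it uses the compression estimates \eqref{eq:app-mixed-compress} and \eqref{eq:app-central-compress} for $d_a+d_b$; the central one requires the monotonicity analysis of $B_0$. Finally it applies Lemma~\ref{lem:app-poly} with $s=\sqrt w$ and $r=w^\alpha$. General $\delta\ge3\alpha$ follows by affinity in $z=w^\delta$, and the $z=0$ end is exactly the opening-root pointwise inequality. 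Cap components are not handled ``similarly'': the paper uses the opening root alone against the crude bound $\frac{\alpha}{\delta+1}(1-a)^{\delta+1}$ for the entire bulk. As it stands, your proposal does not establish the central and mixed cases.
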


We prove the theorem component by component. In all component calculations below we use the variable
\[
 t=s^p,
 \qquad s=t^\alpha,
 \qquad u(s)=(1-t)^\alpha.
\]
Thus letters $a,b$ denote $t$-coordinates of the opening and closing roots. At a root of $H=L$, respectively of $G=\pm L$, the coarea denominators in the original $s$-variable become
\begin{align}\label{eq:app-endpoint-denominators}
 \frac{(1-s^p)^{\delta+\alpha}}{|H'(s)|}
 &=\frac{(1-t)^{\delta+\alpha+1}}{|1-Lt^{1-\alpha}|},\notag\\
 \frac{(1-s^p)^{\delta+\alpha}}{|G'(s)|}
 &=\frac{(1-t)^{\delta+\alpha+1}}{|1\pm Lt^{1-\alpha}|},
\end{align}
where the sign is $+$ at a root of $G=L$ and $-$ at a root of $G=-L$. For example, if $H=L$, then
\[
 B(1-t)^\alpha+t^\alpha=L,
\]
so
\[
 H'(s)=1-Bt^{1-\alpha}(1-t)^{\alpha-1}
 =\frac{1-Lt^{1-\alpha}}{1-t}.
\]
The two formulas for $G$ are identical, using $B(1-t)^\alpha-t^\alpha=\pm L$.

\begin{lemma}[Universal polynomial inequality]\label{lem:app-poly}
Let $0<s<1$ and $s^2\le r\le s$. Then
\begin{equation}\label{eq:app-poly-ineq}
 (1+s^2)(1+r)(1+r^3)\le2(1+r^2s)^2.
\end{equation}
\end{lemma}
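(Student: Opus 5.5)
The plan is to exploit concavity in $s$ for fixed $r$, which reduces \eqref{eq:app-poly-ineq} to the two extreme cases $s=r$ and $s=\sqrt r$. Each of these is a one-variable polynomial inequality. Fix $r\in(0,1)$. The constraint $s^2\le r\le s$ is equivalent to $r\le s\le\sqrt r$. Consider
\[
 g(s)=2(1+r^2s)^2-(1+s^2)(1+r)(1+r^3),
 \qquad r\le s\le\sqrt r .
\]
This is a quadratic in $s$ with
\[
 g''(s)=4r^4-2(1+r)(1+r^3)<0,
\]
since $r<1$ gives $4r^4<2$. So $g$ is concave, and its minimum on $[r,\sqrt r]$ is attained at an endpoint. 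It therefore suffices to check $g(r)\ge0$ and $g(\sqrt r)\ge0$.

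\emph{Endpoint $s=r$.} Dividing by $1+r^3>0$, the claim becomes $(1+r^2)(1+r)\le 2(1+r^3)$. The difference factors as
\[
 2+2r^3-(1+r)(1+r^2)=1-r-r^2+r^3=(1-r)^2(1+r)\ge0 .
\]

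\emph{Endpoint $s=\sqrt r$.} Write $r=w^2$ and $s=w$ with $0<w<1$. We need $(1+w^2)^2(1+w^6)\le2(1+w^5)^2$. Expanding both sides, the difference is
\[
 P(w)=1-2w^2-w^4+4w^5-w^6-2w^8+w^{10}.
\]
This polynomial is palindromic. Divide by $w^5$ and set $z=w+w^{-1}$, so that $z\ge2$. Using
\[
 w^3+w^{-3}=z^3-3z,
 \qquad
 w^5+w^{-5}=z^5-5z^3+5z,
\]
we get
\[
 w^{-5}P(w)=z^5-7z^3+10z+4=(z-2)\bigl(z^4+2z^3-3z^2-6z-2\bigr).
\]
The quartic factor equals $6$ at $z=2$. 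Its derivative $4z^3+6z^2-6z-6$ is positive for $z\ge2$, so the quartic stays positive there. Hence $P(w)\ge0$.

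The only step needing real verification is this palindromic factorization at $s=\sqrt r$. It is exactly where the inequality becomes tight, with equality as $w\to1$, i.e.\ $z=2$. The concavity reduction and the case $s=r$ are routine.
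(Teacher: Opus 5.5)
Your proof is correct, apart from one slip, and it takes a genuinely different route from the paper.

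\textbf{The slip.} You justify concavity by saying ``$r<1$ gives $4r^4<2$''. That is false for $r>2^{-1/4}$, for instance $r=0.9$. The conclusion $g''<0$ still holds, because $(1+r)(1+r^3)\ge 1+r^4>2r^4$ for $0<r<1$. Equivalently, $g''(s)=2(r^4-r^3-r-1)<0$. Replace the justification with this. Everything else checks out: the expansion of $P$, the identities for $w^3+w^{-3}$ and $w^5+w^{-5}$ in terms of $z$, the factorization $(z-2)(z^4+2z^3-3z^2-6z-2)$, and the positivity of the quartic on $z\ge2$.

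\textbf{How the paper argues.} The paper fixes $s$ and treats the difference $F_s(r)$ as a quartic in $r$ on $[s^2,s]$.
It first computes $F_s(s)=(1-s)^2(1+s)^2(s^2-s+1)$. This is your endpoint $s=r$, rewritten using $1+s^3=(1+s)(1-s+s^2)$.
It then writes $F_s(r)-F_s(s)=(r-s)H_s(r)$ and proves $H_s\le 0$. To do so it reparametrizes $r=s^2+z(s-s^2)$ and exhibits nonnegative Bernstein coefficients for four auxiliary polynomials $P_0,\dots,P_3$, of degree up to $7$.
This yields more than the lemma needs: for each fixed $s$, the minimum over $r$ lies on the diagonal $r=s$. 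The price is a certificate that amounts to a computer-algebra check.

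\textbf{How your route differs.} You fix $r$ instead, so the difference is a concave quadratic in $s$. This reduces everything to the two boundary curves $r=s$ and $r=s^2$. On the second curve you get a palindromic degree-$10$ polynomial in $w=\sqrt r$, which $z=w+w^{-1}$ factors cleanly.
Your argument is shorter and can be checked by hand. It also makes clear that the inequality is tight only at the corner $s=r=1$, where the two boundary curves meet. The paper's route carries extra structural information that is not needed for the lemma itself.
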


\begin{proof}
Let
\[
 F_s(r)=2(1+r^2s)^2-(1+s^2)(1+r)(1+r^3).
\]
Then
\[
 F_s(s)=(1-s)^2(1+s)^2(s^2-s+1)\ge0.
\]
Write $r=s^2+z(s-s^2)$, $0\le z\le1$. Expanding in powers of $z$ gives
\[
 F_s(r)-F_s(s)=(r-s)H_s(r).
\]
Since $r-s\le0$, it is enough to prove $H_s(r)\le0$. In Bernstein form as a polynomial in $z$, the polynomial
\[
 -H_s(s^2+z(s-s^2))
\]
has coefficients
\[
 (1-s)P_0(s),\quad \frac{(1-s)(1+s)}3P_1(s),\quad
 \frac{1-s}{3}P_2(s),\quad (1-s)(1+s)P_3(s),
\]
where
\begin{align*}
P_0(s)&=s^7+2s^6+s^5-3s^3-s^2+s+1,\\
P_1(s)&=4s^5+3s^4-6s^2+3,\\
P_2(s)&=6s^5+9s^4-3s^3-8s^2+3s+3,\\
P_3(s)&=4s^3-3s^2+1.
\end{align*}
These four polynomials have nonnegative Bernstein coefficients on $[0,1]$:
\begin{align*}
P_0 &: 1,\frac87,\frac{26}{21},\frac65,\frac{33}{35},\frac37,0,2,\\
P_1 &: 3,3,\frac{12}{5},\frac65,0,4,\\
P_2 &: 3,\frac{18}{5},\frac{17}{5},\frac{21}{10},\frac65,10,\\
P_3 &: 1,1,0,2.
\end{align*}
Hence $-H_s\ge0$, so $F_s(r)\ge F_s(s)\ge0$.
\end{proof}

\begin{lemma}[Mixed component at the base exponent]\label{lem:app-mixed-component}
The estimate \eqref{eq:app-wcl} holds on every mixed component when $\delta=3\alpha$.
\end{lemma}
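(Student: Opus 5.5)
The plan is to localize \eqref{eq:app-wcl} to a single mixed component and reduce it to the trapezoid bound of Lemma~\ref{lem:app-beta-trap} plus the algebraic inequality of Lemma~\ref{lem:app-poly}. By Lemma~\ref{lem:app-root-config}(i), a mixed component occurs only for $0<L<1$. In the variable $t=s^p$ it is an interval $(a,b)$, opened at an increasing-branch root $t=a$ of $H=L$ and closed at a root $t=b$ of $G=-L$. Write $x=a^\alpha$, $y=b^\alpha$, $U=(1-a)^\alpha$, $V=(1-b)^\alpha$. The two root equations are
\[
 x+BU=L,\qquad y-BV=L .
\]
By \eqref{eq:app-endpoint-denominators}, the endpoint side of \eqref{eq:app-wcl} restricted to this component is
\[
 LB\,\frac{(1-a)^{4\alpha+1}}{1-La^{1-\alpha}}+LB\,\frac{(1-b)^{4\alpha+1}}{1-Lb^{1-\alpha}},
\]
using $\delta=3\alpha$. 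The denominators are positive: the first because $a$ lies on the increasing branch of $H$, and the second because $G'<0$. The bulk side is $\alpha\int_a^b t^{2\alpha-1}(1-t)^{3\alpha}\dd t$.

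\textbf{Step 1: bound the bulk by the trapezoid.} Since $\delta=3\alpha$, Lemma~\ref{lem:app-beta-trap} applies and bounds the bulk by
\[
 \frac{y^2-x^2}{4}\bigl(U^3+V^3\bigr).
\]
The task becomes the finite inequality that the endpoint side dominates this quantity.

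\textbf{Step 2: eliminate $B$ and $L$.} Use the root equations to write $LB$ and $y^2-x^2=(y-x)(y+x)$ in terms of $x,y,U,V$. From them, $y-x=B(U+V)$ and $L=(xV+yU)/(U+V)$. The denominator factors $1-La^{1-\alpha}$ and $1-Lb^{1-\alpha}$ are rewritten using $a^{1-\alpha}=x^{(1-\alpha)/\alpha}$, and bounded below with $x,y\le1$. A common factor $B(U+V)$ then cancels, leaving an inequality homogeneous in the pairs $(x,U)$ and $(y,V)$. The constraint $x^p+U^p=1$ with $p\in(1,2)$ sits between the $\ell_1$ and $\ell_2$ normalizations. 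I would use this to replace the fractional powers by two-sided comparisons: $U^2\le 1-x^2$ and $U\ge 1-x$, and likewise for $(y,V)$.

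\textbf{Step 3: reduce to Lemma~\ref{lem:app-poly}.} After normalizing by the larger endpoint weight, the remaining inequality should depend on one ratio $s\in(0,1)$ (for instance $V/U$ or $x/y$) and an auxiliary quantity $r$. The interpolation between exponents $1$ and $2$ coming from $\alpha\in(1/2,1)$ is exactly what should confine $r$ to $s^2\le r\le s$. The factors $(1+s^2)$, $(1+r)(1+r^3)$ and $(1+r^2s)^2$ should then arise, respectively, from the trapezoid sum $U^3+V^3$, from the product of the two endpoint denominators, and from clearing denominators in the sum of the two endpoint terms via Cauchy--Schwarz or AM--GM. Lemma~\ref{lem:app-poly} then closes the argument.

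\textbf{Main obstacle.} The hardest part is Step~3: choosing the substitution so that all $\alpha$-dependence is absorbed into the single constraint $s^2\le r\le s$. The trapezoid step loses nothing essential, so the endpoint terms must be estimated sharply. I would first try $s=V/U$ and $r=(1-b)/(1-a)$ raised to a suitable power, checking that the exponent range $(1/2,1)$ gives exactly $r\in[s^2,s]$. If that fails, I would keep $L$ as a free parameter in $(0,1)$, note that the inequality is monotone in $L$, and reduce to the extreme case in which the component degenerates at $L\to1$ or $a\to0$.
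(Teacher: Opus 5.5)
Your setup matches the paper. A mixed component forces $0<L<1$. The root equations give $y-x=B(U+V)$ and $L=(xV+yU)/(U+V)$. Both denominators $d_a=1-La^{1-\alpha}$ and $d_b=1-Lb^{1-\alpha}$ are positive, and Lemma~\ref{lem:app-beta-trap} bounds the bulk by $\frac{y^2-x^2}{4}(U^3+V^3)$. Write $w=(1-b)/(1-a)$ and $r=w^\alpha=V/U$, and cancel $(y-x)(1-a)^{3\alpha}/(1+r)$. You are then left with exactly \eqref{eq:app-KM-base},
\[
 \frac{4L(1-a)}{x+y}\Bigl(\frac1{d_a}+\frac{w^{4\alpha+1}}{d_b}\Bigr)\ge(1+r)(1+r^3).
\]
This is where your Steps~2--3 stop being an argument. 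The missing idea is a sharp \emph{upper} bound on the denominators.

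The paper first applies Cauchy--Schwarz, $\frac1{d_a}+\frac{w^{4\alpha+1}}{d_b}\ge(1+w^{2\alpha+1/2})^2/(d_a+d_b)$. It then uses the compression estimate \eqref{eq:app-mixed-compress},
\[
 \frac{(d_a+d_b)(x+y)}{(1-a)(y+rx)}\le\frac{2(1+w)}{1+r}.
\]
This is proved by setting $c=a/b$, $u_0=c^{1-\alpha}$, $k=\alpha/(1-\alpha)>1$, which reduces it to $(1-w)(r-u_0)(1+u_0^k)\le(1-u_0^{k+1})(1+w)(1-r)$. That inequality follows from $r-u_0\le r(1-u_0)$, $1-u_0^{k+1}\ge(1-u_0)(1+u_0^k)$ and $2r\le2\sqrt w\le1+w$. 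With $L=(y+rx)/(1+r)$, these combine to show the left side is at least $2(1+r^2s)^2/(1+s^2)$ for $s=\sqrt w$. Lemma~\ref{lem:app-poly} finishes, since the constraint $s^2\le r\le s$ is just $w\le w^\alpha\le w^{1/2}$.

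So the factors arise differently from your guess:
\begin{itemize}
\item $1+r^3$ comes from the trapezoid sum $U^3+V^3=U^3(1+r^3)$. This is cubic in $V/U$, so it cannot yield $1+s^2$ with $s=V/U$.
\item $1+r$ comes from $U+V=U(1+r)$ in $B$.
\item $(1+r^2s)^2$ comes from Cauchy--Schwarz.
\item $1+s^2$ comes from the compression bound.
\end{itemize}

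The substitutes you propose cannot replace this step. Bounding $d_a,d_b$ from \emph{below} weakens the endpoint side, which is the side you need to bound from below. The $\ell_1$/$\ell_2$ comparisons $U\ge1-x$ and $U^2\le1-x^2$ are strict for $0<a<1$, whereas the target is asymptotically an equality. Mixed components exist for every $0<a<b<1$, and as $b\to a$ the endpoint side and the bulk both equal $\alpha b^{2\alpha-1}(1-b)^{3\alpha}(b-a)$ to leading order. Correspondingly, the polynomial $F_s(r)$ of Lemma~\ref{lem:app-poly} vanishes together with its gradient at $s=r=1$. Using those comparisons on any quantity that survives in this limit costs a fixed factor that the inequality cannot absorb. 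By contrast, the Cauchy--Schwarz and compression steps become equalities as $w\to1$.

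Your fallback fails for the same reason. On a fixed component, $L=(xV+yU)/(U+V)$ is determined by $(a,b)$, so it is not a free parameter. Moreover, the degenerations $L\to1$ or $a\to0$ are not the critical regime; the tight one is $b\to a$.
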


\begin{proof}
The mixed roots are described in $t$-coordinates by
\[
 B(1-a)^\alpha+a^\alpha=L,
 \qquad
 b^\alpha-B(1-b)^\alpha=L,
 \qquad 0<a<b<1.
\]
Set
\[
 X=a^\alpha,
 \quad Y=b^\alpha,
 \quad U=1-a,
 \quad V=1-b=wU,
 \quad r=w^\alpha.
\]
Solving the two root equations gives
\begin{equation}\label{eq:app-mixed-BL}
 B=\frac{Y-X}{U^\alpha+V^\alpha},
 \qquad
 L=\frac{YU^\alpha+XV^\alpha}{U^\alpha+V^\alpha}=\frac{Y+rX}{1+r}.
\end{equation}
The endpoint denominators from \eqref{eq:app-endpoint-denominators} are
\[
 d_a^+=1-La^{1-\alpha},
 \qquad d_b^+=1-Lb^{1-\alpha}.
\]
For $\delta=3\alpha$, the endpoint side on this component equals
\[
 LB\left(\frac{U^{4\alpha+1}}{d_a^+}
 +\frac{V^{4\alpha+1}}{d_b^+}\right)
 =\frac{L(Y-X)U^{3\alpha+1}}{1+r}
 \left(\frac1{d_a^+}+\frac{w^{4\alpha+1}}{d_b^+}\right).
\]
The beta-trapezoid estimate gives the bulk bound
\[
 \alpha\int_a^b t^{2\alpha-1}(1-t)^{3\alpha}\,\dd t
 \le
 \frac{Y^2-X^2}{4}U^{3\alpha}(1+w^{3\alpha}).
\]
After cancelling the common positive factor $(Y-X)U^{3\alpha}/(1+r)$, it remains to prove
\begin{equation}\label{eq:app-KM-base}
 \frac{4LU}{X+Y}\left(\frac1{d_a^+}+\frac{w^{4\alpha+1}}{d_b^+}\right)
 \ge(1+w^\alpha)(1+w^{3\alpha}).
\end{equation}
Cauchy's inequality gives
\[
 \frac1{d_a^+}+\frac{w^{4\alpha+1}}{d_b^+}
 \ge\frac{(1+w^{2\alpha+1/2})^2}{d_a^++d_b^+}.
\]
We claim that
\begin{equation}\label{eq:app-mixed-compress}
 \frac{(d_a^++d_b^+)(X+Y)}{U(Y+rX)}\le\frac{2(1+w)}{1+r}.
\end{equation}
To verify it, put $c=a/b$, $u_0=c^{1-\alpha}$, and $k=\alpha/(1-\alpha)>1$. Then $c^\alpha=u_0^k$, and \eqref{eq:app-mixed-compress} reduces to
\[
 (1-w)(r-u_0)(1+u_0^k)
 \le (1-u_0^{k+1})(1+w)(1-r).
\]
This follows from the three inequalities
\[
 r-u_0\le r(1-u_0),
 \qquad
 1-u_0^{k+1}\ge(1-u_0)(1+u_0^k),
 \qquad
 r(1-w)\le(1+w)(1-r).
\]
The last inequality is equivalent to $2r\le1+w$. Since $\alpha>1/2$, we have $r=w^\alpha\le\sqrt w$, and hence
\[
 2r\le2\sqrt w\le1+w.
\]
Combining Cauchy with \eqref{eq:app-mixed-compress}, \eqref{eq:app-KM-base} follows from Lemma~\ref{lem:app-poly} with $s=\sqrt w$.
\end{proof}

\begin{lemma}[Central component at the base exponent]\label{lem:app-central-component}
The estimate \eqref{eq:app-wcl} holds on every central component when $\delta=3\alpha$.
\end{lemma}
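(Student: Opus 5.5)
Central components are handled in the same way as mixed ones in Lemma~\ref{lem:app-mixed-component}: parametrize by the two boundary roots, bound the bulk with the beta-trapezoid, and reduce to an elementary inequality in the ratio $w=(1-b)/(1-a)$.

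\textbf{Parametrization.} For $0<L<1$ a central component opens at a root of $G=L$ and closes at a root of $G=-L$. In $t$-coordinates,
\[
 B(1-a)^\alpha-a^\alpha=L,\qquad b^\alpha-B(1-b)^\alpha=L,\qquad 0\le a<b<1.
\]
I will use the notation $X=a^\alpha$, $Y=b^\alpha$, $U=1-a$, $V=1-b=wU$, $r=w^\alpha$. Adding and subtracting the two root equations gives
\[
 B=\frac{X+Y}{U^\alpha+V^\alpha},\qquad L=\frac{Y-rX}{1+r}.
\]
By \eqref{eq:app-endpoint-denominators} the endpoint denominators are
\[
 d_a=1+La^{1-\alpha},\qquad d_b=1-Lb^{1-\alpha},
\]
and both are positive, since $G$ is strictly decreasing.

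\textbf{Reduction.} The endpoint side of \eqref{eq:app-wcl} becomes
\[
 \frac{L(X+Y)U^{3\alpha+1}}{1+r}\left(\frac1{d_a}+\frac{w^{4\alpha+1}}{d_b}\right).
\]
For the bulk, Lemma~\ref{lem:app-beta-trap} with $\delta=3\alpha$ gives the bound
\[
 \frac{Y^2-X^2}{4}\,U^{3\alpha}(1+w^{3\alpha}).
\]
Cancelling the common factor $(X+Y)U^{3\alpha}/(1+r)$, it remains to show
\[
 \frac{4LU}{Y-X}\left(\frac1{d_a}+\frac{w^{4\alpha+1}}{d_b}\right)\ge(1+r)(1+w^{3\alpha}).
\]
Cauchy's inequality bounds the bracket below by $(1+w^{2\alpha+1/2})^2/(d_a+d_b)$. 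The target is then a compression estimate analogous to \eqref{eq:app-mixed-compress}:
\[
 \frac{(d_a+d_b)(Y-X)}{U(Y-rX)}\le\frac{2(1+w)}{1+r}.
\]
Given this estimate, Lemma~\ref{lem:app-poly} with $s=\sqrt w$, $r=w^\alpha\in[w,\sqrt w]$ closes the argument exactly as in the mixed case.

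\textbf{Proving the compression estimate.} First, $d_a+d_b=2-L(b^{1-\alpha}-a^{1-\alpha})\le 2$. Put $c=a/b$ and $u_0=c^{1-\alpha}$, and note that $X=u_0^kY$ with $k=\alpha/(1-\alpha)$. One sufficient route is to keep only the crude bound $d_a+d_b\le2$. The estimate would then follow from
\[
 (Y-X)(1+r)\le(1+w)(Y-rX)\,U.
\]
This is delicate because $U<1$. So I will instead keep the negative term $-L(b^{1-\alpha}-a^{1-\alpha})$ and substitute $L=(Y-rX)/(1+r)$. This turns the inequality into a statement in $u_0,w,b$. I will then eliminate $b$ through $U=1-a=1-cb$ and $w=(1-b)/(1-cb)$, which should reduce everything to three monotone elementary inequalities of the same shape as in the mixed case, using $r\le\sqrt w$ from $\alpha>1/2$.

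\textbf{Main obstacle and fallback.} The central case has no restriction forcing $X$ to be comparable to $Y$; when $a=0$ the component starts at the origin. I expect the compression estimate to be the hard step. If the direct factorization fails, I will first handle $a=0$ separately as the worst case, where $d_a=1$ and the bulk integral is explicit. I will then use monotonicity in $a$ at fixed $b$ to transfer that case to general $a$.
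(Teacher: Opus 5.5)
Your reduction is the paper's: the same parametrization, the same $B$, $L$, $d_a$, $d_b$, the beta-trapezoid bound, the Cauchy step and the appeal to Lemma~\ref{lem:app-poly}, ending at the compression estimate \eqref{eq:app-central-compress}. The gap is that this estimate is never proved, and it is the only part of the lemma not inherited from the mixed case. Saying it ``should reduce to three monotone elementary inequalities of the same shape as in the mixed case'' is a hope, not an argument.

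Substitute $L=(Y-rX)/(1+r)$, $X=qY$ with $q=u_0^k$, $Yb^{1-\alpha}=b$ and $(1+w)U=2-(1+c)b$. The estimate then collapses to
\[
 (1-w)(1-rq)(1+u_0)\le 2(1-r)(1-qu_0w),
\]
which is the paper's $B_0(u_0)\ge0$. The mixed-case inequality has slack at $u_0=1$; this one does not. It is an equality along the whole edge $u_0=1$, and also along $w=1$. The first-order condition there, $B_0'(1)=-1+(2k+1)t^k(1-t)+t^{2k+1}\le0$ with $w=t^{k+1}$, $r=t^k$, itself degenerates to second order at $t=1$. So there is no room near $u_0=1$ for crude termwise bounds of the mixed type.

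The paper instead proves that $B_0$ is nonincreasing on $[0,1]$, using $B_0(1)=0$. The slope at $u=1$ is controlled by the monotonicity of $\Phi(t)=(2k+1)t^k(1-t)+t^{2k+1}$. A nonnegative interior maximum of $B_0'$ is excluded by splitting on the sign of $A=r(1+w)-2w$:
\begin{enumerate}[label=(\roman*)]
\item if $A\ge0$, then $B_0'$ is increasing, so $B_0'\le B_0'(1)\le0$;
\item if $A<0$, then at the unique critical point $u_*$ one has $B_0'(u_*)=(1-w)(ru_*^{k-1}-1)<0$.
\end{enumerate}
Some argument of this calculus type is what your proposal is missing.

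Your fallback targets the wrong extreme. At $a=0$ the compression estimate reduces to $2w^\alpha\le1+w$, which has slack for every $w<1$. The tight regime is a shrinking component, $a\uparrow b$. For fixed $b$ the deficit $\frac{2(1+w)}{1+r}-\frac{(d_a+d_b)(Y-X)}{U(Y-rX)}$ tends to $0$ in that limit. For $\alpha=3/4$, $b=1/2$ it is about $0.19$ at $a=0$, $0.017$ at $a=1/4$, and $8.5\cdot10^{-4}$ at $a=0.45$. Likewise, the ratio of the endpoint side to the bulk of \eqref{eq:app-wcl} on the component tends to $1$. Monotonicity in $a$ at fixed $b$ therefore runs the wrong way, and the case $a=0$ cannot be transferred to general $a$.

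Finally, the crude route through $d_a+d_b\le2$ is not merely delicate but false: at $a=0$ it would require $w^\alpha\le w$.
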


\begin{proof}
The central roots are described by
\[
 B(1-a)^\alpha-a^\alpha=L,
 \qquad
 b^\alpha-B(1-b)^\alpha=L,
 \qquad 0<a<b<1.
\]
With the same notation $X,Y,U,V,w,r$ as in the mixed case,
\begin{equation}\label{eq:app-central-BL}
 B=\frac{X+Y}{U^\alpha+V^\alpha},
 \qquad
 L=\frac{YU^\alpha-XV^\alpha}{U^\alpha+V^\alpha}=\frac{Y-rX}{1+r}.
\end{equation}
The endpoint denominators are
\[
 d_a^-=1+La^{1-\alpha},
 \qquad d_b^-=1-Lb^{1-\alpha}.
\]
Exactly as above, the beta-trapezoid estimate reduces the desired inequality to
\begin{equation}\label{eq:app-KC-base}
 \frac{4LU}{Y-X}\left(\frac1{d_a^-}+\frac{w^{4\alpha+1}}{d_b^-}\right)
 \ge(1+w^\alpha)(1+w^{3\alpha}).
\end{equation}
After Cauchy's inequality and Lemma~\ref{lem:app-poly}, it remains to prove the compression estimate
\begin{equation}\label{eq:app-central-compress}
 \frac{(d_a^-+d_b^-)(Y-X)}{U(Y-rX)}\le\frac{2(1+w)}{1+r}.
\end{equation}
Put $c=a/b$, $u_0=c^{1-\alpha}$, $q=u_0^k$, and $k=\alpha/(1-\alpha)>1$. Since $c=qu_0$, the elementary relations
\[
 U=\frac{1-qu_0}{1-qu_0w},
 \qquad
 b=\frac{1-w}{1-qu_0w},
 \qquad
 X=qY
\]
give the exact identity
\begin{align}\label{eq:central-positive-factor}
&\frac{2(1+w)}{1+r}
-\frac{(d_a^-+d_b^-)(Y-X)}{U(Y-rX)} \notag\\
&\quad =
\frac{1+q}{(1+r)(1-rq)(1-qu_0)}\notag\\
&\qquad\times
\Big(qru_0w+qru_0-qrw+qr-2qu_0w-2r+u_0w-u_0+w+1\Big).
\end{align}
The prefactor in \eqref{eq:central-positive-factor} is positive because $0<q,r,u_0<1$. It remains to show
\[
 B_0(u):=qruw+qru-qrw+qr-2quw-2r+uw-u+w+1\ge0
\]
on $0\le u\le1$, where $q=u^k$ and $r=w^\alpha$.

Write $w=rt$ with $0<t<1$ and $r=t^k$. Then $q=u^k$. Differentiating gives
\[
 B_0'(u)=w-1+(k+1)Au^k+kr(1-w)u^{k-1},
 \qquad A=r(1+w)-2w.
\]
First,
\[
 B_0'(1)=-1+(2k+1)t^k(1-t)+t^{2k+1}\le0.
\]
Indeed,
\[
 \Phi(t)=(2k+1)t^k(1-t)+t^{2k+1}
\]
is increasing on $(0,1)$ and equals $1$ at $t=1$, since
\[
 \Phi'(t)=(2k+1)t^{k-1}\bigl(k-(k+1)t+t^{k+1}\bigr)>0.
\]
Let $F=B_0'$. If $A\ge0$, then $F$ is increasing, so $F(u)\le F(1)\le0$ for $u\le1$. If $A<0$, then
\[
 F'(u)=ku^{k-2}\bigl((k+1)Au+(k-1)r(1-w)\bigr)
\]
has at most one zero. Since $F(0)=w-1<0$, a nonnegative maximum of $F$ can occur only at $u=1$, already handled, or at an interior critical point $u_*$ satisfying
\[
 (k+1)Au_*+(k-1)r(1-w)=0.
\]
At such a point,
\[
 F(u_*)=w-1+r(1-w)u_*^{k-1}<0.
\]
Thus $B_0'(u)\le0$ on $[0,1]$. Since $B_0(1)=0$, we obtain $B_0(u)\ge0$. This proves \eqref{eq:app-central-compress}, and hence \eqref{eq:app-KC-base}.
\end{proof}

\begin{lemma}[Cap components]\label{lem:app-cap-component}
The estimate \eqref{eq:app-wcl} holds on every cap component for every $\delta\ge3\alpha$.
\end{lemma}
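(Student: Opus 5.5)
In the cap regime $L>1$ I would prove \eqref{eq:app-wcl} separately on each of the two component types of Lemma~\ref{lem:app-root-config}(ii), working in the variable $t=s^p$ as in the mixed and central cases. Write $a<b$ for the $t$-coordinates of the opening and closing roots, and $U=1-a$, $V=1-b=wU$, $r=w^\alpha$, $X=a^\alpha$, $Y=b^\alpha$. The bulk side is
\[
\alpha\int_a^b t^{2\alpha-1}(1-t)^\delta\,\dd t,
\]
and I would bound it by Lemma~\ref{lem:app-beta-trap}, which is valid for every $\delta\ge3\alpha$. This gives at most $\tfrac14(Y^2-X^2)U^\delta(1+w^\delta)$. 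The endpoint side is computed from \eqref{eq:app-endpoint-denominators}. Unlike the central and mixed lemmas, I would not first reduce to $\delta=3\alpha$. Instead I would keep $\delta$ general, since the factor $(1-t)^\delta$ is common to both sides and will only be compared at the two endpoints.

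\emph{Pure cap: both endpoints are roots of $H=L$.} At the increasing-branch root $a$ we have
\[
0<d_a:=1-La^{1-\alpha}\le1,
\]
so $1/d_a\ge1$. At the decreasing-branch root $b$ we have $d_b<0$ and $|d_b|=Lb^{1-\alpha}-1\le L-1$. Subtracting the two root equations $B(1-t)^\alpha+t^\alpha=L$ gives
\[
B=\frac{Y-X}{U^\alpha-V^\alpha},
\]
and adding them expresses $L$ as a weighted average. After cancelling the common factor $(Y-X)U^\delta$, the target inequality becomes
\[
\frac{4LU^{\alpha+1}}{(X+Y)(U^\alpha-V^\alpha)}\Bigl(\frac1{d_a}+\frac{w^{\delta+\alpha+1}}{|d_b|}\Bigr)\ge 1+w^\delta .
\]
I expect $1/d_a\ge1$ together with the inequality
\[
\frac{LU^{\alpha+1}}{U^\alpha-V^\alpha}\ge\frac{U}{1-r}\ge\dots
\]
to carry the argument. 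Here I would use $L\ge Y\ge (X+Y)/2$, which holds because $L=H>t^\alpha$ at the root, and I would use $1+w^\delta\le2$. The remaining concern is the factor $U/(1-r)$ against a possibly small $U$. For this I would use the convexity of $t\mapsto(1-t)^\alpha$ to get $U^\alpha-V^\alpha\le\alpha U^{\alpha-1}(b-a)$. I would also use $(X+Y)\le 2$ together with the relation between $b-a$ and $Y-X$ coming from the root equations.

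\emph{Second type: opened by $G=L$, closed by the decreasing root of $H=L$.} This is the analogue of the mixed case, with both denominators now involving $L>1$. I would take $B$ and $L$ from the two root equations,
\[
B=\frac{X+Y}{U^\alpha+V^\alpha},
\qquad
L=\frac{YU^\alpha+XV^\alpha}{U^\alpha+V^\alpha}\cdot(\ldots),
\]
and then apply Cauchy's inequality as in Lemma~\ref{lem:app-mixed-component}. The resulting compression estimate would be proved in the variables $u_0=(a/b)^{1-\alpha}$ and $k=\alpha/(1-\alpha)$, by elementary monotonicity as in Lemma~\ref{lem:app-central-component}, and then closed with Lemma~\ref{lem:app-poly} applied at $s=\sqrt w$. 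Since the trapezoid factor $1+w^\delta$ decreases in $\delta$ while the endpoint factor $w^{\delta+\alpha+1}$ also decreases, I would check that the inequality at general $\delta$ follows from the one in which $w^{\delta+\alpha+1}$ is dropped entirely. This is plausible because $L>1$ makes $1/d_a$ large.

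I expect the main obstacle to be the pure cap when $|d_b|$ is close to its maximum $L-1$ with $L$ large. In that regime the endpoint term at $b$ is weak, so everything must come from the increasing root $a$. If the crude bound $1/d_a\ge1$ is insufficient there, I would fall back on the exact value $1/d_a=1/(1-La^{1-\alpha})$ and prove the resulting two-variable inequality in $(u_0,w)$ by a Bernstein-coefficient check as in Lemma~\ref{lem:app-poly}. Tangential and endpoint configurations would be handled by continuity, as allowed by Lemma~\ref{lem:app-root-config}(iii).
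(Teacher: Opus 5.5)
As written, neither cap configuration is proved, and the tools you propose would not close them. In the pure cap, your bounds $1/d_a\ge1$, $4L/(X+Y)\ge2$ and $1+w^\delta\le2$ reduce your display to $U/(1-r)\ge1$, i.e.\ $w^\alpha\ge a$. This is false whenever $a$ is near $1$ and $w$ is not (e.g.\ $a=0.99$, $w=1/2$), which is exactly your ``small $U$'' worry. Your suggested repair runs the wrong way. Since $t\mapsto(1-t)^\alpha$ is concave for $\alpha<1$, we have $U^\alpha-V^\alpha=\alpha\int_a^b(1-t)^{\alpha-1}\,\dd t\ge\alpha U^{\alpha-1}(b-a)$, not $\le$. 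Indeed $U^{\alpha+1}/(U^\alpha-V^\alpha)=U/(1-r)$ can be as small as roughly $U$. In the second configuration, your formula for $B$ is wrong: equating $BU^\alpha-X=L=BV^\alpha+Y$ gives $B=(X+Y)/(U^\alpha-V^\alpha)$. Beyond that, $L$ is left as ``$(\ldots)$'' and no compression estimate is formulated. Lemma~\ref{lem:app-poly} is also tailored to the right-hand side $(1+w^\alpha)(1+w^{3\alpha})$ at $\delta=3\alpha$, so it does not cover general $\delta$. Your heuristic that ``$L>1$ makes $1/d_a$ large'' is backwards there: at a root of $G=L$ the denominator is $1+La^{1-\alpha}>1$. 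What actually helps is that the numerator $L(L+a^\alpha)$ grows quadratically in $L$.

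The missing observation is that, once $L\ge1$, the opening endpoint alone dominates the bulk, and this needs only one-line bounds. In the pure cap, replace $d_a\le1$ by $d_a=1-La^{1-\alpha}\le1-a^{1-\alpha}\le1-a=U$. Your display then has left side at least $\frac{2U}{1-r}\cdot\frac1U\ge2\ge1+w^\delta$. In the second configuration, with the correct $B$, the opening term alone reduces the target to $4LU\ge(1-r)(1+La^{1-\alpha})(Y-X)(1+w^\delta)$. This follows from $L/(1+La^{1-\alpha})\ge1/(1+a^{1-\alpha})\ge1/2$ together with $Y-X\le1-a^\alpha\le U$. The paper's argument is more direct still. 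It writes the opening term through the single root equation as $L(L\mp a^\alpha)(1-a)^{\delta+1}/(1\mp La^{1-\alpha})$. The coefficient is at least $1$ (using $L\ge1$ and $a^\alpha\le a^{1-\alpha}$), resp.\ at least $1/2$ (by monotonicity in $L$). The entire bulk is then bounded by $\alpha\int_a^1(1-t)^\delta\,\dd t=\frac{\alpha}{\delta+1}(1-a)^{\delta+1}$, with $\alpha/(\delta+1)\le\alpha/(3\alpha+1)<1/4$. No closing endpoint, trapezoid rule, Cauchy step or polynomial lemma is needed, and every $\delta\ge3\alpha$ is handled at once.
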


\begin{proof}
Assume $L>1$. If the active component is empty, there is nothing to prove. Suppose first that the component begins at a root of $H=L$ on the increasing branch, written in $t$-coordinates as
\[
 B(1-a)^\alpha+a^\alpha=L.
\]
The opening endpoint contribution is
\[
 L(L-a^\alpha)\frac{(1-a)^{\delta+1}}{1-La^{1-\alpha}}.
\]
The entire bulk is bounded by
\[
 \alpha\int_a^1 t^{2\alpha-1}(1-t)^\delta\,\dd t
 \le \alpha\int_a^1(1-t)^\delta\,\dd t
 =\frac{\alpha}{\delta+1}(1-a)^{\delta+1}.
\]
Thus it suffices to show
\[
 \frac{L(L-a^\alpha)}{1-La^{1-\alpha}}\ge\frac{\alpha}{\delta+1}.
\]
The left side is at least $1$: $L(L-a^\alpha)\ge1-a^\alpha$, while $1-La^{1-\alpha}\le1-a^{1-\alpha}$ and $a^\alpha\le a^{1-\alpha}$. This proves the first cap configuration.

If the component begins at a root of $G=L$,
\[
 B(1-a)^\alpha-a^\alpha=L,
\]
the opening endpoint contributes
\[
 L(L+a^\alpha)\frac{(1-a)^{\delta+1}}{1+La^{1-\alpha}}.
\]
For fixed $a$, the function
\[
 L\mapsto \frac{L(L+a^\alpha)}{1+La^{1-\alpha}}
\]
is increasing for $L\ge1$, because its derivative is
\[
 \frac{2L+a^\alpha+L^2a^{1-\alpha}}{(1+La^{1-\alpha})^2}>0.
\]
Therefore it is at least $(1+a^\alpha)/(1+a^{1-\alpha})\ge1/2$, whereas $\alpha/(\delta+1)\le\alpha/(3\alpha+1)<1/2$. Endpoint and tangency cases are obtained by monotone limiting of the component endpoints and dominated convergence of the bulk.
\end{proof}

\begin{lemma}[Extension from $\delta=3\alpha$ to $\delta\ge3\alpha$]\label{lem:app-delta-extension}
If \eqref{eq:app-wcl} holds on all central and mixed components for $\delta=3\alpha$, then it holds on those components for every $\delta\ge3\alpha$.
\end{lemma}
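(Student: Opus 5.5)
The plan is to exploit the fact that the exponent $\delta$ enters the component estimates only through one explicit factor that lies between $0$ and $1$. On either kind of component the inequality to be proved then becomes affine in that factor, so it suffices to check its two extreme values. One extreme is the base case $\delta=3\alpha$, which is assumed. The other extreme involves only the opening endpoint.

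\textbf{Step 1: reduction for general $\delta$.} I would redo the reductions of Lemmas~\ref{lem:app-mixed-component} and~\ref{lem:app-central-component} with general $\delta\ge3\alpha$, keeping the notation $X,Y,U,V=wU,r=w^\alpha$. The opening endpoint term is proportional to $U^{\delta+\alpha+1}/d_a^{\pm}$ and the closing term to $V^{\delta+\alpha+1}/d_b^{\pm}$. Lemma~\ref{lem:app-beta-trap} holds for every $\delta\ge3\alpha$, so it bounds the bulk by
\[
 \frac{Y^2-X^2}{4}\bigl((1-a)^\delta+(1-b)^\delta\bigr)=\frac{Y^2-X^2}{4}U^\delta(1+w^\delta).
\]
Cancel the common factor $(Y\mp X)U^{\delta}/(1+r)$ exactly as before. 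Set $\gamma=\delta-3\alpha\ge0$ and $\kappa=w^{\gamma}\in(0,1]$. In the mixed case the remaining inequality reads
\[
 \frac{4LU}{X+Y}\cdot\frac1{d_a^+}
 +\kappa\,\frac{4LU}{X+Y}\cdot\frac{w^{4\alpha+1}}{d_b^+}
 \ge(1+r)+\kappa\,(1+r)w^{3\alpha}.
\]
The central case has the same form, with $X+Y$ replaced by $Y-X$ and $d^{+}$ replaced by $d^{-}$.

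\textbf{Step 2: affinity in $\kappa$.} Both sides are affine in $\kappa$ with $\kappa\in(0,1]$. The value $\kappa=1$ is exactly \eqref{eq:app-KM-base}, respectively \eqref{eq:app-KC-base}. Those are the reduced forms of the base case, which the lemma assumes and which was established in Lemmas~\ref{lem:app-mixed-component} and~\ref{lem:app-central-component}. It therefore suffices to verify the value $\kappa=0$, namely the single-endpoint inequalities
\[
 \frac{4LU}{(X+Y)\,d_a^+}\ge 1+r\quad\text{(mixed)},\qquad
 \frac{4LU}{(Y-X)\,d_a^-}\ge 1+r\quad\text{(central)}.
\]
These say that the opening endpoint alone dominates the trapezoid mass that remains when the far endpoint is discounted.

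\textbf{Step 3: the $\kappa=0$ inequalities (expected main obstacle).} For the mixed case, substitute $L=(Y+rX)/(1+r)$ from \eqref{eq:app-mixed-BL}. The target becomes
\[
 4U(Y+rX)\ge(1+r)^2(X+Y)\bigl(1-La^{1-\alpha}\bigr).
\]
I would use the same parametrization as before: $c=a/b$, $u_0=c^{1-\alpha}$, $X=u_0^kY$, together with $U$ and $b$ expressed through $w$ and $c$. The goal is to reduce this to a one-variable monotonicity statement in $u_0$ for fixed $w$. Two ingredients will be used: the bound $r\le\sqrt w$, which gives $(1+r)^2\le 2(1+w)$, and the inequality $1-u_0^{k+1}\ge(1-u_0)(1+u_0^k)$ already used in the mixed case. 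If this direct route fails, the fallback is to compare with the already-proved compression estimate \eqref{eq:app-mixed-compress}, using $d_a^+\le d_a^++d_b^+$.

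The central case is handled the same way, with $L=(Y-rX)/(1+r)$ and the factorization technique of \eqref{eq:central-positive-factor}. The aim there is to write the difference as a positive prefactor times a polynomial in $u,w$ whose sign is settled by a derivative argument, as was done for $B_0$. I expect this central single-endpoint inequality to be the delicate step. The denominator $d_a^-=1+La^{1-\alpha}$ exceeds $1$, so there is less slack at the opening endpoint, and the estimate may require the full strength of $\alpha>1/2$.
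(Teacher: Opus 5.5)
Your Steps 1 and 2 are the paper's reduction: the paper makes the target affine in $z=w^\delta\in[0,w^{3\alpha}]$, which is your $\kappa$ up to the factor $w^{3\alpha}$. So the whole content of the lemma is the pair of $\kappa=0$ inequalities, and you do not prove them.

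The fallback you give for the mixed case fails. Combining \eqref{eq:app-mixed-compress} with $d_a^+\le d_a^++d_b^+$ only yields
\[
 \frac{4LU}{(X+Y)\,d_a^+}\ge\frac{2}{1+w}.
\]
This is below $1+r$ whenever $(1+w)(1+r)>2$, for instance at $w=1/2$. The ``direct route'' and the central-case plan (a factorization plus a derivative argument modelled on $B_0$) are stated only as intentions. No bound on $d_a^\pm$ is identified, so nothing is actually established. Your expectation that the central case is delicate and may need $\alpha>1/2$ is also off.

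What is missing is two elementary bounds, one in each case. With them no $u_0$-parametrization is needed, and $\alpha>1/2$ is never used.

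\emph{Mixed case.} Since $L=(Y+rX)/(1+r)\ge X$, you get $d_a^+=1-La^{1-\alpha}\le 1-Xa^{1-\alpha}=1-a=U$. It then remains to check $4L\ge(1+r)(X+Y)$. This is equivalent to $(1-r)\bigl((3+r)Y-(1-r)X\bigr)\ge0$, which holds because $Y\ge X$.

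\emph{Central case.} First, $d_a^-\le 2/(1+r)$ is equivalent to $a^{1-\alpha}Y\le 1-rU$. This follows from weighted AM--GM, $a^{1-\alpha}b^\alpha\le(1-\alpha)a+\alpha b=a+\alpha U(1-w)$, together with $\alpha(1-w)\le 1-w^\alpha$. Second, $2UL\ge Y-X$. To see this, write
\[
 Y-X=\alpha\int_a^b t^{\alpha-1}\dd t,\qquad Y-rX=\alpha\int_{wa}^b t^{\alpha-1}\dd t.
\]
Since $t^{\alpha-1}$ is decreasing and $(a-wa)/(b-a)=a/U$, this gives $Y-rX\ge (Y-X)/U$. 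Then use $1+r\le 2$. Together these give
\[
 \frac{4LU}{(Y-X)\,d_a^-}\ge (1+r)\,\frac{2UL}{Y-X}\ge 1+r.
\]
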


\begin{proof}
In the mixed range the same reduction as in Lemma~\ref{lem:app-mixed-component} gives
\[
 \frac{4LU}{X+Y}\left(\frac1{d_a^+}+\frac{w^{\delta+\alpha+1}}{d_b^+}\right)
 \ge (1+w^\alpha)(1+w^\delta).
\]
This expression is affine in $z=w^\delta$. Since $0\le z\le w^{3\alpha}$, it is enough to check $z=w^{3\alpha}$ and $z=0$. The first value is the base case. At $z=0$ we need
\[
 \frac{4LU}{(X+Y)d_a^+}\ge1+r.
\]
Because $L\ge X$, one has $d_a^+\le1-Xa^{1-\alpha}=U$. Thus it suffices to prove
\[
 4L\ge(1+r)(X+Y),
\]
which, using $L=(Y+rX)/(1+r)$, is equivalent to
\[
 (1-r)((3+r)Y-(1-r)X)\ge0.
\]

In the central range the reduction gives
\[
 \frac{4LU}{Y-X}\left(\frac1{d_a^-}+\frac{w^{\delta+\alpha+1}}{d_b^-}\right)
 \ge (1+w^\alpha)(1+w^\delta).
\]
Again this is affine in $z=w^\delta$. The endpoint $z=w^{3\alpha}$ is the base case. At $z=0$ it remains to prove
\[
 \frac{4LU}{(Y-X)d_a^-}\ge1+r.
\]
We use two estimates. First,
\[
 d_a^-\le\frac2{1+r}.
\]
Indeed, this follows from
\[
 a^{1-\alpha}Y\le1-rU,
\]
which is a consequence of weighted AM--GM:
\[
 a^{1-\alpha}b^\alpha\le(1-\alpha)a+\alpha b\le1-U+U(1-w^\alpha)=1-rU.
\]
Second,
\[
 2UL\ge Y-X.
\]
For this, write
\[
 Y-X=\alpha\int_a^b t^{\alpha-1}\,\dd t,
 \qquad
 Y-rX=\alpha\int_{wa}^{b} t^{\alpha-1}\,\dd t.
\]
Since $t^{\alpha-1}$ is decreasing and
\[
 \frac{|[wa,a]|}{|[a,b]|}=\frac{a-wa}{b-a}=\frac aU,
\]
the integral over $[wa,a]$ dominates $(a/U)$ times the integral over $[a,b]$. Therefore $Y-rX\ge(Y-X)/U$, and since $L=(Y-rX)/(1+r)$ with $1+r\le2$, we get $2UL\ge Y-X$. The two estimates give the desired central endpoint $z=0$.
\end{proof}

\begin{proof}[Proof of Theorem~\ref{thm:app-wcl}]
Under the regularity assumptions in the statement,
Lemma~\ref{lem:app-root-config} decomposes the active set into finitely
many central, mixed, and cap components. The endpoint sums and the bulk
integral are additive over these components.
Lemmas~\ref{lem:app-mixed-component} and~\ref{lem:app-central-component} prove the base exponent on mixed and
central components, Lemma~\ref{lem:app-cap-component} proves the cap
range for every allowed exponent, and
Lemma~\ref{lem:app-delta-extension} lifts the mixed and central
estimates to every $\delta\ge3\alpha$. Summing the component
inequalities proves \eqref{eq:app-wcl}.
\end{proof}

\subsection{The dual layer inequality}

The second-derivative formula produces a reciprocal version of the
centered layer geometry. The following change-of-variables statement
converts it back to Theorem~\ref{thm:app-wcl}.

\begin{theorem}[Dual layer theorem]\label{thm:app-dlt}
Let $1/2<\alpha<1$, let $\beta\ge1+2\alpha$, and let $A\ge0$. Set
\[
 h(s)=\frac{1+As}{(1-s^p)^\alpha},
 \qquad
 g(s)=\frac{1-As}{(1-s^p)^\alpha},
 \qquad 0\le s<1.
\]
Let $x>0$ be a common regular value of $|g|$ and $h$, and assume
that no corresponding root lies at an endpoint. Then
\begin{align}\label{eq:app-dlt}
 &x\sum_{|g|=x}\frac{(1-s^p)^{\beta+\alpha-1}}{|g'(s)|}
 +x\sum_{h=x}\frac{(1-s^p)^{\beta+\alpha-1}}{|h'(s)|} \\
 &\hspace{1in}\ge
 A\int_{\{|g|<x<h\}}s(1-s^p)^{\beta+\alpha-1}\,\dd s.
\end{align}
\end{theorem}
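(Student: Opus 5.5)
The plan is to show that Theorem~\ref{thm:app-dlt} is Theorem~\ref{thm:app-wcl} in different coordinates, with
\[
 \delta=\beta+\alpha-1,\qquad B=\frac{x}{A},\qquad L=\frac1A,\qquad u(s)=(1-s^p)^\alpha .
\]
First dispose of $A=0$: the right side of \eqref{eq:app-dlt} vanishes and the left side is nonnegative, so there is nothing to prove. For $A>0$, note that $\beta\ge1+2\alpha$ gives exactly $\delta\ge3\alpha$, the exponent range of Theorem~\ref{thm:app-wcl}.

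\textbf{Step 1: the root sets correspond.} Multiplying each equation by $u(s)>0$ on $[0,1)$ and dividing by $A$ gives the following dictionary, with $H=Bu+s$ and $G=Bu-s$:
\begin{itemize}
 \item $h(s)=x$ iff $1+As=xu$ iff $G(s)=L$;
 \item $g(s)=x$ iff $1-As=xu$ iff $H(s)=L$;
 \item $g(s)=-x$ iff $As-1=xu$ iff $G(s)=-L$.
\end{itemize}

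\textbf{Step 2: the active sets coincide.} The same manipulation gives
\begin{itemize}
 \item $h>x$ iff $G<L$;
 \item $g<x$ iff $H>L$;
 \item $g>-x$ iff $G>-L$.
\end{itemize}
Hence $\{|g|<x<h\}=\{|G|<L<H\}\cap[0,1)$. The regularity assumptions transfer directly: regular values and non-endpoint roots for $|g|,h$ become simple, non-endpoint roots for $|G|,H$.

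\textbf{Step 3: the derivatives correspond at roots.} Using $u'=-s^{p-1}(1-s^p)^{\alpha-1}$:
\begin{itemize}
 \item At a root of $h=x$ we have $1+As=xu$, so
 \[
 h'=\frac{Au-(1+As)u'}{u^2}=\frac{A-xu'}{u}=-\frac{A\,G'(s)}{u},
 \]
 since $AG'=xu'-A$.
 \item At a root of $g=\pm x$ we have $1-As=\pm xu$, so
 \[
 g'=\frac{-Au-(1-As)u'}{u^2}=\mp\frac{A\,H'(s)}{u}\ \text{or}\ -\frac{A\,G'(s)}{u}.
 \]
\end{itemize}
In every case $|{\cdot}'|$ equals $A/u$ times the matching $|H'|$ or $|G'|$. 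Therefore each endpoint term of \eqref{eq:app-dlt} becomes
\[
 x\,\frac{(1-s^p)^{\delta}\,u}{A\,|G'|}=\frac{x}{A}\cdot\frac{(1-s^p)^{\delta+\alpha}}{|G'|}=B\,\frac{(1-s^p)^{\delta+\alpha}}{|G'|},
\]
and likewise with $H'$ in place of $G'$.

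\textbf{Step 4: conclude.} Dividing \eqref{eq:app-dlt} by $A$ turns the endpoint side into $LB\sum(\cdots)$ and the bulk side into $\int_{\{|G|<L<H\}}s(1-s^p)^\delta\,\dd s$. This is precisely \eqref{eq:app-wcl}, so Theorem~\ref{thm:app-wcl} finishes the proof.

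The main obstacle is the sign bookkeeping in the $g=-x$ branch and in the active-set translation. The three root types must land on $G=L$, $H=L$ and $G=-L$ without any branch being double counted or lost. The point $s=1$ also needs care: $h$ and $g$ are singular there, but the assumption that no root lies at an endpoint, together with the fact that $\{s=1\}$ is null for the bulk integral, makes it harmless.
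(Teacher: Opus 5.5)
Your proposal is correct and is the paper's proof. It uses the same substitution $\delta=\beta+\alpha-1$, $B=x/A$, $L=1/A$ and the same root dictionary ($h=x\leftrightarrow G=L$, $g=x\leftrightarrow H=L$, $g=-x\leftrightarrow G=-L$). It relies on the same derivative relations $|h'|=(A/u)|G'|$ and $|g'|=(A/u)|H'|$ or $(A/u)|G'|$ on $g=\pm x$, and divides by $A$ to land on Theorem~\ref{thm:app-wcl}; you simply write out the bookkeeping the paper leaves implicit.

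Two small points. First, at a root of $g=-x$ one actually gets $g'=+AG'/u$; this is harmless since only $|g'|$ enters. Second, when $A=1$ the point $s=1$ becomes a zero-weight endpoint root of both $G=-L$ and $H=L$ in the centered picture, so the endpoint hypothesis does not transfer verbatim there and Theorem~\ref{thm:app-wcl} is used in its limiting form. The paper passes over this point as well.
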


\begin{proof}
If $A=0$, the right side is zero. Otherwise put
\[
 u(s)=(1-s^p)^\alpha,
 \qquad
 B=\frac{x}{A},
 \qquad
 L=\frac1A.
\]
Then the endpoint equations for $g,h$ correspond to $G=\pm L$ and $H=L$ for the centered functions $G=Bu-s$ and $H=Bu+s$. Also
\[
 \{|g|<x<h\}=\{|G|<L<H\}.
\]
At corresponding endpoints,
\[
 |h'|=\frac{A}{u}|G'|,
 \qquad
 |g'|=\frac{A}{u}|H'|\ \text{on }g=x,
 \qquad
 |g'|=\frac{A}{u}|G'|\ \text{on }g=-x.
\]
Set $\delta=\beta+\alpha-1\ge3\alpha$. The endpoint side of \eqref{eq:app-dlt} is $A$ times the endpoint side of \eqref{eq:app-wcl}, and the right side is $A$ times the centered bulk. Theorem~\ref{thm:app-wcl} gives the claim.
\end{proof}
\subsection{Proof of the preservation theorem}

\begin{proof}[Proof of Theorem~\ref{thm:app-append}]
We first assume $1/2<\alpha<1$. It suffices to check lower-hinge functions $\Phi_\rho(q)=(\rho^2-q)_+$ and affine functions. Affine functions are preserved because the transformation adds the same second-moment contribution to both sides. The case $\rho=0$ is trivial, since $\Phi_0(q)\equiv0$ on $[0,\infty)$; hence assume $\rho>0$.

For $q\ge0$ set
\[
 G(q)=\E_{T,\eps}\left(\rho^2-\left((1-T)^\alpha\sqrt q+cT^\alpha\eps\right)^2\right)_+.
\]
The Rademacher average makes this depend only on $q$. We prove that $G$ is convex. Put
\[
 R=\left(\frac{T}{1-T}\right)^\alpha.
\]
Then
\[
 (1-T)^\alpha=(1+R^p)^{-\alpha},
 \qquad
 T^\alpha=R(1+R^p)^{-\alpha},
\]
and the density of $R$ is a positive multiple of $(1+R^p)^{-\alpha-\beta}\dd R$. Hence, up to a positive constant,
\[
G(q)=\int_0^\infty (1+R^p)^{-\beta-3\alpha}\frac12\sum_{\pm}
\left(\rho^2(1+R^p)^{2\alpha}-(\sqrt q\pm cR)^2\right)_+\,\dd R.
\]
Set
\[
 L(R)=\rho(1+R^p)^\alpha,
 \qquad
 D(R)=cR,
 \qquad
 W(R)=(1+R^p)^{-\beta-3\alpha}.
\]
Lemma~\ref{lem:app-second-dist} gives a second derivative formula. Now pass to
\[
 s=\frac{R}{(1+R^p)^{1/p}},
 \qquad
 R=\frac{s}{(1-s^p)^\alpha},
 \qquad
 \dd R=(1-s^p)^{-\alpha-1}\dd s.
\]
Then
\[
 1+R^p=(1-s^p)^{-1},
 \qquad
 L(R)\pm D(R)=\rho\frac{1\pm(c/\rho)s}{(1-s^p)^\alpha}.
\]
Writing $A=c/\rho$ and $y=x/\rho$, the dual layer functions are
\[
 h(s)=\frac{1+As}{(1-s^p)^\alpha},
 \qquad
 g(s)=\frac{1-As}{(1-s^p)^\alpha}.
\]
The weights transform as follows. Since
\[
 W(R)L(R)=\rho(1+R^p)^{-\beta-2\alpha}
 =\rho(1-s^p)^{\beta+2\alpha}
\]
and
\[
 \frac{\dd R}{\dd s}=(1-s^p)^{-\alpha-1},
\]
we have, at endpoint roots,
\begin{align*}
 \frac{W(R)L(R)}{|(L+D)'(R)|}
 &=
 \frac{(1-s^p)^{\beta+\alpha-1}}{|h'(s)|},\\
 \frac{W(R)L(R)}{|(L-D)'(R)|}
 &=
 \frac{(1-s^p)^{\beta+\alpha-1}}{|g'(s)|}.
\end{align*}
For the bulk term,
\[
 W(R)D(R)\,\dd R
 =\rho A s(1-s^p)^{\beta+\alpha-1}\,\dd s.
\]
Thus, at every common regular value, the density of the signed
measure in \eqref{eq:app-dist-measure} is $\rho$ times the dual-layer
expression in Theorem~\ref{thm:app-dlt}, with parameter $A=c/\rho$
and threshold $y=x/\rho$. Indeed, $h'(s)>0$, while the sign of $g'(s)$ is the sign of
$s^{p-1}-A$. Thus $h$ is strictly increasing, and $g$ is strictly
monotone on each of at most two intervals. After splitting once more at
the possible zero of $g$, the same is true of $|g|$. On each monotone
branch, the one-dimensional change-of-variables formula shows that the
pushforward of the absolutely continuous endpoint measure is absolutely
continuous. The isolated critical values create no atoms. Consequently,
at almost every $x>0$ the Radon--Nikodym density is given by the coarea
sums above. The dual layer inequality shows that this density, and hence
the signed measure $xF''-F'$, is nonnegative. By the change of variables $q=x^2$, this is
equivalent to $G''\ge0$ distributionally on $(0,\infty)$. Continuity
at $0$ extends convexity to $[0,\infty)$.

Since $Y_1^2\cx Y_2^2$ and $G$ is convex,
\[
 \E G(Y_1^2)\le\E G(Y_2^2),
\]
which is the desired lower-hinge inequality after appending the coordinate.
This proves the theorem for $1/2<\alpha<1$.

It remains to treat $\alpha=1/2$. Choose $\alpha_j\downarrow1/2$ and
set
\[
 \beta_j=\max\{\beta,1+2\alpha_j\}.
\]
Then $\beta_j\to\beta$ and
$T_j\sim\operatorname{Beta}(\alpha_j,\beta_j)$ converges in law to
$T\sim\operatorname{Beta}(1/2,\beta)$. Apply the already proved case
to $(\alpha_j,\beta_j)$ and test with a lower hinge
$q\mapsto(\rho^2-q)_+$. Its value is bounded by $\rho^2$, so the
corresponding expectations converge as $j\to\infty$; the transformed
variables converge in distribution because the beta laws and the
exponents converge. The limiting lower-hinge inequality is the desired
one at $\alpha=1/2$. Equality of the transformed second moments follows
directly from $\E Y_1^2=\E Y_2^2$ and the symmetry of $\eps$. The
lower-hinge criterion completes the proof at the endpoint.
\end{proof}
\section{Proof of the main theorem and strictness}\label{sec:main-proof}

We now combine the two-dimensional comparison with the preservation
theorem. Throughout the first proof below, $1<p<2$ and
$\alpha=1/p$.

\begin{proof}[Proof of Theorem~\ref{thm:convex-order-main} for $1<p<2$]
The case $n=1$ is immediate. By unconditionality of $B_p^n$, signs of
the coefficients can be absorbed into the Rademacher variables in the
sign--Dirichlet representation, so we may work with nonnegative
coefficients. It is enough to prove the claim for one elementary
$T$-transform, since finite compositions of such transforms generate
majorization \cite[Ch.~2]{MarshallOlkinArnold} and convex order is
transitive.

Permute coordinates so the transform acts on coordinates $1,2$, while
the remaining coordinates are common. Proposition~\ref{prop:app-base2}
gives the squared convex order for the two active coordinates alone.
Append the common remaining coordinates one at a time. If the current
dimension is $d\ge2$, Lemma~\ref{lem:app-append-distribution} writes the
append operation as
\[
 Y\longmapsto(1-T)^\alpha Y+cT^\alpha\eps,
 \qquad
 T\sim\operatorname{Beta}(\alpha,1+d\alpha).
\]
Since $1+d\alpha\ge1+2\alpha$,
Theorem~\ref{thm:app-append} preserves squared convex order. After all
common coordinates have been appended, the elementary transform is
proved in the original dimension. Iterating over the transforms proves
\eqref{eq:convex-order-main}.
\end{proof}

\begin{proof}[Proof of Theorem~\ref{thm:convex-order-main} for $p=1$]
Let $p_j\downarrow1$, and let $X_{p_j}$ be uniform on $B_{p_j}^n$.
The bodies $B_{p_j}^n$ converge to $B_1^n$ in measure and their volumes
converge, hence the uniform measures converge weakly to the uniform
measure on $B_1^n$. All projections involved are supported in a common
compact interval. Therefore, for every continuous convex function
$\Phi$ on that interval,
\[
 \E\Phi(\langle b,X_{p_j}\rangle^2)
 \le
 \E\Phi(\langle a,X_{p_j}\rangle^2)
\]
passes to the limit. This proves \eqref{eq:convex-order-main} for
$p=1$.
\end{proof}

\begin{proof}[Proof of Theorem~\ref{thm:convex-order-main} for $p=2$]
Majorization of $\sqvec a$ over $\sqvec b$ implies
$\|a\|_2=\|b\|_2$. The uniform measure on $B_2^n$ is invariant under
the orthogonal group, so $\langle a,X\rangle$ and
$\langle b,X\rangle$ have the same law whenever the coefficient
vectors have the same Euclidean norm. Their squares are therefore equal
in distribution, which proves \eqref{eq:convex-order-main} as equality.
\end{proof}

The following calculation is used to obtain strictness at the endpoint
$p=1$.

\begin{lemma}[Fourth moments for the cross-polytope]\label{lem:p1-fourth-moments}
Let $X$ be uniform on $B_1^n$ and let $i\neq j$. Then
\begin{align}\label{eq:p1-m4-m22}
 \E X_i^4
 &=\frac{24}{(n+1)(n+2)(n+3)(n+4)},\notag\\
 \E X_i^2X_j^2
 &=\frac{4}{(n+1)(n+2)(n+3)(n+4)}.
\end{align}
Consequently, for every $\theta\in\R^n$,
\begin{equation}\label{eq:p1-fourth-moment-projection}
 \E\langle\theta,X\rangle^4
 =
 3\,\E X_1^2X_2^2\,\|\theta\|_2^4
 +\bigl(\E X_1^4-3\,\E X_1^2X_2^2\bigr)
 \sum_{i=1}^n\theta_i^4,
\end{equation}
and the coefficient $\E X_1^4-3\,\E X_1^2X_2^2$ is strictly positive.
\end{lemma}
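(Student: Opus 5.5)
We compute the mixed moments of a uniform point in $B_1^n$ through the sign--Dirichlet representation at $p=1$, and then expand the fourth power of the projection.

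\emph{Representation at $p=1$.} The argument of Lemma~\ref{lem:app-dirichlet} works verbatim for $p=1$, with $\alpha=1$. It gives $X_i=\eps_iT_i$ with $(T_0,\ldots,T_n)\sim\operatorname{Dirichlet}(1,\ldots,1)$, the uniform law on the $n$-simplex. The total parameter is $n+1$.

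\emph{Dirichlet moments.} For $\operatorname{Dirichlet}(\alpha_0,\ldots,\alpha_n)$ with total mass $\alpha_\Sigma$, the standard formula is
\[
 \E\prod_i T_i^{k_i}=\frac{\Gamma(\alpha_\Sigma)}{\Gamma(\alpha_\Sigma+\sum k_i)}\prod_i\frac{\Gamma(\alpha_i+k_i)}{\Gamma(\alpha_i)}.
\]
With all $\alpha_i=1$, $\alpha_\Sigma=n+1$ and $\sum k_i=4$, this gives
\[
 \E T_i^4=\frac{4!\,n!}{(n+4)!},\qquad \E T_i^2T_j^2=\frac{2!\,2!\,n!}{(n+4)!}.
\]
Since $\eps_i^4=\eps_i^2=1$, these are exactly the two values in \eqref{eq:p1-m4-m22}.

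\emph{Expanding the fourth power.} Expand $\langle\theta,X\rangle^4=\sum\theta_i\theta_j\theta_k\theta_l X_iX_jX_kX_l$. Because the signs are independent of $T$ and symmetric, every monomial containing some index an odd number of times has zero mean. Only two kinds of terms survive:
\begin{itemize}
\item the terms $\theta_i^4 X_i^4$;
\item the pairings $\theta_i^2\theta_j^2X_i^2X_j^2$ with $i\ne j$, each unordered pair appearing with multiplicity $\binom42=6$.
\end{itemize}
By permutation symmetry all diagonal moments equal $\E X_1^4$ and all off-diagonal ones equal $\E X_1^2X_2^2$. Hence
\[
 \E\langle\theta,X\rangle^4=\E X_1^4\sum_i\theta_i^4+3\,\E X_1^2X_2^2\sum_{i\ne j}\theta_i^2\theta_j^2 .
\]
Substituting $\sum_{i\ne j}\theta_i^2\theta_j^2=\|\theta\|_2^4-\sum_i\theta_i^4$ yields \eqref{eq:p1-fourth-moment-projection}.

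\emph{Positivity.} The coefficient is
\[
 \E X_1^4-3\,\E X_1^2X_2^2=\frac{24-12}{(n+1)(n+2)(n+3)(n+4)}>0 .
\]
When $n=1$ the cross term is vacuous, and the identity still holds trivially with the stated value of $\E X_1^2X_2^2$.

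There is no real obstacle. The only step needing care is the endpoint $p=1$ in the representation: the Jacobian factor $\alpha^n\prod t_i^{\alpha-1}$ reduces to $1$, so the uniform simplex law is immediate.
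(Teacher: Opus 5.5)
Your proposal is correct and follows essentially the same route as the paper: the sign--Dirichlet representation with $\operatorname{Dirichlet}(1,\ldots,1)$ at $p=1$, the standard Dirichlet mixed-moment formula, and expansion of the fourth power with sign symmetry removing every term in which some index appears an odd number of times. Writing the cross terms as $3\sum_{i\ne j}$ instead of the paper's $6\sum_{i<j}$ is equivalent, and your remark on the vacuous case $n=1$ is a harmless addition.
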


\begin{proof}
In the positive orthant of $B_1^n$, set $T_i=x_i$ and let $T_0=1-\sum_{i=1}^n T_i$. The Jacobian is one, and the simplex coordinates are uniformly distributed. Restoring the independent Rademacher signs gives
\[
 X_i=\eps_i T_i,
 \qquad
 (T_0,T_1,\ldots,T_n)\sim \operatorname{Dirichlet}(1,1,\ldots,1),
\]
with the signs independent of $T$. In particular, $X_i^2=T_i^2$ and $X_i^2X_j^2=T_i^2T_j^2$.
For a Dirichlet$(a_0,\ldots,a_n)$ vector, the mixed moments satisfy
\[
 \E\prod_{k=0}^n T_k^{m_k}
 =
 \frac{\Gamma(\sum_{\ell=0}^n a_\ell)}{\Gamma(\sum_{\ell=0}^n a_\ell+\sum_{\ell=0}^n m_\ell)}
 \prod_{k=0}^n\frac{\Gamma(a_k+m_k)}{\Gamma(a_k)}.
\]
Here all $a_k=1$ and $\sum_{\ell=0}^n a_\ell=n+1$. Taking first $m_i=4$ and all other $m_k=0$, and then taking $m_i=m_j=2$, gives the two identities in \eqref{eq:p1-m4-m22}.

Finally, expanding $\langle\theta,X\rangle^4$ and using sign symmetry to eliminate odd mixed moments yields
\[
 \E\langle\theta,X\rangle^4
 =\sum_{i=1}^n\theta_i^4 \E X_i^4
 +6\sum_{1\le i<j\le n}\theta_i^2\theta_j^2\,\E X_i^2X_j^2,
\]
which simplifies to \eqref{eq:p1-fourth-moment-projection} because
\[
 2\sum_{1\le i<j\le n}\theta_i^2\theta_j^2
 =\|\theta\|_2^4-\sum_{i=1}^n\theta_i^4.
\]
The coefficient equals $12/((n+1)(n+2)(n+3)(n+4))>0$.
\end{proof}

\begin{proposition}[Strictness for strictly convex tests]\label{prop:strict-convex-tests}
Let $1\le p<2$, let $X$ be uniform on $B_p^n$, and suppose
$\sqvec a\succ\sqvec b$, with $\sqvec a$ not a permutation of
$\sqvec b$. If $\Phi$ is strictly convex on an interval containing the
supports of the two squared projections, then
\[
 \E\Phi(\langle b,X\rangle^2)
 <
 \E\Phi(\langle a,X\rangle^2).
\]
\end{proposition}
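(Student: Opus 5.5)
The plan is to combine the convex order of Theorem~\ref{thm:convex-order-main} with a standard rigidity fact: if $U\cx V$ and $\E\Phi(U)=\E\Phi(V)$ for some strictly convex $\Phi$, then $U$ and $V$ have the same law. Write $U=\langle b,X\rangle^2$ and $V=\langle a,X\rangle^2$. The theorem already gives $\E\Phi(U)\le\E\Phi(V)$, so it suffices to show that $U$ and $V$ are not equal in law and then invoke rigidity.

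\emph{Step 1: rigidity.} By Strassen's theorem (or directly via a martingale coupling), $U\cx V$ lets us realize $V$ and $U$ on a common space with $\E[V\mid U]=U$. Conditional Jensen then gives $\E[\Phi(V)\mid U]\ge\Phi(U)$. Strict convexity of $\Phi$ on the relevant interval makes this inequality strict on the event where the conditional law of $V$ given $U$ is nondegenerate. So equality of the expectations forces $V=U$ almost surely, and in particular $U\stackrel{d}{=}V$. Both variables are bounded, since $X\in B_p^n\subset B_2^n$, so all expectations are finite.

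\emph{Step 2: the laws differ.} I would compare fourth moments, that is, second moments of $U$ and $V$. Using the sign symmetry of $X$, expand
\[
\E\langle\theta,X\rangle^4=\E X_1^4\sum_i\theta_i^4+6\,\E X_1^2X_2^2\sum_{i<j}\theta_i^2\theta_j^2 .
\]
Exchangeability lets one use the same constants for every coordinate and every pair, so this gives the formula \eqref{eq:p1-fourth-moment-projection} for general $p$, with $\|\theta\|_2$ fixed.

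The quantity $\sum_i\theta_i^4$ is strictly Schur-convex in $\sqvec\theta$. Hence $\sum a_i^4>\sum b_i^4$ whenever $\sqvec a\succ\sqvec b$ and $\sqvec a$ is not a permutation of $\sqvec b$. It therefore remains to show $\kappa_p:=\E X_1^4-3\,\E X_1^2X_2^2>0$ for $1\le p<2$. For $p=1$ this is Lemma~\ref{lem:p1-fourth-moments}.

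For $1<p<2$, use Lemma~\ref{lem:app-dirichlet}, which gives $X_1^4=T_1^{4\alpha}$ and $X_1^2X_2^2=T_1^{2\alpha}T_2^{2\alpha}$ with Dirichlet$(1,\alpha,\dots,\alpha)$ weights. The Dirichlet moment formula then yields
\[
\kappa_p=\frac{\Gamma(1+n\alpha)}{\Gamma(1+n\alpha+4\alpha)}\left[\frac{\Gamma(5\alpha)}{\Gamma(\alpha)}-3\Bigl(\frac{\Gamma(3\alpha)}{\Gamma(\alpha)}\Bigr)^2\right].
\]
So the claim reduces to the one-variable inequality $\Gamma(5\alpha)\Gamma(\alpha)>3\Gamma(3\alpha)^2$ for $\alpha\in(1/2,1]$.

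\emph{Main obstacle: the gamma inequality.} The bracket vanishes at $\alpha=1/2$, because $\Gamma(5/2)\Gamma(1/2)=3\pi/4=3\Gamma(3/2)^2$. This matches the Euclidean case, where every projection of the same norm has the same law. My first attempt would be to show that
\[
f(\alpha)=\log\Gamma(5\alpha)+\log\Gamma(\alpha)-2\log\Gamma(3\alpha)
\]
is strictly increasing. Its derivative is
\[
f'(\alpha)=5\psi(5\alpha)+\psi(\alpha)-6\psi(3\alpha),
\]
which is a second difference of the convex function $x\mapsto x\psi(x)$ taken at the points $\alpha<3\alpha<5\alpha$ with equal spacing, scaled by $1/\alpha$. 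It is therefore positive.

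With $f(1/2)=\log 3$, this gives $f(\alpha)>\log 3$ on $(1/2,1)$, hence $\kappa_p>0$. Consequently $\E V^2>\E U^2$, so $U\not\stackrel{d}{=}V$, and Step 1 gives the strict inequality for $\Phi$.
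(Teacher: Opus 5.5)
Your proof is correct. Step~1 is the same Strassen rigidity argument the paper uses; the routes differ in Step~2. The paper splits into cases. For $1<p<2$ it separates the laws by their supports: $\langle a,X\rangle$ has support $[-\|a\|_q,\|a\|_q]$ with $q=p/(p-1)>2$, and strict Karamata for $x\mapsto x^{q/2}$ gives $\|a\|_q>\|b\|_q$. Only at $p=1$ does it fall back on the fourth-moment identity of Lemma~\ref{lem:p1-fourth-moments}. It has to, because there the support is $[-\|a\|_\infty,\|a\|_\infty]$ and need not change: for example, $\sqvec a=(\tfrac12,\tfrac12,0)\succ\sqvec b=(\tfrac12,\tfrac14,\tfrac14)$ has $\|a\|_\infty=\|b\|_\infty$. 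You instead run the fourth-moment argument uniformly on $[1,2)$. This costs the inequality $\Gamma(5\alpha)\Gamma(\alpha)>3\Gamma(3\alpha)^2$. In return you get a single mechanism and an explicit quantitative gap $\E V^2-\E U^2=\kappa_p\bigl(\sum_i a_i^4-\sum_i b_i^4\bigr)$. You also get a clear explanation of the degeneracy at $p=2$, where $\kappa_2=0$. The paper's support argument for $1<p<2$ avoids special functions. It also shows directly that hinge tests $(u-t)_+$ with $\|b\|_q^2<t<\|a\|_q^2$ already separate the laws. The one step you assert without justification is the strict convexity of $x\mapsto x\psi(x)$. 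It is true and follows from $\bigl(x\psi(x)\bigr)''=2\psi'(x)+x\psi''(x)=2\sum_{k\ge1}k/(x+k)^3>0$. With that in hand, your identity $\alpha f'(\alpha)=g(5\alpha)-2g(3\alpha)+g(\alpha)$ for $g(x)=x\psi(x)$ gives $f'>0$, and the rest goes through.
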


\begin{proof}
Theorem~\ref{thm:convex-order-main} gives the non-strict inequality.
By Strassen's martingale characterization of convex order, equality for
one strictly convex test function would force the two random variables
to have the same law; see \cite{Strassen} and, for a modern treatment,
\cite[Ch.~3.A]{ShakedShanthikumar}. It therefore remains to show that
the squared-projection laws are different.

Assume first that $1<p<2$, and let $q=p/(p-1)>2$. The support of
$\langle a,X\rangle$ is
\[
 [-\|a\|_q,\|a\|_q].
\]
Since $\sqvec a\succ\sqvec b$ and the vectors are not permutations,
strict Karamata inequality for $x\mapsto x^{q/2}$ gives
\[
 \sum_{i=1}^n |a_i|^q
 >
 \sum_{i=1}^n |b_i|^q.
\]
Thus the two projection laws have different supports.

For $p=1$, Lemma~\ref{lem:p1-fourth-moments} shows that, among
equal-norm coefficient vectors,
$\E\langle a,X\rangle^4$ is an affine function of
$\sum_i a_i^4$ with strictly positive slope. Strict Karamata inequality
for $x\mapsto x^2$ gives
$\sum_i a_i^4>\sum_i b_i^4$, so the fourth moments, and hence the laws,
are different.
\end{proof}

\begin{remark}[Degeneracy at the Euclidean endpoint]\label{rem:p-endpoints}
At $p=2$, every comparison in Theorem~\ref{thm:convex-order-main} is
an equality. Consequently, Proposition~\ref{prop:strict-convex-tests}
and all later strict directional statements are specific to $p<2$.
For $p>2$, the coordinate versus two-coordinate example in the
introduction shows that the same convex-order direction is false.
\end{remark}

\section{Extensions and integral consequences}\label{sec:extensions}

\subsection{Scale mixtures and radially decreasing measures}

\begin{proposition}[Scale-mixture stability]\label{prop:scale-mixture}
Let $n\ge1$ and $1\le p\le2$, let $X$ be uniform on $B_p^n$, and let
$R\ge0$ be
independent of $X$ with $\E R^2<\infty$. If $\sqvec a\succ\sqvec b$,
then
\[
 \langle b,RX\rangle^2\cx\langle a,RX\rangle^2.
\]
\end{proposition}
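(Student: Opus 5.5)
We prove Proposition~\ref{prop:scale-mixture} by conditioning on $R$ and using the fact that convex order is closed under mixtures of a common kind. Since $R$ is independent of $X$, for each fixed $r\ge0$ we have
\[
 \langle b,rX\rangle^2=r^2\langle b,X\rangle^2,\qquad \langle a,rX\rangle^2=r^2\langle a,X\rangle^2 .
\]
Theorem~\ref{thm:convex-order-main} gives $\langle b,X\rangle^2\cx\langle a,X\rangle^2$. Multiplying both variables by the same nonnegative constant preserves convex order, because $q\mapsto\Phi(r^2q)$ is convex whenever $\Phi$ is. So conditionally on $R=r$, the two squared projections are in convex order.

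To integrate over $r$ we use the hinge criterion of Lemma~\ref{lem:app-stoploss}. The variables are integrable: $\langle a,X\rangle^2\le\|a\|_2^2\sup_{x\in B_p^n}\|x\|_2^2\le\|a\|_2^2$ because $p\le2$. Independence then gives
\[
 \E\langle a,RX\rangle^2=\E R^2\,\E\langle a,X\rangle^2<\infty .
\]
The same factorization holds for $b$, so the means agree by the mean equality in Theorem~\ref{thm:convex-order-main}. For $t\ge0$, Fubini and independence give
\[
 \E\bigl(t-R^2\langle b,X\rangle^2\bigr)_+=\int \E\bigl(t-r^2\langle b,X\rangle^2\bigr)_+\,\dd\mathbb P_R(r).
\]
The integrand is pointwise dominated by the corresponding one for $a$, by the conditional step. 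Integrating this pointwise inequality and applying Lemma~\ref{lem:app-stoploss} gives the claim.

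For the consequence in Corollary~\ref{cor:radial-intro}, we write a density $\rho(\|x\|_p)$ with $\rho$ nonincreasing as a mixture of uniform measures on dilates $rB_p^n$. By the layer-cake formula,
\[
 \rho(\|x\|_p)=\int_0^\infty \1_{\{\|x\|_p\le r\}}\,\dd\mu(r)
\]
for the nonnegative measure $\mu=-\dd\rho$, after normalizing right-continuity and allowing a possible limit at infinity, which must vanish for integrability. Hence the law is that of $RX$, where $R$ has law proportional to $r^n\vol(B_p^n)\,\dd\mu(r)$. A finite second moment of the measure is exactly $\E R^2\,\E\|X\|_2^2<\infty$, which gives $\E R^2<\infty$ because $\E\|X\|_2^2>0$. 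The first part then applies.

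No step is a real obstacle. The only care needed is in the bookkeeping of the layer-cake representation: handling atoms and the choice of version of $\rho$, and checking that the mixing law is a probability measure.
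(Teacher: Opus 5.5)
Your proof is correct and follows the paper's argument. You condition on $R=r$, use that $u\mapsto\Phi(r^2u)$ is convex so that Theorem~\ref{thm:convex-order-main} applies for each fixed $r$, integrate against the law of $R$, and get equal means from $\|a\|_2=\|b\|_2$ and $\E R^2<\infty$. The only difference is that you test with the bounded lower hinges of Lemma~\ref{lem:app-stoploss}, which makes the Fubini interchange automatic, whereas the paper integrates a general convex $\Phi$ directly.
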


\begin{proof}
Let $\Phi$ be convex and assume the relevant expectations are finite.
For every fixed $r\ge0$, the function $u\mapsto\Phi(r^2u)$ is convex.
Theorem~\ref{thm:convex-order-main} therefore gives
\[
 \E_X\Phi\!\left(r^2\langle b,X\rangle^2\right)
 \le
 \E_X\Phi\!\left(r^2\langle a,X\rangle^2\right).
\]
Integrating this inequality with respect to the law of $R$ proves the
claim. Equality of the means follows from
$\|a\|_2=\|b\|_2$ and $\E R^2<\infty$.
\end{proof}

\begin{corollary}[Radially decreasing densities]\label{cor:radial-density}
Let $\rho:[0,\infty)\to[0,\infty)$ be nonincreasing and
right-continuous, and assume that
\[
 n|B_p^n|\int_0^\infty r^{n-1}\rho(r)\,\dd r=1
\]
and that the probability measure
\[
 \mu(\dd x)=\rho(\|x\|_p)\,\dd x
\]
has finite second moment. If $1\le p\le2$ and $\sqvec a\succ\sqvec b$,
then, for $Y\sim\mu$,
\[
 \langle b,Y\rangle^2\cx\langle a,Y\rangle^2.
\]
\end{corollary}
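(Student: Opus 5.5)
The plan is to realize $Y\sim\mu$ as a scale mixture $Y\overset{d}{=}RX$ with $X$ uniform on $B_p^n$ and $R\ge0$ independent of $X$. Once that is done, the corollary follows directly from Proposition~\ref{prop:scale-mixture}.

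\emph{Layer-cake decomposition of $\rho$.} Since $\rho$ is nonincreasing and right-continuous, there is a unique positive Borel measure $\nu$ on $(0,\infty]$ such that
\[
 \rho(r)=\nu\bigl((r,\infty]\bigr) \qquad\text{for } r\ge0.
\]
It is the Lebesgue--Stieltjes measure of $-\rho$, together with a possible mass at $\infty$ equal to $\lim_{r\to\infty}\rho(r)$. Integrability of $\rho(\|x\|_p)$ forces this limit to be zero, so $\nu$ lives on $(0,\infty)$. Right-continuity is exactly what makes the identity hold pointwise with open half-lines. Fubini then gives the mixture representation
\[
 \rho(\|x\|_p)=\int_{(0,\infty)}\1_{\{\|x\|_p<s\}}\,\nu(\dd s)
 =\int_{(0,\infty)} |sB_p^n|\,\frac{\1_{sB_p^n}(x)}{|sB_p^n|}\,\nu(\dd s),
\]
up to the boundary $\{\|x\|_p=s\}$, which is Lebesgue-null. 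So $\mu$ is a mixture of uniform laws on the dilates $sB_p^n$, with mixing measure $\pi(\dd s)=|B_p^n|\,s^n\,\nu(\dd s)$.

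\emph{$\pi$ is a probability measure.} Integrating the mixture over $\R^n$ shows that its total mass is $\int\rho(\|x\|_p)\,\dd x$. By the polar formula this equals $n|B_p^n|\int_0^\infty r^{n-1}\rho(r)\,\dd r=1$, which is the normalization hypothesis.

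\emph{Identification of the law.} Take $R\sim\pi$ independent of $X$. Then $RX$ has exactly the law $\mu$, since conditionally on $R=s$ it is uniform on $sB_p^n$.

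\emph{Second moment of $R$.} This is the step that needs care. Compute
\[
 \E\|Y\|_2^2=\E R^2\,\E\|X\|_2^2 .
\]
The factor $\E\|X\|_2^2$ is finite and strictly positive, and $\E\|Y\|_2^2<\infty$ by hypothesis. Hence $\E R^2<\infty$.

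\emph{Conclusion.} Proposition~\ref{prop:scale-mixture} now applies and gives $\langle b,Y\rangle^2\cx\langle a,Y\rangle^2$.

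The only genuine obstacle is the measure-theoretic justification of the layer-cake representation. Right-continuity and monotonicity are what make $\nu$ well defined and give the pointwise identity. Showing that $\nu$ has no mass at infinity uses the integrability of $\rho(\|x\|_p)$.
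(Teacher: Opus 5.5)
Your proposal is correct and follows essentially the same route as the paper. Both write $\rho$ via the Stieltjes measure $\nu$ of $-\rho$, represent $\mu$ as a mixture of uniform laws on dilates $rB_p^n$ with mixing law $\pi(\dd r)=|B_p^n|r^n\nu(\dd r)$, deduce $\E R^2<\infty$ from $\E\|Y\|_2^2=\E R^2\,\E\|X\|_2^2$, and invoke Proposition~\ref{prop:scale-mixture}. Your explicit check that $\nu$ has no mass at infinity and your Tonelli verification of $\pi((0,\infty))=1$ simply spell out what the paper compresses into its integration-by-parts step.
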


\begin{proof}
Let $\nu$ be the positive Stieltjes measure determined by
\[
 \nu((s,t])=\rho(s)-\rho(t),
 \qquad 0\le s<t<\infty,
\]
and define
\[
 \pi(\dd r)=|B_p^n|\,r^n\,\nu(\dd r).
\]
Integration by parts gives
\[
 \pi((0,\infty))
 =
 n|B_p^n|\int_0^\infty r^{n-1}\rho(r)\,\dd r
 =1,
\]
where the boundary terms vanish by monotonicity and integrability.
At every continuity point $s$ of $\rho$ (and therefore for almost
every $s\ge0$),
\[
 \rho(s)
 =
 \int_{(s,\infty)}
 \frac{1}{|B_p^n|r^n}\,\pi(\dd r).
\]
Consequently,
\[
 \mu
 =
 \int_0^\infty \operatorname{Unif}(rB_p^n)\,\pi(\dd r).
\]
Equivalently, $Y$ has the same law as $RX$, where $X$ is uniform on
$B_p^n$ and $R$ has law $\pi$, independently of $X$. Moreover,
\[
 \E\|Y\|_2^2=\E R^2\,\E\|X\|_2^2.
\]
The finite second-moment assumption therefore gives $\E R^2<\infty$,
and Proposition~\ref{prop:scale-mixture} applies.
\end{proof}

\begin{remark}
The corollary includes, for example, densities proportional to
$e^{-V(\|x\|_p)}$ whenever $V$ is nondecreasing and the resulting law
has finite second moment. It also includes compactly supported
nonincreasing radial profiles on $B_p^n$.
\end{remark}

\subsection{Moments, hinge functions, and Laplace transforms}

\begin{corollary}[Integral comparisons]\label{cor:moments-laplace}
Let $1\le p\le2$, let $X$ be uniform on $B_p^n$, and suppose
$\sqvec a\succ\sqvec b$. Then
\begin{align}
 \E|\langle b,X\rangle|^r
 &\le
 \E|\langle a,X\rangle|^r,
 && r\in[2,\infty),\label{eq:moment-convex}\\
 \E|\langle b,X\rangle|^r
 &\ge
 \E|\langle a,X\rangle|^r,
 && 0<r<2,\label{eq:moment-concave}\\
 \E e^{-\lambda\langle b,X\rangle^2}
 &\le
 \E e^{-\lambda\langle a,X\rangle^2},
 && \lambda>0.\label{eq:laplace-order}
\end{align}
If $a\ne0$ (and hence $b\ne0$), then also
\begin{align}
 \E|\langle b,X\rangle|^r
 &\le
 \E|\langle a,X\rangle|^r,
 && -1<r<0,\label{eq:negative-moment}\\
 \E\log|\langle b,X\rangle|
 &\ge
 \E\log|\langle a,X\rangle|.\label{eq:log-moment}
\end{align}
In addition, for every $t\ge0$,
\[
 \E\bigl(\langle b,X\rangle^2-t\bigr)_+
 \le
 \E\bigl(\langle a,X\rangle^2-t\bigr)_+.
\]
If $1\le p<2$ and $\sqvec a$ is not a permutation of
$\sqvec b$, then \eqref{eq:laplace-order} is strict for every
$\lambda>0$. If $p=2$, all the displayed comparisons are equalities.
\end{corollary}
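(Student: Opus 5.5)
Every item will be read off from Theorem~\ref{thm:convex-order-main} by choosing a suitable convex or concave test function of $U=\langle a,X\rangle^2$ and $V=\langle b,X\rangle^2$, where $V\cx U$. The means agree because $\|a\|_2=\|b\|_2$, and all variables are bounded since $X$ lies in a compact set, so integrability is never an issue for nonnegative powers.

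\emph{Power moments.} For $r\ge2$, the function $q\mapsto q^{r/2}$ is convex on $[0,\infty)$, and this gives \eqref{eq:moment-convex}. For $0<r<2$, the function $q\mapsto q^{r/2}$ is concave, so its negative is convex, and this gives \eqref{eq:moment-concave}.

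\emph{Laplace transform and hinge functions.} The function $q\mapsto e^{-\lambda q}$ is convex, which yields \eqref{eq:laplace-order}. The upper hinge $q\mapsto(q-t)_+$ is convex, which yields the stop-loss inequality directly.

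\emph{Negative moments and the logarithm.} For $-1<r<0$, the function $q\mapsto q^{r/2}$ is convex on $(0,\infty)$. The only care needed is integrability and the possible atom at $0$. When $a\ne0$, the law of $\langle a,X\rangle$ has a bounded density, since it is a marginal of a log-concave (uniform convex body) law with nondegenerate direction. Hence $\E|\langle a,X\rangle|^r<\infty$ for $r>-1$, and $U>0$ almost surely.

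To apply convex order rigorously, I would truncate. Let $\Phi_\eps(q)$ be the convex function equal to $q^{r/2}$ for $q\ge\eps$ and extended by its tangent line at $\eps$ for $q<\eps$. This extension is finite and convex on $[0,\infty)$, so Theorem~\ref{thm:convex-order-main} gives $\E\Phi_\eps(V)\le\E\Phi_\eps(U)$. Now let $\eps\downarrow0$. Since $\Phi_\eps\uparrow q^{r/2}$ monotonically, monotone convergence applies on both sides, and the limits are finite by the density bound.

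The logarithm is handled the same way. The function $-\log q$ is convex, and its tangent truncation increases to $-\log q$. The positive part $\log^+$ is bounded, and the negative part is integrable thanks to the bounded density. This gives \eqref{eq:log-moment}.

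I expect the negative-moment and logarithm step to be the main obstacle, precisely because of this integrability near $0$. The bounded density argument is the point I would verify most carefully. For $1<p\le2$ the body is convex and the claim is standard. For $p=1$ the body $B_1^n$ is still convex, so the same argument applies.

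\emph{Strictness and the case $p=2$.} The function $e^{-\lambda q}$ is strictly convex. So for $1\le p<2$, Proposition~\ref{prop:strict-convex-tests} gives strict inequality in \eqref{eq:laplace-order} whenever $\sqvec a$ is not a permutation of $\sqvec b$. At $p=2$ the two squared projections are equal in law, as shown in the proof of the $p=2$ case, so every displayed comparison is an equality.
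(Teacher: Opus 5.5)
Your proposal is correct and follows the paper's proof. Every item comes from applying Theorem~\ref{thm:convex-order-main} to the appropriate convex or concave test function of the squared projection; the negative-moment and logarithmic cases are handled by monotone approximation together with local boundedness of the log-concave marginal density near the origin; strictness comes from Proposition~\ref{prop:strict-convex-tests}. The only cosmetic difference is that you approximate by tangent-line truncations at $\varepsilon$, whereas the paper uses the shifted functions $(u+\varepsilon)^{r/2}$ and $\log(u+\varepsilon)$; both are monotone in $\varepsilon$ and serve equally well.
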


\begin{proof}
Apply Theorem~\ref{thm:convex-order-main} to the functions
$u\mapsto u^{r/2}$. This function is convex for
$r\in(-\infty,0)\cup[2,\infty)$ and concave for $0<r<2$. For
$-1<r<0$, use the convex approximants
$u\mapsto(u+\varepsilon)^{r/2}$ and let $\varepsilon\downarrow0$.
When $a\ne0$, the two projections are nondegenerate. Their laws have
log-concave densities on the interiors of their supports, hence densities
that are locally bounded near the origin; therefore the negative moments
are finite for $r>-1$. The logarithmic comparison
follows from concavity of $\log(u+\varepsilon)$, followed by truncation
and the local integrability of the logarithm against those marginal
densities. The hinge inequality is the standard characterization of
convex order, and \eqref{eq:laplace-order} follows because
$u\mapsto e^{-\lambda u}$ is convex. For $1\le p<2$, strictness follows from
Proposition~\ref{prop:strict-convex-tests}; for $p=2$, equality in law
was proved in Theorem~\ref{thm:convex-order-main}.
\end{proof}

\section{Gaussian convolution and central sections}\label{sec:applications}

Let $\nu_{p,n}$ denote normalized Lebesgue measure on $B_p^n$, and let
$\gamma_t$ be the centered Gaussian measure with covariance $tI_n$.
For $t\ge0$, put
\[
 \mu_{p,n,t}=\nu_{p,n}*\gamma_t.
\]
If $Y\sim\mu_{p,n,t}$ and $h_{p,n,t,\theta}$ denotes the density of
$\langle Y,\theta\rangle$ for $\theta\in\sphere$, define
\[
 A_{p,n,t}(\theta)=h_{p,n,t,\theta}(0),
 \qquad
 \widetilde A_{p,n,t}(\theta)
 =
 \sqrt{\Var\langle Y,\theta\rangle}\,
 h_{p,n,t,\theta}(0).
\]
The standardization is independent of the direction on the Euclidean
sphere.

The volume of $B_p^n$ is
\begin{equation}\label{eq:volume-bp}
 |B_p^n|=\frac{(2\Gamma(1+1/p))^n}{\Gamma(1+n/p)}.
\end{equation}

\begin{lemma}[Coordinate moments]\label{lem:moments}
Let $p>0$, $n\ge1$, $X\sim\nu_{p,n}$, and let $r>-1$. Then
\begin{equation}\label{eq:coordinate-moment}
 \E |X_1|^r
 =\frac{\Gamma(1+n/p)\Gamma((r+1)/p)}
 {\Gamma(1/p)\Gamma(1+(n+r)/p)}.
\end{equation}
In particular,
\begin{equation}\label{eq:variance-vpn}
 v_{p,n}:=\E X_1^2
 =\frac{\Gamma(3/p)\Gamma(1+n/p)}
 {\Gamma(1/p)\Gamma(1+(n+2)/p)}.
\end{equation}
Therefore
\[
 \Var \langle X,\theta\rangle=v_{p,n}\lVert\theta\rVert_2^2.
\]
\end{lemma}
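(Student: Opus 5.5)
The coordinate moment formula \eqref{eq:coordinate-moment} comes from the polar integration formula for $\ell_p$-balls, so that the volume formula \eqref{eq:volume-bp} is the only input. The plan is to compute $\E|X_1|^r$ as a ratio of integrals over $B_p^n$, use Fubini in the first coordinate, and reduce the inner integral to the volume of a scaled lower-dimensional ball. The variance identity then follows from unconditionality.

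The first step writes
\[
 \E|X_1|^r=\frac{1}{|B_p^n|}\int_{-1}^{1}|x_1|^r\,\bigl|(1-|x_1|^p)^{1/p}B_p^{n-1}\bigr|\,\dd x_1
 =\frac{2|B_p^{n-1}|}{|B_p^n|}\int_0^1 s^r(1-s^p)^{(n-1)/p}\,\dd s .
\]
This holds because the slice of $B_p^n$ at height $x_1$ is $(1-|x_1|^p)^{1/p}B_p^{n-1}$. For $n=1$ the factor $|B_p^0|$ is interpreted as $1$.

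The second step substitutes $t=s^p$. The integral becomes $\frac1p\,\mathrm{B}\bigl((r+1)/p,\,1+(n-1)/p\bigr)$, which is finite exactly when $r>-1$. Writing the Beta function in Gamma form gives
\[
 \E|X_1|^r=\frac{2|B_p^{n-1}|}{p\,|B_p^n|}\cdot\frac{\Gamma((r+1)/p)\,\Gamma(1+(n-1)/p)}{\Gamma(1+(n+r)/p)} .
\]
Then insert \eqref{eq:volume-bp}: the ratio $|B_p^{n-1}|/|B_p^n|$ equals $\Gamma(1+n/p)\big/\bigl(2\Gamma(1+1/p)\,\Gamma(1+(n-1)/p)\bigr)$. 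The factors $\Gamma(1+(n-1)/p)$ cancel, and $p\,\Gamma(1+1/p)=\Gamma(1/p)$ gives \eqref{eq:coordinate-moment}. Setting $r=2$ gives \eqref{eq:variance-vpn}.

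The last step is the variance. The law of $X$ is invariant under coordinate sign changes, so $\E X_i=0$ and $\E X_iX_j=0$ for $i\ne j$. It is also permutation invariant, so $\E X_i^2=v_{p,n}$ for every $i$. Hence the covariance matrix is $v_{p,n}I_n$, and $\Var\langle X,\theta\rangle=v_{p,n}\|\theta\|_2^2$.

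There is no real obstacle here. The only care needed is the Gamma bookkeeping in the volume ratio and the convergence condition $r>-1$ at $s=0$.
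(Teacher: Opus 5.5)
Your proposal is correct and follows the paper's proof: the paper likewise slices $B_p^n$ to obtain the marginal density $\frac{|B_p^{n-1}|}{|B_p^n|}(1-|u|^p)_+^{(n-1)/p}$, substitutes $z=u^p$ to reach a Beta integral, normalizes with the volume formula, and gets the covariance $v_{p,n}I_n$ from sign and permutation symmetry. Your Gamma bookkeeping also checks out, including the cancellation of $\Gamma(1+(n-1)/p)$, the identity $p\,\Gamma(1+1/p)=\Gamma(1/p)$, and the convention $|B_p^0|=1$; it fills in details the paper leaves implicit.
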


\begin{proof}
The density of $X_1$ is proportional to $(1-|u|^p)_+^{(n-1)/p}$. Normalizing by \eqref{eq:volume-bp} gives
\[
 f_{p,n}(u)=\frac{|B_p^{n-1}|}{|B_p^n|}(1-|u|^p)_+^{(n-1)/p}.
\]
The substitution $z=u^p$ gives \eqref{eq:coordinate-moment}. The covariance is diagonal by sign symmetry and has equal diagonal entries by permutation symmetry.
\end{proof}

For $t>0$ define
\[
 M_{p,n,t}(\theta)=\E\exp\left(-\frac{\langle X,\theta\rangle^2}{2t}\right),
 \qquad X\sim\nu_{p,n}.
\]
Equivalently, for $\lambda>0$,
\[
 L_{p,n,\lambda}(\theta)=\E e^{-\lambda\langle X,\theta\rangle^2},
\]
so that $M_{p,n,t}(\theta)=L_{p,n,1/(2t)}(\theta)$.

\begin{proposition}[Smoothing identity]\label{prop:smoothing}
Let $1\le p<\infty$, $n\ge1$, $t>0$, and $\theta\in\sphere$. Then
\begin{equation}\label{eq:smoothing-identity}
 \widetilde A_{p,n,t}(\theta)
 =\frac1{\sqrt{2\pi}}\sqrt{1+\frac{v_{p,n}}{t}}\,
 \E\exp\left(-\frac{\langle X,\theta\rangle^2}{2t}\right),
 \qquad X\sim\nu_{p,n}.
\end{equation}
Consequently, for fixed $p,n,t$, optimizing $\widetilde A_{p,n,t}$ over directions is equivalent to optimizing $M_{p,n,t}$.
\end{proposition}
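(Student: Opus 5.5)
The plan is to compute the density of $\langle Y,\theta\rangle$ at $0$ directly, by conditioning on $X$. Write $Y=X+\sqrt t\,Z$ with $X\sim\nu_{p,n}$ and $Z$ standard Gaussian in $\R^n$, independent of $X$. Since $\theta\in\sphere$, the variable $\langle Z,\theta\rangle$ is standard normal and independent of $X$. Hence, conditionally on $X$, the projection $\langle Y,\theta\rangle$ is $N(\langle X,\theta\rangle,t)$.

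Integrating the conditional density over the law of $X$ gives
\[
 h_{p,n,t,\theta}(s)=\E\,\frac{1}{\sqrt{2\pi t}}\exp\left(-\frac{(s-\langle X,\theta\rangle)^2}{2t}\right).
\]
This is a genuine continuous density. The integrand is bounded by $(2\pi t)^{-1/2}$ and is continuous in $s$, so dominated convergence applies and Fubini identifies it as the density. Evaluating at $s=0$ yields
\[
 A_{p,n,t}(\theta)=(2\pi t)^{-1/2}M_{p,n,t}(\theta).
\]

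For the variance, independence gives $\Var\langle Y,\theta\rangle=\Var\langle X,\theta\rangle+t$. By Lemma~\ref{lem:moments} with $\|\theta\|_2=1$, this equals $v_{p,n}+t$. Therefore
\[
 \widetilde A_{p,n,t}(\theta)=\sqrt{v_{p,n}+t}\,(2\pi t)^{-1/2}M_{p,n,t}(\theta)=\frac{1}{\sqrt{2\pi}}\sqrt{1+\frac{v_{p,n}}{t}}\,M_{p,n,t}(\theta),
\]
which is \eqref{eq:smoothing-identity}.

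The consequence follows because the prefactor $\frac1{\sqrt{2\pi}}\sqrt{1+v_{p,n}/t}$ is a positive constant independent of $\theta$. So $\widetilde A_{p,n,t}$ and $M_{p,n,t}$ have the same maximizers and minimizers on $\sphere$, and any comparison between two directions transfers between them. There is no real obstacle. The only points needing care are justifying the interchange of expectation and density evaluation, which the bounded Gaussian kernel handles, and using $\|\theta\|_2=1$ so that the Gaussian part of the projection has variance exactly $t$.
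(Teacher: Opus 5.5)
Your proposal is correct and follows essentially the same route as the paper. Like the paper, you write the projection as $\langle X,\theta\rangle+\sqrt t Z$, evaluate the averaged Gaussian kernel at $0$, and multiply by $\sqrt{v_{p,n}+t}$. You also add the dominated-convergence justification that the resulting density is the continuous version, which the paper leaves implicit.
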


\begin{proof}
Let $G$ be a standard Gaussian vector independent of $X\sim\nu_{p,n}$. Then the projection of $X+\sqrt tG$ in direction $\theta$ is
\[
 \langle X,\theta\rangle+\sqrt t Z,
\]
where $Z\sim N(0,1)$ is independent. Therefore its density at zero is
\[
 h_{p,n,t,\theta}(0)
 =\frac1{\sqrt{2\pi t}}
 \E\exp\left(-\frac{\langle X,\theta\rangle^2}{2t}\right).
\]
Multiplying by the standard deviation $\sqrt{v_{p,n}+t}$ gives \eqref{eq:smoothing-identity}.
\end{proof}

\begin{corollary}[Positive-time Schur-convexity]\label{cor:positive-time-schur}
Let $1\le p\le2$ and $t>0$. The map
\[
 \theta\longmapsto\widetilde A_{p,n,t}(\theta),
 \qquad \theta\in\sphere,
\]
is Schur-convex as a function of $\sqvec\theta$. More precisely, if
$\sqvec\theta\succ\sqvec\eta$, then
\[
 \widetilde A_{p,n,t}(\theta)
 \ge
 \widetilde A_{p,n,t}(\eta).
\]
For $1\le p<2$, the inequality is strict unless
$\sqvec\theta$ is a permutation of $\sqvec\eta$. For $p=2$, the
profile is constant in $\theta$, and every such comparison is an
equality. The same assertions hold for the unstandardized profile
$A_{p,n,t}$.
\end{corollary}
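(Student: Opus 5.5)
The plan is to reduce everything to the smoothing identity of Proposition~\ref{prop:smoothing} and the Laplace-transform comparison \eqref{eq:laplace-order} in Corollary~\ref{cor:moments-laplace}. Fix $t>0$ and put $\lambda=1/(2t)$. For $\theta\in\sphere$, \eqref{eq:smoothing-identity} gives
\[
 \widetilde A_{p,n,t}(\theta)=\frac1{\sqrt{2\pi}}\sqrt{1+\frac{v_{p,n}}{t}}\;L_{p,n,\lambda}(\theta).
\]
The prefactor depends only on $p,n,t$ and not on the direction. So Schur-convexity of $\widetilde A_{p,n,t}$ in $\sqvec\theta$ is equivalent to the same property of $\theta\mapsto\E e^{-\lambda\langle X,\theta\rangle^2}$.

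Now let $\sqvec\theta\succ\sqvec\eta$ with $\theta,\eta\in\sphere$; both have unit norm, so the coordinate sums agree. Applying \eqref{eq:laplace-order} with $a=\theta$ and $b=\eta$ gives
\[
 \E e^{-\lambda\langle\eta,X\rangle^2}\le\E e^{-\lambda\langle\theta,X\rangle^2},
\]
which is exactly $\widetilde A_{p,n,t}(\theta)\ge\widetilde A_{p,n,t}(\eta)$. Since this order-preservation under majorization holds for every pair, $\widetilde A_{p,n,t}$ is Schur-convex as a function of $\sqvec\theta$ on the simplex of squared unit vectors. Permutation invariance also follows, because $\sqvec\theta$ and any permutation of it majorize each other.

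For strictness when $1\le p<2$ and $\sqvec\theta$ is not a permutation of $\sqvec\eta$, I will invoke the strict part of Corollary~\ref{cor:moments-laplace}. That part rests on Proposition~\ref{prop:strict-convex-tests} applied to $u\mapsto e^{-\lambda u}$, which is strictly convex on $[0,\infty)$. Multiplying by the positive prefactor preserves strict inequality.

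For $p=2$, rotational invariance gives equality in law of $\langle\theta,X\rangle$ for all $\theta\in\sphere$, so the profile is constant.

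For the unstandardized profile, note from the proof of Proposition~\ref{prop:smoothing} that
\[
 A_{p,n,t}(\theta)=(2\pi t)^{-1/2}L_{p,n,\lambda}(\theta).
\]
Alternatively, $\Var\langle Y,\theta\rangle=v_{p,n}+t$ is independent of $\theta\in\sphere$ by Lemma~\ref{lem:moments}. Either way the two profiles differ by a direction-independent positive constant, and all the assertions transfer.

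No step is a genuine obstacle, since the corollary is a direct repackaging of earlier results. The only point needing care is that strictness requires strict convexity of the test function on an interval containing both supports, which $e^{-\lambda u}$ satisfies on all of $[0,\infty)$.
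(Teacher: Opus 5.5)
Your proposal is correct and matches the paper's proof: Proposition~\ref{prop:smoothing} reduces both profiles, up to direction-independent positive factors, to $\E e^{-\lambda\langle X,\theta\rangle^2}$ with $\lambda=1/(2t)$. Then \eqref{eq:laplace-order}, together with the strictness and $p=2$ equality clauses of Corollary~\ref{cor:moments-laplace}, gives every assertion, including the transfer to $A_{p,n,t}$.
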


\begin{proof}
Proposition~\ref{prop:smoothing} expresses the profile, up to a positive
factor independent of the direction, as
\[
 \E\exp\left(-\frac{\langle X,\theta\rangle^2}{2t}\right).
\]
Apply \eqref{eq:laplace-order} with $\lambda=1/(2t)$. The strictness
statement for $p<2$ and the equality statement for $p=2$ follow from
Corollary~\ref{cor:moments-laplace}.
\end{proof}

At $t=0$, the density at the origin is the normalized central-section
volume:
\begin{equation}\label{eq:section-density}
 A_{p,n,0}(\theta)
 =
 \frac{\vol_{n-1}(B_p^n\cap\theta^\perp)}{|B_p^n|}.
\end{equation}
The non-strict majorization inequality at $t=0$ follows directly from
Theorem~\ref{thm:convex-order-main}. Indeed, for each direction the
one-dimensional marginal density is continuous at the origin, and the
Gaussian kernels form an approximate identity. Hence
\[
 A_{p,n,t}(\theta)\longrightarrow A_{p,n,0}(\theta)
 \qquad\text{as }t\downarrow0.
\]
Taking the limit in Corollary~\ref{cor:positive-time-schur} proves
Schur-convexity of the central-section function. Thus the classical
non-strict section comparison is a singular consequence of the
convex-order theorem.

For $1\le p<2$, the exact equality cases at $t=0$ are supplied by
the classical section theorem of Meyer--Pajor and Koldobsky, in its
Schur-convex and stability forms due to Eskenazis and
Chasapis--Nayar--Tkocz
\cite{MeyerPajor,Koldobsky1998,KoldobskyBook,Eskenazis,ChasapisNayarTkocz};
see also the survey \cite{NayarTkoczSurvey}. At $p=2$, rotational
invariance makes the central-section function constant, so every
direction is an equality case.

\begin{corollary}[Coordinate and diagonal extremizers]\label{cor:all-time-extremizers}
Let $1\le p\le2$, $n\ge2$, and $t\ge0$. Put
\[
 \diag=\frac1{\sqrt n}(1,\ldots,1).
\]
Then, for every $\theta\in\sphere$,
\[
 \widetilde A_{p,n,t}(\diag)
 \le
 \widetilde A_{p,n,t}(\theta)
 \le
 \widetilde A_{p,n,t}(e_1).
\]
The same inequalities hold for $A_{p,n,t}$. If $1\le p<2$, equality
occurs only on the corresponding signed-permutation orbits, both for
$t>0$ and for $t=0$. If $p=2$, both inequalities are equalities for
every $\theta\in\sphere$ and every $t\ge0$.
\end{corollary}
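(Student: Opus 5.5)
The key observation is that every unit vector sits between the two extremal directions in the majorization order of squared coordinates, so the inequalities should follow from Corollary~\ref{cor:positive-time-schur} for $t>0$ and from its $t\downarrow0$ limit for $t=0$.

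For $\theta\in\sphere$ the vector $\sqvec\theta$ is a probability vector. Every probability vector $v\in\R^n$ satisfies
\[
 e_1\succ v\succ(1/n,\ldots,1/n).
\]
Indeed, the partial sums of $v^\downarrow$ are at most $1$, and they are at least $k/n$ because the average of the $k$ largest coordinates is at least the overall average. Since $\sqvec{e_1}=e_1$ and $\sqvec{\diag}=(1/n,\ldots,1/n)$, this gives $\sqvec{e_1}\succ\sqvec\theta\succ\sqvec{\diag}$.

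\textbf{Case $t>0$.} Apply Corollary~\ref{cor:positive-time-schur} twice, to the pairs $(e_1,\theta)$ and $(\theta,\diag)$. This yields both inequalities for $\widetilde A_{p,n,t}$. It also yields the same inequalities for $A_{p,n,t}$, because on the sphere the two profiles differ only by the direction-independent factor $\sqrt{v_{p,n}+t}$. For equality when $1\le p<2$, the strictness clause of that corollary forces $\sqvec\theta$ to be a permutation of $e_1$ or of $(1/n,\ldots,1/n)$. This means $\theta$ lies in the signed-permutation orbit of $e_1$ or of $\diag$, respectively. Conversely, on those orbits equality holds by unconditionality and permutation invariance of $\nu_{p,n}$.

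\textbf{Case $t=0$.} Use the approximate-identity argument recorded just before the statement: $A_{p,n,t}(\theta)\to A_{p,n,0}(\theta)$ as $t\downarrow0$, for each fixed direction. The non-strict inequalities at $t>0$ therefore pass to the limit. Because $\sqrt{v_{p,n}+t}\to\sqrt{v_{p,n}}$, the standardized version follows as well.

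The main obstacle is the equality case at $t=0$ for $p<2$. Strict inequalities are not preserved under limits, so the cases $t>0$ give no information here. I would not attempt to derive strictness from the convex-order theorem. Instead I would invoke the classical section theorems cited above: Meyer--Pajor and Koldobsky, in the Schur-convex and stability forms of Eskenazis and Chasapis--Nayar--Tkocz. These identify the coordinate and diagonal orbits as the only extremizers of the central-section function \eqref{eq:section-density} for $1\le p<2$.

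\textbf{Case $p=2$.} Rotational invariance makes $\mu_{2,n,t}$ invariant under the orthogonal group, so both profiles are constant on $\sphere$ for every $t\ge0$. All inequalities are therefore equalities, consistent with Remark~\ref{rem:p-endpoints}.
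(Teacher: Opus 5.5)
Your proposal is correct and follows essentially the same route as the paper. The common steps are the majorization sandwich $\sqvec{e_1}\succ\sqvec\theta\succ\sqvec{\diag}$, Corollary~\ref{cor:positive-time-schur} with its strictness clause for $t>0$, the approximate-identity limit for the non-strict inequalities at $t=0$ with equality cases taken from the cited classical section theorems, rotational invariance at $p=2$, and the direction-independent factor $\sqrt{v_{p,n}+t}$ relating $A_{p,n,t}$ and $\widetilde A_{p,n,t}$. You are right that strictness at $t=0$ cannot survive the limit, and the paper relies on the literature for that case for the same reason.
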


\begin{proof}
The vector $(1,0,\ldots,0)$ majorizes every point of the simplex, and
every point of the simplex majorizes $(1/n,\ldots,1/n)$. Apply Corollary~\ref{cor:positive-time-schur} when $t>0$ and the
preceding $t=0$ discussion when $t=0$. These results also give the
stated equality cases in the regimes $p<2$ and $p=2$. Since
$\Var\langle Y,\theta\rangle=v_{p,n}+t$ for every
$\theta\in\sphere$, standardization multiplies all directions by the
same positive factor.
\end{proof}

\section{Further remarks and open problems}\label{sec:discussion}

Theorem~\ref{thm:convex-order-main} identifies a distributional
monotonicity principle for projections of $B_p^n$ in the range
$1\le p\le2$. It is nontrivial for $p<2$ and reduces to equality at
the Euclidean endpoint. The section and heat-flow inequalities are applications of
this principle rather than separate inputs: all positive smoothing
times follow from the Laplace-transform test, while the non-strict
central-section inequality is recovered by an approximate-identity
limit. Corollary~\ref{cor:radial-density} shows that the mechanism also
applies to a broad class of $\ell_p$-radial measures.

Several questions remain. First, it would be useful to characterize
the radial profiles, beyond common scale mixtures, for which the same
convex order is valid. Second, a quantitative version should measure
the convex-order deficit in terms of a distance between
$\sqvec a$ and $\sqvec b$ under suitable uniform convexity assumptions
on the test function. Finally, the support obstruction for $p>2$
shows that the same convex order cannot describe the phase-transition
regime. A natural problem is to identify a weaker or
parameter-dependent stochastic comparison that captures the
directional changes occurring there.

\backmatter

\end{document}